\newenvironment{proof}
{\par\noindent\textbf{Proof}}{\eop\smallskip\vskip 3 pt}
\newcommand{\eop}{
	\hspace*{\fill}{$\vcenter{\hrule height1pt
	\hbox{\vrule width1pt height5pt
	\kern5pt \vrule width1pt} \hrule height1pt}$} }
\newcommand{\ComSp}{-1mm}
\newcommand{\ItemSp}{5mm}
\newcommand{\ItemLong}{10mm}
\newcommand{\figsize}{0.6\textwidth} 
\newcommand{\ifeq}{\text{\normalfont if }}
\newcommand{\ConfSp}[1]{\IfConf{\vspace{#1}}{ }}
\newcommand{\inter}{\text{\normalfont int}}
\newcommand{\chai}[1]{{\color{black}{#1}}}
\newcommand{\data}{(C, F, D, G)}
\newcommand{\Wc}{{\cal W}_c}
\newcommand{\Wd}{{\cal W}_d}
\newcommand{\w}{_w}
\newcommand{\Hw}{\HS\w}
\newcommand{\Cw}{C\w}
\newcommand{\Dw}{D\w}
\newcommand{\Fw}{F\w}
\newcommand{\Gw}{G\w}
\newcommand{\Lw}{L\w}
\newcommand{\hCw}{\widehat{C}\w}
\newcommand{\dataw}{(\Cw, \Fw, \Dw, \Gw)}
\newcommand{\uw}{_{u,w}} 
\newcommand{\Huw}{\HS\uw}
\newcommand{\hC}{\widehat{C}}
\newcommand{\wHS}{\widetilde{\HS}}
\newcommand{\wC}{\widetilde{C}}
\newcommand{\wF}{\widetilde{F}}
\newcommand{\wD}{\widetilde{D}}
\newcommand{\Ly}{V}
\newcommand{\level}{r}
\newcommand{\rU}{\level^*}
\newcommand{\Ls}{L_\Ly(\level)}
\newcommand{\Lss}{L_\Ly(\rU)}
\newcommand{\Ir}{{\cal I} (\level, \rU)}
\newcommand{\K}{{\cal M}_\level}
\newcommand{\Pwc}[1]{\Pi^w_c(#1)}
\newcommand{\Pwd}[1]{\Pi^w_d(#1)}
\newcommand{\wxc}{\Psi^w_c}
\newcommand{\wxd}{\Psi^w_d}
\newcommand{\z}{z}
\newcommand{\vin}{V_{in}}
\newcommand{\vdc}{V_{DC}}
\newcommand{\Ca}{C_a}
\newcommand{\vr}{v_r}
\newcommand{\NotConf}[1]{\ifthenelse{\boolean{Conference}}{}{#1}}
\newcommand{\IfConf}[2]{\ifthenelse{\boolean{Conference}}{#1}{#2}}
\begin{document}
\ititle{Forward Invariance of Sets for Hybrid Dynamical Systems}
\iauthor{
  Jun Chai\\
  {\normalsize jchai3@ucsc.edu}\\
  Ricardo Sanfelice\\
    {\normalsize ricardo@ucsc.edu}}
\idate{\today{}} 
\iyear{2017}
\irefnr{012}
\makeititle
\tableofcontents
\newpage

\title{\Large \bf Forward Invariance of Sets for \\ Hybrid Dynamical Systems (Part I)}
\author{Jun Chai and Ricardo G. Sanfelice}
\maketitle
	
\begin{abstract}
In this paper, tools to study forward invariance properties with robustness to disturbances, referred to as {\em robust forward invariance}, are proposed for hybrid dynamical systems modeled as hybrid inclusions.
Hybrid inclusions are given in terms of differential and difference inclusions with state and disturbance constraints, for whose definition only four objects are required.
The proposed robust forward invariance notions allow for the diverse type of solutions to such systems (with and without disturbances), including solutions that have persistent flows and jumps, that are Zeno, and that stop to exist after finite amount of (hybrid) time.
Sufficient conditions for sets to enjoy such properties are presented.
These conditions are given in terms of the objects defining the hybrid inclusions and the set to be rendered robust forward invariant.
In addition, as special cases, these conditions are exploited to state results on nominal forward invariance for hybrid systems without disturbances.
Furthermore, results that provide conditions to render the sublevel sets of Lyapunov-like functions forward invariant are established.
Analysis of a controlled inverter system is presented as an application of our results.
Academic examples are given throughout the paper to illustrate the main ideas.
\end{abstract}
	
\section{Introduction}
\label{sec:intro}
\subsection{Background and Motivation}
Forward invariance of sets for a dynamical system are key in numerous applications, including
air traffic management \cite{tomlin1998conflict},
obstacle avoidance in vehicular networks \cite{Qi.ea.15.CDC},
threat assessment in semi-autonomous cars \cite{Falcone.ea.11.TITS},
network control systems \cite{Pin.Parisini.09.NMPC}, and
building control \cite{Meyer.ea.16.Automatica}.
Techniques to verify such properties are vital in the design of many autonomous systems.
Such tools are even more valuable under the presence of disturbances.
Formally, a set $K$ is said to be forward invariant for a dynamical system if every solution to the system from $K$ stays in $K$.
This property is also referred to as flow-invariance \cite{fernandes1987remarks} or positively invariance \cite{blanchini1999set}.
When solutions are nonunique and invariance only holds for some solutions from each point in $K$, then $K$ is said to be weakly forward invariant -- the so-called {\em viability} in \cite{aubin.viability}.
In the presence of disturbances, one is typically interested in invariance properties that hold for all possible allowed disturbances, which has been referred to as {\em robust forward invariance}; see, e.g., \cite{Meyer.ea.16.Automatica}, \chai{\cite{li2016robust,sadraddini2016provably}}.

Tools to verify invariance of a set for continuous-time and discrete-time systems have been thoroughly investigated in the literature.
In the seminal article \cite{nagumo1942lage}, the so-called {\em Nagumo Theorem} is established to determine forward invariance (and weak forward invariance) of sets for continuous-time systems with unique solutions.
Given a locally compact set $K$ that is to be rendered forward invariant and a continuous-time system with a continuous vector field, the Nagumo Theorem requires that, at each point in the boundary of $K$, the vector field belongs to the tangent cone to $K$; see also \cite[Theorem 1.2.1]{aubin.viability}.
This result has been revisited and extended in several directions.
In \cite{yorke1967invariance}, conditions for weak invariance as well as invariance for closed sets are provided -- a result guaranteeing finite-time weak invariance is also presented.
In particular, one result shows that a closed set $K$ is forward invariant for a continuous-time system with unique solutions if and only if the vector field and its negative version are subtangential to $K$ at each point in it.
A similar result is known as the Bony-Brezis theorem, which, instead of involving a condition on the tangent vectors, requires the vector field to have a nonpositive inner product with any (exterior) normal vector to the set $K$ \cite{Bony.69.AlF, Brezis.70.CPAM}.
Taking advantage of convexity and linearity of the objects considered, \cite{bitsoris1988positive} provides necessary and sufficient conditions for forward invariance of convex polyhedral sets for linear time-invariant discrete-time systems.
Essentially, conditions in \cite{bitsoris1988positive} require that the new value of the state after every iteration belongs to the set that is to be rendered forward invariant.
This condition can be interpreted as the discrete-time counterpart of the condition in the Nagumo Theorem.
For the case of time-varying continuous-time systems, \cite{fernandes1987remarks} provides conditions guaranteeing forward invariance properties of $K$ given by a sublevel set of a Lyapunov-like function; see also \cite{Zanolin.87, chellaboina1999generalized, gorban2013lyapunov}.
The analysis of forward invariance of a set for systems under the effect of perturbations has also been studied in the literature; see \cite{Tarbouriech.Burgat.94.TAC} for the case when $K$ is a cone,  \cite{Castelan.Hennet.91.CDC, Milani.Dorea.96.Automatica} when $K$ is a polyhedral.
The survey article \cite{blanchini1999set} and the book \cite{aubin.viability} summarize these and other analysis results for forward invariance of sets in continuous-time and discrete-time systems.
Thought outside the scope of this paper, the notion of robust controlled forward invariance has also been studied in the literature, see, e.g., \cite{Meyer.ea.16.Automatica, li2016robust, sadraddini2016provably}.

The study of forward invariance in systems that combine continuous and discrete dynamics is not as mature as the continuous-time and discrete-time settings.
Certainly, when the continuous dynamics are discretized, the methods for purely discrete-time systems mentioned above are applicable or can be extended without significant effort for certain classes of hybrid models in discrete time; see, in particular, the results for a class of piecewise affine discrete-time systems in \cite{rakovic2004computation}.
Establishing forward invariance (both nominal and robust) is much more involved when the continuous dynamics are not discretized.
Forward invariance of sets for impulsive differential inclusions, which are a class of hybrid systems without disturbances, are established in \cite{aubin.hs.viability}.
In particular, \cite{aubin.hs.viability} proposes conditions to guarantee (weak -- or viability -- and strong) forward invariance of closed sets and a numerical algorithm to generate invariant kernels.
For hybrid systems modeled as hybrid automata, safety specification is often recast as a forward invariance problem.
In such context, several computational approaches have been proposed for hybrid automata with nonlinear continuous dynamics, disturbances, and control inputs.
In \cite{tomlin2003computational}, a differential game approach is proposed to compute reachable sets for the verification of safety in a class of hybrid automata.
In \cite{platzer2008computing}, an algorithm is proposed to approximate invariant sets of hybrid systems that have continuous dynamics with polynomial right-hand-side and that can be written as hybrid programs.

\vspace{-0.1in}

\subsection{Contributions}
\vspace{-0.02in}
Motivated by the lack of results for the study of robust and nominal forward invariance in hybrid systems, we propose tools for analyzing forward invariance properties of sets.
In particular, formal notions of invariance and solution-independent conditions that guarantee desired invariance properties of sets are established for hybrid dynamical systems modeled as
	\begin{align}\label{eq:Hw}
		\Hw \begin{cases}
			(x, w_c) \in \Cw & \dot{x} \ \ \in \Fw(x, w_c)\\
			(x, w_d) \in \Dw & x^+ \in \Gw(x, w_d)
		\end{cases}
	\end{align}
which we refer to as hybrid inclusions \cite{65} and where $x$ is the state and $w = (w_c, w_d)$ is the disturbance; see Section~\ref{sec:Hw} for a precise definition.
In the upcoming second part of this work, tools for the design of invariance-inducing controllers for hybrid system with disturbances are proposed based on the results presented in this paper.
The main challenges in asserting such forward invariance properties of a set $K$, subset of the state space, include the following:
\begin{enumerate}[leftmargin = \ItemSp]
\item {\it Combined continuous and discrete dynamics}:
given a disturbance signal and an initial state value, a solution to \eqref{eq:Hw} may evolve continuously for some time, while at certain instances, jump. As a consequence, the set $K$ must have the property that solutions stay in it when either the continuous or the discrete dynamics are active.
\item {\it Potential nonuniqueness and noncompleteness of solutions}: 
the fact that the dynamics of \eqref{eq:Hw} are set valued and the existence of states from where flows and jumps are both allowed (namely, the state components of $\Cw$ and $\Dw$ may have a nonempty overlap with points from where flows are possible) lead to nonunique solutions.
In particular, at points in $K$ where both flows and jumps are allowed, conditions for invariance during flows and at jumps need to be enforced simultaneously.
\item {\it Presence of disturbances for systems with state constraints}:
for it to be interesting, forward invariance of a set $K$ for a hybrid system with disturbances 
is an invariance property that has to hold for all possible disturbances.
In technical terms, for every $x$ such that $(x, w_c)$ belongs to $\Cw$, the vectors in the set $\Fw(x, w_c)$ need to be in directions that flow outside of $K$ is impossible for all values of $w_c$.
Similarly, for each $x$ such that $(x, w_d)$ belongs to $\Dw$, the set $\Gw(x, w_d)$ should be contained in $K$ regardless of the values of $w_d$.
\item {\it Forward invariance analysis of intersection of sets:} when provided a Lyapunov-like function, $\Ly$, for the given system, conditions to guarantee forward invariance properties will need to take advantage of the nonincreasing property of $\Ly$.
In such a case, the state component of the sets $\Cw$ and $\Dw$ will be intersected by sublevel sets of the given Lyapunov-like function, which require less restrictive conditions than for general sets. 
\end{enumerate}

In this paper, we provide results that help tackle these key issues systematically.
For starters, we present a result to guarantee existence of nontrivial solutions to the system modeled as in \eqref{eq:Hw}, which also provides insights for solution behaviors based on completeness.
Then, we introduce the notions and sufficient conditions for forward invariance in hybrid dynamical systems.
The proposed notions of robust forward invariance are uniform over all possible disturbances, and allow for solutions to be nonunique and to cease to exist in finite (hybrid) time (namely, not complete).
For each notion, we propose sufficient conditions that the data of the hybrid inclusions and the set $K$ should satisfy to render $K$ robustly forward invariant.
Results for hybrid systems without disturbances are derived as special cases of the robust ones.
Compared to \cite{aubin.hs.viability}, which studies the nominal systems exclusively, we focus on a more general family of hybrid systems, for example, we do not always insist on the flow map to be Marchaud and Lipschitz; see, e.g., \cite{aubin.viability, aubin.hs.viability, aubin2009set}.
As an application of the results for generic sets $K$, we present a novel approach to verify forward invariance of a sublevel set of a given Lyapunov-like function intersected with the sets where continuous or discrete dynamics are allowed.
Such a result lays the groundwork for the second part of this paper.
Because of the nonincreasing properties of the given Lyapunov-like function along solutions, the developed conditions are less restrictive and more constructive when compared to the ones for a generic set $K$.
Moreover, our results are also insightful for systems with purely continuous-time or discrete-time dynamics.
In fact, because of the generality of the hybrid inclusions framework, the results in this paper are applicable to broader classes of systems, such as those studied in \cite{fernandes1987remarks,blanchini1999set,aubin.viability,yorke1967invariance, bitsoris1988positive, hu2003composite}.


\vspace{-0.2in}
\subsection{Organization and Notation}
The remainder of the paper is organized as follows.\footnote{Preliminary version of the results in this paper appeared without proof in the conference articles \cite{149} and \cite{120}. This work considers a more general class of disturbance signals than \cite{149}. Some conditions from \cite{149} to guarantee invariance are further relaxed in this paper. In addition, compared to \cite{120}, this work includes results to verify forward invariance of sublevel sets of Lyapunov-like functions.
}
Robust and nominal forward invariance are formally defined in Section~\ref{sec:HwNotions}.
Sufficient conditions to guarantee nominal and robust notions are presented in Section~\ref{sec:HSuffConds} and Section~\ref{sec:HwSuffConds}, respectively.
In Section~\ref{sec:HwLya}, a Lyapunov-like function is used to ensure forward invariance of sublevel sets.
An application of our results for the analysis of a controlled inverter system is presented in Section~\ref{sec:inverter}.
Academic examples are provided to illustrate major results.

\noindent{\bf Notation:}
Given a set-valued mapping $M : \reals^m \rightrightarrows \reals^n$, we denote the range of $M$ as $\rge M = \{ y \in \reals^n: y \in M(x), x \in \reals^m \}$ and the domain of $M$ as $\dom M = \{ x \in \reals^m : M(x) \neq \emptyset \}$. 
The closed unit ball around the origin in $\reals^n$ is denoted as $\ball$.
Given $\level \in \reals$, the $\level$-sublevel set of a function $\Ly: \reals^n \rightarrow \reals$ is $\Ls: = \{x \in \reals^n: \Ly(x) \leq \level \}$, and $\Ly^{-1}(\level) = \{x \in \reals^n: \Ly(x) = \level \}$ denotes the $\level$-level set of $\Ly$.
Given a vector $x, |x|$ denotes the 2-norm of $x$.
We use $|x|_K$ to denote the distance from point $x$ to a closed set $K$, i.e., $|x|_K = \inf\limits_{\xi \in K} |x - \xi|.$
The closure of the set $K$ is denoted as $\overline{K}$.
The set of boundary points of a set $K$ is denoted by $\partial K$ and the set of interior points of $K$ is denoted by $\inter K$.
Given vectors $x$ and $y$, $(x, y)$ is equivalent to $[x^\top \ y^\top]^\top$.

\section{Preliminaries}
\label{sec:Hw}
In this paper, we present results on robust and nominal forward invariance properties for hybrid system modeled using the hybrid inclusions framework.
More precisely, for hybrid system $\Hw$ given as in \eqref{eq:Hw}, we are interested in robust forward invariance properties of a set that are uniform in the allowed disturbances $w$;
while the notions of nominal forward invariance are studied for hybrid systems $\HS$ as in \cite{65}, which is considered as a special case of $\Hw$ with constant zero disturbance, i.e., $w \equiv 0$.
We further explore the relaxed conditions to guarantee nominal forward invariance of sublevel sets of Lyapunov-like functions.
In this section, we present basic definitions and properties of $\Hw$ that are important for deriving the forthcoming results.

Following \cite{65}, a solution to the hybrid system $\Hw$ is parameterized by the ordinary time variable $t \in \reals_{\geq 0}:=[0,\infty)$ and by the discrete jump variable $j \in \nats := \{0,1,2,...\}$, and defined on a hybrid time domain $\SSS \subset \reals_{\geq 0} \times \nats$; see \cite[Definition 2.3]{65}.
The set $\SSS$ is a hybrid time domain if, for each $(T,J)\in \SSS$, $\SSS\ \cap\ \left( [0,T]\times\{0,1,...,J\} \right)$ can be written as $\cup_{j=0}^{J-1} \left([t_j,t_{j+1}],j\right)$ for some finite sequence of times $0=t_0\leq t_1 \leq t_2 \leq ... \leq t_J$.
A hybrid arc $\phi$ is a function on a hybrid time domain if, for each $j\in\nats$, $t\mapsto \phi(t,j)$ is absolutely continuous on the interval $I^j: = \{t : (t,j) \in \dom \phi \}$. 

The data of hybrid system $\Hw$ in \eqref{eq:Hw} is defined by the flow set $\Cw\subset \reals^n \times \Wc$, the flow map $\Fw : \reals^n \times \Wc \rightrightarrows \reals^n$, the jump set $\Dw\subset \reals^n \times \Wd$, and the jump map $\Gw : \reals^n \times \Wd \rightrightarrows \reals^n$.
The space for the state $x$ is $\reals^n$ and the space for the disturbance $w = (w_c, w_d)$ is ${\cal W} = \Wc \times \Wd \subset \reals^{d_c}\times \reals^{d_d}$.
The sets $\Cw$ and $\Dw$ define conditions that $x$ and $w$ should satisfy for flows or jumps to occur.
In this paper, we assume $\dom \Fw \supset \Cw$ and $\dom \Gw \supset \Dw$.
A hybrid disturbance $w$ is a function on a hybrid time domain that, for each $j \in \nats$, $t \mapsto w(t,j)$ is Lebesgue measurable and locally essentially bounded on the interval $\{t : (t, j) \in \dom w \}$.
When $w(t, j) = 0$ for every $(t,j) \in \dom w$ (which means that there is no disturbance), the system $\Hw$ reduces to the nominal hybrid system introduced in \cite{65}, which is given by
	\begin{align}\label{eq:H}
		\HS \begin{cases}
			x \in C &\dot{x} \ \ \in F(x)\\
			x \in D &x^+ \in G(x).
		\end{cases}
	\end{align}

For convenience, we define the projection of $S \subset \reals^n \times \Wc$ onto $\reals^n$ as
$\Pwc{S}:= \{x \in \reals^n: \exists w_c \in \Wc \text{ s.t. }(x, w_c) \in S\},$
and the projection of $S \subset \reals^n \times \Wd$ onto $\reals^n$ as
$\Pwd{S}:= \{x \in \reals^n: \exists w_d \in \Wd \text{ s.t. }(x, w_d) \in S\}.$
Given sets $\Cw$ and $\Dw$, the set-valued maps $\wxc: \reals^n \rightrightarrows \Wc$ and $\wxd: \reals^n \rightrightarrows \Wd$ are defined for each $x \in \reals^n$
as $\wxc(x) := \{w_c \in \Wc: (x, w_c) \in \Cw\}$
and $\wxd(x) := \{w_d \in \Wd: (x, w_d) \in \Dw\}$, respectively.

As an extension to the definition of solution to \eqref{eq:H}, namely, Definition~\ref{def:solution}, solution pairs to a hybrid system $\Hw$ as in \eqref{eq:Hw} are defined as follows.
\begin{definition}(solution pairs to $\Hw$)\label{def:HwSol}
	A pair $(\phi, w)$ consisting of a hybrid arc $\phi$ and a hybrid disturbance $w = (w_c, w_d)$, with $\dom\phi = \dom w (= \dom (\phi, w)),$ is a solution pair to the hybrid system $\Hw$ in \eqref{eq:Hw} if $(\phi(0,0), w_c(0,0)) \in \overline{\Cw}$ or $(\phi(0,0), w_d(0,0))\in \Dw$, and
	\begin{enumerate}[label = (S$\w$\arabic*), leftmargin = \ItemLong]
		\item\label{item:Sw1} for all $j \in \nats$ such that $I^j$ has nonempty interior
		\begin{align*}
		&(\phi(t,j), w_c(t,j)) \in \Cw &\mbox{for all } t \in \text{\normalfont int }I^j,\\
		&\frac{d\phi}{dt}(t,j) \in \Fw(\phi(t,j), w_c(t,j)) &\mbox{for almost all } t \in I^j,
		\end{align*}
		\item\label{item:Sw2}  for all $(t,j) \in \dom\phi$ such that $(t, j+1) \in \dom\phi$,
		\begin{align*}
		&(\phi(t,j), w_d(t,j)) \in \Dw\\
		&\phi(t,j+1) \in \Gw(\phi(t,j), w_d(t,j)).
		\end{align*}
	\end{enumerate}
	In addition, a solution pair $(\phi, w)$ to $\Hw$ is
	\begin{itemize}[leftmargin = \ItemSp]
		\item nontrivial if $\dom (\phi,w)$ contains at least two points;
		\item complete if $\dom (\phi,w)$ is unbounded;
		\item maximal if there does not exist another $(\phi, w)'$ such that $(\phi, w)$ is a truncation of $(\phi, w)'$ to some proper subset of $\dom(\phi, w)'$.
		\hfill $\square$
	\end{itemize}
\end{definition}
We use $\sol_{\Hw}$ to represent the set of all maximal solution pairs to the hybrid system $\Hw$ and, given $K \subset \reals^n$,  $\sol_{\Hw}(K)$ denotes the set that includes all maximal solution pairs $(\phi, w)$ to the hybrid system $\Hw$ with $\phi(0,0) \in K$.


\section{Robust and Nominal Forward Invariance}
\label{sec:HwNotions}
In this section, we formally introduce the notions of robust and nominal forward invariance of sets for system $\Hw$ given as in \eqref{eq:Hw} and $\HS$ given as in \eqref{eq:H}, respectively.
In particular, a set $K$ enjoys robust forward invariance when the state evolution begins from $K$ and stays within $K$ regardless of the value of the disturbance $w$.
First, we introduce weak versions of such forward invariance notions for $\Hw$.
\begin{definition}(robust weak forward (pre-)invariance of a set) \label{def:rwFI}
	The set $K \subset \reals^n$ is said to be robustly weakly forward pre-invariant for $\Hw$ if for every $x \in K $ there exists one solution pair $(\phi, w) \in \sol_{\Hw}(x)$ such that $\rge \phi \subset K$.
	The set $K \subset \reals^n$ is said to be robustly weakly forward invariant for $\Hw$ if for every $x \in K$ there exists a complete $(\phi, w)\in \sol_{\Hw}(x)$ such that $\rge \phi \subset K$.
	\hfill $\square$
\end{definition}

The following notions are considered stronger than the ones in Definition~\ref{def:rwFI} because all maximal solution pairs that start from the set $K$ are required to stay within $K$.
\begin{definition}(robust forward (pre-)invariance of a set) \label{def:rFI}
	The set $K \subset \reals^n$ is said to be robustly forward pre-invariant for $\Hw$ if every $(\phi, w)\in \sol_{\Hw}(K)$ is such that $\rge \phi \subset K$.
	The set $K \subset \reals^n$ is said to be robustly forward invariant for $\Hw$ if for every $x \in K$ there exists a solution pair to $\Hw$ and every $(\phi, w)\in \sol_{\Hw}(K)$ is complete and such that $\rge \phi \subset K$.
	\hfill $\square$
\end{definition}

In the upcoming second part of this paper, Definition~\ref{def:rwFI} and Definition~\ref{def:rFI} are presented in the context of {\em robust controlled invariance} properties of sets for $\Huw$ under the effect of a given state-feedback pair.

For hybrid systems without disturbances, i.e., $\HS$ given as in \eqref{eq:H}, the forward invariance notions introduced above reduce to the ones below; see also \cite[Definition 2.3 - Definition 2.6]{120}.
\begin{definition}(nominal forward invariance of a set)\label{def:FI}
	The set $K \subset \reals^n$ is said to be
	\begin{itemize}[leftmargin = 4mm]
		\item weakly forward pre-invariant for $\HS$ if for every $x \in K$ there exists one $\phi \in \sol_{\HS}(x)$ with $\rge\phi \subset K$;
		\item weakly forward invariant for $\HS$ if for every $x \in K$ there exists one complete solution $\phi \in \sol_{\HS}(x)$ with $\rge\phi \subset K$;
		\item forward pre-invariant for $\HS$ if every $\phi \in \sol_{\HS}(K)$ has $\rge\phi \subset K$;
		\item forward invariant for $\HS$ if for every $x \in K$ there exists one solution, and every $\phi \in \sol_{\HS}(K)$ is complete and has $\rge\phi \subset K$.
	\end{itemize}
\end{definition}
The relationship among these four notions is summarized in the diagram in Figure~\ref{fig:notions}.
\IfConf{
\vspace{-3mm}

	\begin{figure}[H]
		\hspace{2mm}
		\setlength{\unitlength}{.4\textwidth}
		\includegraphics[width=\unitlength]{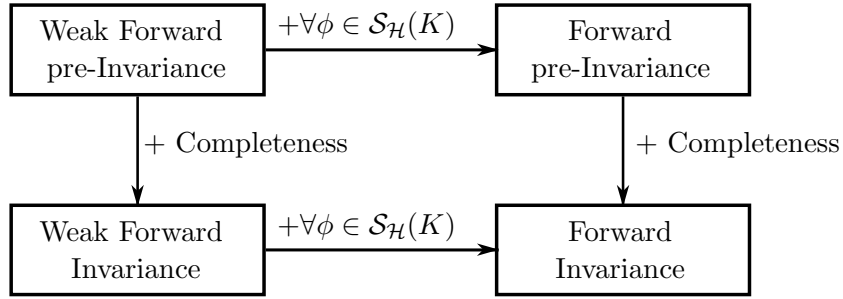}
		\small
		\put(-.16,.2){$+$ Completeness}
		\put(-.82,.2){$+$ Completeness}
		\put(-.64,.36){$+ \forall \phi \in \sol_{\HS}(K)$}
		\put(-.64,.09){$+ \forall \phi \in \sol_{\HS}(K)$}
		\put(-.96,.35){Weak Forward}
		\put(-.95,.3){pre-Invariance}
		\put(-.96,.08){Weak Forward}
		\put(-.93,.03){ Invariance}
		\put(-.25,.35){Forward}
		\put(-.3,.3){pre-Invariance}
		\put(-.25,.08){Forward}
		\put(-.28,.03){ Invariance}
	\vspace{-1mm}
		\caption{Relationships of the notions of forward invariance for a set $K$.}
		\label{fig:notions}
	\end{figure}

\vspace{-4mm}

}{
	\begin{figure}[H]
		\centering
		\setlength{\unitlength}{.6\textwidth}
		\includegraphics[width=\unitlength]{notions.eps}
		\small
		\put(-.16,.2){$+$ Completeness}
		\put(-.82,.2){$+$ Completeness}
		\put(-.64,.36){$+ \forall \phi \in {\cal S}_{\HS}(K)$}
		\put(-.64,.09){$+ \forall \phi \in {\cal S}_{\HS}(K)$}
		\put(-.96,.35){Weak Forward}
		\put(-.95,.3){pre-Invariance}
		\put(-.96,.08){Weak Forward}
		\put(-.93,.03){ Invariance}
		\put(-.25,.35){Forward}
		\put(-.3,.3){pre-Invariance}
		\put(-.25,.08){Forward}
		\put(-.28,.03){ Invariance}
		\caption{Relationships of the notions of forward invariance for a set $K$.}
		\label{fig:notions}
	\end{figure}
}
Note that some of the proposed notions require solutions to exist and maximal solutions to be complete.
Hence, inspired by the conditions guaranteeing existence of solutions to $\HS$ (see Proposition~\ref{prop:existence}), we provide the following result for guaranteeing existence of nontrivial solution pairs to $\Hw$ and characterizing their possible behavior.
\begin{proposition}(basic existence under disturbances)\label{prop:wexistence}
	Consider a hybrid system $\Hw = \dataw$ as in \eqref{eq:Hw}.
	Let $\xi \in \overline{\Pwc{\Cw}} \cup \Pwd{\Dw}$.
	If $\xi \in \Pwd{\Dw}$, or
	\begin{enumerate}[label = (VC$\w$), leftmargin=3\parindent]
		\item\label{item:VCw} there exist $ \varepsilon > 0$, an absolutely continuous function $\widetilde{z}: [0,\varepsilon] \rightarrow \reals^n$ with $\widetilde{z}(0) = \xi$, and a Lebesgue measurable and locally essentially bounded function $\widetilde{w}_c : [0,\varepsilon] \rightarrow \Wc$ such that $(\widetilde{z}(t), \widetilde{w}_c(t))\in \Cw$ for all $t \in (0,\varepsilon)$ and $\dot{\widetilde{z}}(t) \in \Fw(\widetilde{z}(t), \widetilde{w}_c(t))$ for almost all $t \in [0,\varepsilon]$, where $\widetilde{w}_c(t) \in \wxc(\widetilde{z}(t))$ for every $t \in [0,\varepsilon]$,
	\end{enumerate}
	then, there exists a nontrivial solution pair $(\phi, w)$ from the initial state $\phi(0,0) = \xi$.
	If $\xi \in \Pwd{\Dw}$ and \ref{item:VCw} holds for every $\xi \in \overline{\Pwc{\Cw}} \setminus \Pwd{\Dw}$, then there exists a nontrivial solution pair to $\Hw$ from every initial state $\xi \in \overline{\Pwc{\Cw}} \cup \Pwd{\Dw}$, and every solution pair $(\phi, w) \in \sol_{\Hw}(\overline{\Pwc{\Cw}} \cup \Pwd{\Dw})$ from such points satisfies exactly one of the following:
	\begin{enumerate}[label = \alph*), leftmargin = \ItemSp]
		\item\label{item:a} the solution pair $(\phi,w)$ is complete;
		\item\label{item:b} $(\phi,w)$ is not complete and ``ends with flow'': with $(T,J) = \sup\dom(\phi, w)$, the interval $I^J$ has nonempty interior, and either
		\begin{enumerate}[label = b.\arabic*)]
			\item\label{item:b.1} $I^J$ is closed, in which case either
			\begin{enumerate}[label = b.1.\arabic*)]
				\item\label{item:b.1.1} $\phi(T,J) \in \overline{\Pwc{\Cw}}\setminus(\Pwc{\Cw} \cup \Pwd{\Dw})$, or
				\item\label{item:b.1.2} from $\phi(T,J)$ flow within $\Pwc{\Cw}$ is not possible, meaning that there is no $\varepsilon > 0$, absolutely continuous function $\widetilde{z}: [0,\varepsilon] \rightarrow \reals^n$ and a Lebesgue measurable and locally essentially bounded function $\widetilde{w}_c : [0,\varepsilon] \rightarrow \Wc$ such that $\widetilde{z}(0) = \phi(T,J)$, $(\widetilde{z}(t), \widetilde{w}_c(t))\in \Cw$ for all $t \in (0,\varepsilon)$, and $\dot{\widetilde{z}}(t) \in \Fw(z(t), \widetilde{w}_c(t))$ for almost all $t \in [0,\varepsilon]$, where $\widetilde{w}_c(t) \in \wxc(\widetilde{z}(t))$ for every $t \in [0,\varepsilon]$, or
			\end{enumerate}
			\item\label{item:b.2} $I^J$ is open to the right, in which case $(T,J) \notin \dom(\phi, w)$ due to the lack of existence of an absolutely continuous function $\widetilde{z} : \overline{I^J} \rightarrow \reals^n$ and a Lebesgue measurable and locally essentially bounded function $\widetilde{w}_c : [0,\varepsilon] \rightarrow \Wc$ satisfying $(\widetilde{z}(t), \widetilde{w}_c(t)) \in \Cw$ for all $t \in \inter I^J$, $\dot{\widetilde{z}}(t) \in  \Fw(\widetilde{z}(t), \widetilde{w}_c(t))$ for almost all $t \in I^J,$ and such that $\widetilde{z}(t) = \phi(t,J)$ for all $t \in I^J $, where $\widetilde{w}_c(t) \in \wxc(\widetilde{z}(t))$ for every $t \in [0,\varepsilon]$;
		\end{enumerate}
		\item\label{item:c} $(\phi, w)$ is not complete and ``ends with jump'': with $(T,J) = \sup\dom(\phi, w) \in \dom(\phi, w)$, $(T,J-1) \in \dom (\phi, w)$, and either
		\begin{enumerate}[label = c.\arabic*)] 
			\item\label{item:c.1} $\phi(T,J) \notin \overline{\Pwc{\Cw}} \cup \Pwd{\Dw}$, or
			\item\label{item:c.2} $\phi(T,J) \in \overline{\Pwc{\Cw}}$,\footnote{As a consequence of $\phi(T,J) \notin \Pwd{\Dw}$, $\phi(T,J) \in \overline{\Pwc{\Cw}}\setminus \Pwd{\Dw}$ is under the condition in case \ref{item:c.2}.} and from $\phi(T,J)$ flow within $\Pwc{\Cw}$ as defined in \ref{item:b.1.2} is not possible.
		\end{enumerate}
	\end{enumerate}
\end{proposition}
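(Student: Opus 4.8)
The plan is to mirror the proof of the nominal existence result, Proposition~\ref{prop:existence}, carrying the hybrid disturbance $w$ along the construction and replacing the nominal viability requirement by condition~\ref{item:VCw}. First I would establish local existence of a nontrivial solution pair from the given $\xi$. If $\xi \in \Pwd{\Dw}$, then there is $w_d$ with $(\xi, w_d) \in \Dw$, and since $\dom \Gw \supset \Dw$ the set $\Gw(\xi, w_d)$ is nonempty; picking any $g \in \Gw(\xi, w_d)$ and setting $\phi(0,0) = \xi$, $\phi(0,1) = g$ produces a nontrivial solution pair with a single jump. If instead \ref{item:VCw} holds, the absolutely continuous arc $\widetilde{z}$ on $[0,\varepsilon]$ together with the measurable, locally essentially bounded selection $\widetilde{w}_c$ directly satisfies \ref{item:Sw1}, so that $(\phi,w)$ with $\phi(t,0) = \widetilde{z}(t)$ and $w_c(t,0) = \widetilde{w}_c(t)$ is a nontrivial solution pair that flows. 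Under the stated hypotheses this covers every $\xi \in \overline{\Pwc{\Cw}} \cup \Pwd{\Dw}$.

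Next I would pass from a nontrivial solution pair to a maximal one. Solution pairs can be concatenated along their hybrid time domains, and the union of a chain of solution pairs ordered by truncation is again a solution pair, the combined disturbance remaining Lebesgue measurable and locally essentially bounded on each flow interval because each piece is. A Zorn's-lemma argument then yields a maximal $(\phi,w) \in \sol_{\Hw}(\xi)$, so $\sol_{\Hw}(\overline{\Pwc{\Cw}} \cup \Pwd{\Dw})$ is nonempty from every such $\xi$.

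The core of the argument is the trichotomy for a maximal $(\phi,w)$. Writing $(T,J) = \sup \dom(\phi, w)$, if $\dom(\phi,w)$ is unbounded the solution is complete and we are in case~\ref{item:a}; otherwise I would split on whether $(T,J) \in \dom(\phi, w)$. The structural fact driving every remaining subcase is that, by maximality, no further evolution is admissible from the terminal state $\phi(T,J)$: a jump is impossible, which forces $\phi(T,J) \notin \Pwd{\Dw}$ (else one could append a jump via $\dom \Gw \supset \Dw$), and flow is impossible, which is exactly the negation of \ref{item:VCw} at $\phi(T,J)$. If the terminal interval $I^J$ has nonempty interior and is closed, the solution ``ends with flow'', and $\phi(T,J) \notin \Pwd{\Dw}$ together with the failure of flow gives either the boundary situation \ref{item:b.1.1} or the general non-viability \ref{item:b.1.2}; if $I^J$ has nonempty interior but $(T,J) \notin \dom(\phi,w)$, the interval is open to the right and we land in \ref{item:b.2}, attributing non-attainment of $(T,J)$ to the absence of a continuation of the arc to $\overline{I^J}$ satisfying the flow inclusion. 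If instead $\phi(T,J)$ was reached by a jump, then $(T,J-1) \in \dom(\phi,w)$, the solution ``ends with jump'', and the same impossibility analysis yields \ref{item:c.1} when $\phi(T,J) \notin \overline{\Pwc{\Cw}} \cup \Pwd{\Dw}$ and \ref{item:c.2} when $\phi(T,J) \in \overline{\Pwc{\Cw}}$ with flow impossible. A short check then confirms the cases are mutually exclusive and exhaustive.

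The main obstacle I anticipate is the open-to-the-right case~\ref{item:b.2}: one must show that a maximal flowing solution whose supremal time is not attained fails to extend for the single admissible reason stated there---nonexistence of an absolutely continuous continuation of the arc satisfying \ref{item:Sw1} on the closed interval---rather than, for instance, failure of measurability or local essential boundedness of the disturbance selection. Disentangling escape of the state from failure of the disturbance selection, and verifying that these exhaust the obstructions to extension, is the delicate point introduced by the presence of $w$ relative to the nominal argument.
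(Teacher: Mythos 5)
Your proposal follows essentially the same route as the paper's proof: the same two local constructions (a single jump from $\xi \in \Pwd{\Dw}$, a flowing arc from $\widetilde{z}$ and $\widetilde{w}_c$ under \ref{item:VCw}), and the same case split on whether $(T,J) \in \dom(\phi,w)$ and on whether $I^J$ is a singleton or has nonempty interior to obtain the trichotomy. The only addition is your explicit Zorn's-lemma passage to maximal solution pairs, a detail the paper leaves implicit; otherwise the arguments coincide.
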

	\begin{proof}
	To prove the existence of a nontrivial solution pair from $\xi$, we show that under the given assumptions, a solution pair $(\phi,w)$ satisfying the conditions in Definition~\ref{def:HwSol} can be constructed such that $\dom(\phi, w)$ contains at least two points.
	We have the following cases:
	\begin{enumerate}[label = \roman*), leftmargin = \ItemSp]
		\item\label{item:wexisti}  If $\xi \in \Pwd{\Dw}$, then there exist $w^*_d$ such that $(\xi, w^*_d) \in \Dw$ by definition of $\Pwd{\Dw}$.
		Let the hybrid disturbance $w_1 = (w_c, w_d)$ be defined on $\dom w_1 := \{(0,0)\} \cup \{(0,1)\}$ as $w_d(0,0) = w^*_d$ and $w_d(0,1) = a$, where $a \in \Wd$ and $w_c$ can be arbitrary.
		By definition of the jump map $\Gw$, there exists $b \in \Gw(\xi, w^*_d)$.
		Let $\phi_1$ be a hybrid arc with $\dom \phi_1 = \dom w_1$ defined as $\phi_1(0,0) = \xi$ and $\phi_1(0,1) = b$.
		Then, $(\phi_1, w_1)$ is a nontrivial solution pair to $\Hw$;
		\item\label{item:wexistii}  If $\xi \in \overline{\Pwc{\Cw}} \setminus \Pwd{\Dw}$ and \ref{item:VCw} holds, there exist $\varepsilon > 0$, an absolutely continuous function $\widetilde{z}: [0, \varepsilon] \rightarrow \reals^n$ and a Lebesgue measurable and locally essentially bounded function $\widetilde{w}_c : [0,\varepsilon] \rightarrow \Wc$ with $\widetilde{z}(0) = \xi$ and $\widetilde{w}_c(0) \in \wxc(\xi)$ satisfying \ref{item:Sw1} in Definition~\ref{def:HwSol}.
		Let the hybrid disturbance $w_2 = (w_c, w_d)$ be defined on $\dom w_2 := [0, \varepsilon) \times \{0\}$ with $w_c(t,0) = \widetilde{w}_c(t)$ for every $t \in [0,\varepsilon)$ and let $w_d$ be given arbitrarily.
		Let the hybrid arc $\phi_2$ be defined on $\dom \phi_2 = \dom w_2$ as $\phi_2(t,0) = \widetilde{z}(t)$ for every $t \in [0,\varepsilon)$.
		Then, $(\phi_2, w_2)$  is a nontrivial solution pair to $\Hw$.
	\end{enumerate}
	Item \ref{item:wexisti}  and \ref{item:wexistii}  imply the existence of a nontrivial solution pair to $\Hw$ from every $\xi \in \Pwd{\Dw}$ and every $\xi \in \overline{\Pwc{\Cw}}\setminus \Pwd{\Dw}$, respectively, that is, for every $\xi \in \overline{\Pwc{\Cw}} \cup \Pwd{\Dw}$.
	
	Next, we prove that every maximal solution pair $(\phi,w)$ to $\Hw$ satisfies exactly one of the properties in \ref{item:a}, \ref{item:b}, and \ref{item:c}.
	Suppose the nontrivial solution pair $(\phi, w)$ is not complete, i.e., case \ref{item:a} does not hold and either \ref{item:b} or \ref{item:c} holds.
	We show that only one of these properties holds.
	Let $(T,J) = \sup\dom(\phi, w)$.
	
	If $(T,J) \in \dom(\phi, w)$, then $I^J$ is closed and case \ref{item:b.2} does not hold, for which we have either
	\begin{enumerate}[label = \roman*), resume, leftmargin = \ItemSp]
		\item\label{item:wexistiii}  $I^J$ is a singleton; or
		\item\label{item:wexistiv}  $I^J$ has nonempty interior.
	\end{enumerate}
	If \ref{item:wexistiii}  is true, the solution pair $(\phi, w)$ ends with a jump and either $\phi(T,J) \notin \overline{\Pwc{\Cw}} \cup \Pwd{\Dw}$, which directly leads to case \ref{item:c.1}, or $\phi(T,J) \in \overline{\Pwc{\Cw}} \cup \Pwd{\Dw}$.
	The latter case leads to \ref{item:c.2} only since otherwise $(\phi, w)$ can be extended by flow via the functions $\widetilde{z}$ and $\widetilde{w}_c$ as described in \ref{item:b.1.2} or by a jump as described in item \ref{item:wexisti} above with an arbitrary $w_d \in \wxd(x)$.
	If \ref{item:wexistiv} is true, then, by item \ref{item:Sw1} in Definition~\ref{def:HwSol}, case \ref{item:b.1.1} holds, i.e., $\phi(T,J) \in \overline{\Pwc{\Cw}}\setminus (\Pwc{\Cw} \cup\Pwd{\Dw})$, or case \ref{item:b.1.2} holds, namely, the solution pair $(\phi, w)$ cannot be extended via flows.
	In summary, if $(T,J) \in \dom \phi$, then only one among \ref{item:b.1.1}, \ref{item:b.1.2}, \ref{item:c.1} and \ref{item:c.2} may hold.
	
	If $(T,J) \notin \dom (\phi, w)$, then $I^J$ is open to the right, and by maximality of $(\phi, w)$, \ref{item:b.2} holds.
\end{proof}

\ConfSp{-2mm}
Proposition~\ref{prop:wexistence} presents conditions guaranteeing existence of nontrivial solution pairs to $\Hw$ from every initial state $\xi \in \overline{\Pwc{\Cw}}\cup \Pwd{\Dw}$, as well as characterizes all possibilities for maximal solution pairs.
In particular, maximal solution pairs that are not complete can either ``end with flow'' or ``end with jump.''
In short, the former means that $I^J$ has a nonempty interior over which $(\phi(t,J), w_c(t,J)) \in \Cw$ for all $t\in \inter I^J$ and $\frac{d\phi}{dt}(t,J) \in \Fw(\phi(t,J), w_c(t,J))$ for almost all $t\in \inter I^J$, where $(T,J) = \sup\dom (\phi, w)$.
In particular, case \ref{item:b.1.1} corresponds to a solution pair ending at the boundary of $\overline{\Cw}$, case \ref{item:b.1.2} describes the case of a solution pair ending after flowing and at a point, where continuing to flow is not possible, while case \ref{item:b.2} covers the case of a solution pair escaping to infinity in finite time.
The case ``end with jump'' means that $(T,J), (T, J - 1) \in \dom (\phi, w), (\phi(T,J-1), w_d(T,J-1)) \in \Dw$, and the solution pair ends either with $\phi(T,J) \in \Pwc{\Cw} \cup \Pwd{\Dw}$ due to flow being not possible or with $\phi(T,J) \notin \Pwc{\Cw} \cup \Pwd{\Dw}$, where $(T,J) = \sup\dom (\phi,w)$.

\begin{remark}\label{rem:existence}
	Case \ref{item:c.1} in Proposition~\ref{prop:wexistence} is not possible when $\Gw(\Dw) \subset \overline{\Pwc{\Cw}} \cup \Pwd{\Dw}$.\footnote{$\Gw(\Dw) = \{x' \in \reals^n: \exists (x, w_d) \in \Dw, x' \in \Gw(x,w_d) \}$}
	Moreover, when the disturbance signal $w_c$ is generated by an exosystem of the form\footnote{The disturbance $w_c$ generated by \eqref{eq:wc} are not necessarily differentiable but rather, absolutely continuous over each interval of flow.
	For examples of exosystems given as in \eqref{eq:wc} and having also jumps, see \cite{46}.}
\ConfSp{-4mm}
	\begin{align}\label{eq:wc}
		\dot{w_c} \in F_e(w_c) \quad w_c \in \Wc,
	\end{align}

\ConfSp{-2mm}
\noindent \ref{item:VCw} can be guaranteed if, for each $(\xi, w'_c)$, there exists a neighborhood $U$ such that for every $(x, w_c) \in U \cap \Cw, (\Fw(x, w_c), F_e(w_c)) \cap T_{\Cw}(x, w_c) \neq \emptyset$, provided that $\Cw$ is closed and $(\Fw, F_e)$ is outer semicontinuous and locally bounded with nonempty and convex values on $\Cw$.
\end{remark}

%
%

\section{Sufficient Conditions for Robust and Nominal Forward Invariance}
\label{sec:SuffConds}
Definition~\ref{def:rwFI} and Definition~\ref{def:rFI} state that a set enjoys robust forward invariance properties when the state evolution stays within the set regardless of the value of the disturbance.
When disturbance signals are identically zero, Definition~\ref{def:FI} reduces to nominal forward invariance properties for $\HS$ given as in \eqref{eq:H}.
Verifying these properties for a given set using the definitions requires to solve for solution pairs to $\Hw$, and solutions to $\HS$, respectively, explicitly.
To circumvent that challenge, we present, when possible, solution-independent conditions to guarantee each notion.

For clarity of exposition, in Section~\ref{sec:HSuffConds}, we provide sufficient conditions for the nominal cases, of which the preliminary version is presented in \cite{120}.
Then, in Section~\ref{sec:HwSuffConds}, these conditions are extended to hybrid systems with generic disturbance signals, i.e., $\Hw$ given as in \eqref{eq:Hw}.

\ConfSp{-6mm}

\subsection{Sufficient Conditions for Nominal Forward Invariance Properties for $\HS$}
\label{sec:HSuffConds}
\ConfSp{-1mm}
We present the sufficient conditions for forward invariance of a given set $K$ for $\HS$ that involve the data $\data$.
For the discrete dynamics, namely, the jumps, such conditions involve the understanding of where $G$ maps the state to.
Inspired by the well-known Nagumo Theorem \cite{nagumo1942lage}, for the continuous dynamics, namely, the flows, our conditions use the concept of tangent cone to the closed set $K$.
The tangent cone at a point $x \in \reals^n$ of a closed set $K \subset \reals^n$ is defined using the Dini derivative of the distance to the set, and is given by\footnote{In other words, $\omega$ belongs to $T_K(x)$ if and only if there exist sequences $\tau_i \searrow 0$ and $\omega_i \rightarrow \omega$ such that $x + \tau_i \omega_i \in K$ for all $i \in \nats$; see also \cite[Definition 1.1.3]{aubin.viability}. The latter property is further equivalent to the existence of sequences $x_i \in K$ and $\tau_i > 0$ with $x_i \rightarrow x, \tau_i \searrow 0$ such that $\omega = \lim_{i\to\infty} (x_i - x)/\tau_i$.}
\ConfSp{-4mm}
	\begin{align}\label{eq:tangent}
		T_K(x) = \left\{ \omega \in \reals^n : \liminf\limits_{\tau \searrow 0} \frac{|x+\tau \omega|_K}{\tau} = 0 \right\}.
	\end{align}
In the literature (see, e.g., \cite[Definition 4.6]{jahn1994tangent} and \cite{aubin.hs.viability}), this tangent cone is also known as the sequential Bouligand tangent cone or contingent cone.
In contrast to the Clarke tangent cone introduced in \cite[Remark 4.7]{jahn1994tangent}, which is always a closed convex cone for every $x \in K$, the tangent cone (possibly nonconvex) we consider in this work includes all vectors that point inward to the set $K$ or that are tangent to the boundary of $K$.\footnote{Note that, for a convex set, the Bouligand tangent cone coincides with the Clarke tangent cone.}

Our sufficient conditions for forward invariance require part of the data of $\HS$ and the set $K$ to satisfy the following mild assumption.
\begin{assumption}\label{assum:data}
	The sets $K, C$, and $D$ are such that $K \subset \overline{C}\cup D$ and that $K \cap C$ is closed.
	The map $F:\reals^n \rightrightarrows \reals^n$ is outer semicontinuous, locally bounded relative to $K \cap C$, and $F(x)$ is nonempty and convex for every $x \in K \cap C$.
\end{assumption}

The following result is a consequence of the forthcoming Theorem~\ref{thm:rwFI} in Section~\ref{sec:HwSuffConds} and its proof will be delayed to that section.
Sufficient conditions for a given set $K$ to be weakly forward pre-invariant and weakly forward invariant are presented.

\begin{theorem}(nominal weak forward pre-invariance and weak forward invariance)\label{thm:wfi}
	Given $\HS= \data$ as in \eqref{eq:H} and a set $K$, suppose $K, C, D,$ and $F$  satisfy Assumption~\ref{assum:data}.
	The set $K$ is weakly forward pre-invariant for $\HS$ if the following conditions hold:
	\begin{enumerate}[label = \ref{thm:wfi}.\arabic*), leftmargin = 12mm]
		\item\label{item:wfi1} For every $x \in K \cap D$, $G(x) \cap K \neq \emptyset$;
		\item\label{item:wfi2} For every $x \in \chai{\hC} \setminus D$, $F(x) \cap T_{K \cap C}(x) \neq \emptyset$;
	\end{enumerate}
where $\chai{\hC} : = \partial(K \cap C) \setminus L$ and $L : = \{x \in \partial C: F(x) \cap T_{\overline{C}}(x) = \emptyset \}$.
	Moreover, $K$ is weakly forward invariant for $\HS$ if, in addition\chai{, $K \cap L \subset D$ and, with $K^\star = K \setminus D$,}
	\begin{enumerate}[label = N$\star$), resume, leftmargin = 9mm]
		\item\label{item:N*} for every $\phi \in \sol_{\HS}( \chai{K^\star})$ with $\rge \phi \subset K$,
		case \ref{item:b.2n} in Proposition~\ref{prop:existence} does not hold.
	\end{enumerate}
\end{theorem}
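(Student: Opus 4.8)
The plan is to construct, for each initial point $x_0 \in K$, a solution to $\HS$ whose range lies in $K$ by concatenating \emph{viable flows} and \emph{viable jumps}, and then to upgrade this to completeness by ruling out, one at a time, the noncomplete outcomes enumerated in Proposition~\ref{prop:existence}. Throughout, Assumption~\ref{assum:data} supplies exactly the regularity ($F$ outer semicontinuous, locally bounded, with nonempty convex values on the closed set $K \cap C$) needed to run a Nagumo/viability argument on $K \cap C$.

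For weak forward pre-invariance, fix $x_0 \in K$. If $x_0 \in K \cap D$, then condition \ref{item:wfi1} yields $g \in G(x_0) \cap K$, so a single jump gives a nontrivial solution with range in $K$. If instead $x_0 \in K \setminus D$, then $K \subset \overline{C} \cup D$ forces $x_0 \in \overline{C}$, and I would invoke the viability theorem for $\dot x \in F(x)$ on the closed set $K \cap C$: the tangent-cone condition $F(x) \cap T_{K\cap C}(x) \neq \emptyset$ holds automatically on $\inter(K \cap C)$ (where the cone is all of the state space) and is imposed on the boundary piece $\hC \setminus D = \partial(K\cap C)\setminus(L\cup D)$ by condition \ref{item:wfi2}, so there is an absolutely continuous arc remaining in $K \cap C$ over a nontrivial interval. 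The role of $L$ is precisely to collect the boundary points of $C$ from which flow inside $\overline{C}$ is impossible; excising $L$ is what allows \ref{item:wfi2} to be imposed only where flow is genuinely feasible. Concatenating such flows and jumps and extracting a maximal solution (via Zorn's lemma on the partial order of solution restrictions) produces $\phi \in \sol_{\HS}(x_0)$ with $\rge\phi \subset K$, which is weak forward pre-invariance.

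For weak forward invariance I would take the maximal solution $\phi \in \sol_{\HS}(K^\star)$ built above, with $K^\star = K\setminus D$, and show it is complete by excluding each noncomplete alternative of Proposition~\ref{prop:existence}. A jump always lands in $K \subset \overline{C} \cup D$ by \ref{item:wfi1}, so the "ends with jump outside $\overline{C}\cup D$" case cannot occur. If flow halts at some $x \in \partial(K\cap C)$, then either $x \in \hC\setminus D$, where \ref{item:wfi2} permits the flow to be continued inside $K\cap C$; or $x \in D$, where \ref{item:wfi1} permits a jump within $K$; or $x \in K \cap L$, in which case the hypothesis $K \cap L \subset D$ collapses this into the jump case. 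Thus flow is never forced to stop at a point admitting neither flow nor jump. The restriction to $K^\star$ in \ref{item:N*} reflects that initial points in $D$ can always jump, deferring completeness to the post-jump behavior, which lands back in $K$ and reduces to the $K^\star$ analysis (with a possible $j$-unbounded, hence complete, chain of jumps). The single remaining obstruction, finite-time escape during flow, i.e.\ case \ref{item:b.2n}, is excluded directly by \ref{item:N*}. Hence the constructed solution is complete with range in $K$.

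The hard part is the flow step: making the viability/Nagumo argument rigorous on $K \cap C$ when this set is only closed (neither compact nor convex is assumed) and when the feasible-flow portion of $\partial(K\cap C)$ must be separated from $L$. Care is needed to show that imposing the tangent-cone condition exactly on $\hC\setminus D$, rather than on all of $\partial(K\cap C)$, still yields a viable arc, and that the inductive concatenation of flows and jumps produces a genuine maximal hybrid arc whose domain is a legitimate hybrid time domain, including the delicate landing points in $\overline{C}\setminus C$. This bookkeeping is exactly what the paper sidesteps by deriving the statement from the robust Theorem~\ref{thm:rwFI} with $w\equiv 0$; for a self-contained proof I would instead localize the classical viability theorem to the closed set $K\cap C$ and verify the concatenation and completeness directly, as sketched above.
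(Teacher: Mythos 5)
Your proposal is correct and takes essentially the same approach as the paper: the paper proves Theorem~\ref{thm:wfi} by specializing Theorem~\ref{thm:rwFI} to $w\equiv 0$, and that proof is structurally your argument --- jumps kept in $K$ from $K\cap D$ via \ref{item:wfi1}, nontrivial viable flows in the closed set $K\cap C$ obtained from the tangent-cone condition \ref{item:wfi2} together with a local viability existence result (\cite[Proposition 6.10]{65}), points of $K\cap L$ handled by trivial maximal solutions for pre-invariance and absorbed into $D$ for invariance, and completeness established by ruling out the noncomplete cases of Proposition~\ref{prop:existence} with \ref{item:N*} excluding finite-time escape. The analytic step you flag as ``the hard part'' is discharged in the paper by invoking the existing local existence and non-extendability results for hybrid inclusions (\cite[Proposition 6.10 and Lemma 5.26]{65}) rather than re-deriving a viability theorem on $K\cap C$.
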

\ConfSp{-1mm}

The next result, which is a consequence of Theorem~\ref{thm:rFI} provides sufficient conditions for a set $K$ to be forward pre-invariant and forward invariant for $\HS$.
\begin{theorem}(nominal forward pre-invariance and forward invariance)\label{thm:fi}
	Given $\HS= \data$ as in \eqref{eq:H} and a set $K \subset \reals^n$. Suppose $K, C, D,$ and $F$ satisfy Assumption~\ref{assum:data} and that $F$ is locally Lipschitz on $C$.
	Let $\hC$ and $L$ be given as in Theorem~\ref{thm:wfi}.
	The set $K$ is forward pre-invariant for $\HS$ if the following conditions hold:
	\begin{enumerate}[label = \ref{thm:fi}.\arabic*), leftmargin=12mm]
		\item\label{item:fi1} $G(K \cap D) \subset K$;
		\item\label{item:fi2} For every $x \in \hC$, $F(x) \subset T_{K \cap C}(x)$.
	\end{enumerate}
	Moreover, $K$ is forward invariant for $\HS$ if, in addition, $K \cap L \subset D$ and, with $K^\star = K\cap C$, item \ref{item:N*} in Theorem~\ref{thm:wfi} holds.
\end{theorem}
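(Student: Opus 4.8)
The plan is to prove forward pre-invariance first and then upgrade to forward invariance by excluding every incomplete termination behavior permitted by Proposition~\ref{prop:existence}. Structurally, this is the ``strong'' analogue of Theorem~\ref{thm:wfi}: the intersection conditions \ref{item:wfi1}--\ref{item:wfi2} are strengthened to the inclusions \ref{item:fi1}--\ref{item:fi2}, and the extra hypothesis that $F$ is locally Lipschitz on $C$ is precisely what turns ``some solution stays'' into ``every solution stays.'' Since the statement is advertised as a consequence of the robust Theorem~\ref{thm:rFI}, the shortest formal route is to invoke that result with the constant disturbance $w\equiv 0$; I describe instead the direct argument, which is what the robust proof specializes to.

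For forward pre-invariance I would fix an arbitrary maximal $\phi\in\sol_{\HS}(K)$ and show $\rge\phi\subset K$ by induction over the jump index. The jump step is immediate: if $\phi(t,j)\in K$ and $(t,j+1)\in\dom\phi$, then $\phi(t,j)\in D$ by the definition of solution, so $\phi(t,j)\in K\cap D$, and \ref{item:fi1} yields $\phi(t,j+1)\in G(K\cap D)\subset K$. The flow step is the crux and reduces to flow-invariance of the closed set $K\cap C$, since on the interior of a flow interval the solution lies in $C$. For that lemma I would run a Nagumo/Gronwall argument on $w(t):=\tfrac12|\phi(t,j)|_{K\cap C}^2$: picking a nearest point $\pi(t)$ of $\phi(t,j)$ in $K\cap C$, one has $\dot w(t)=\langle \phi(t,j)-\pi(t),\dot\phi(t,j)\rangle$ for a.e.\ $t$; choosing $f_1\in F(\pi(t))$ with $|\dot\phi(t,j)-f_1|\le \ell|\phi(t,j)-\pi(t)|$ from the local Lipschitz bound, and using that $\phi(t,j)-\pi(t)$ is a proximal normal (hence $\langle\phi(t,j)-\pi(t),v\rangle\le 0$ for every $v\in T_{K\cap C}(\pi(t))$), condition \ref{item:fi2} kills the leading term and gives $\dot w(t)\le 2\ell\,w(t)$. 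Gronwall with $w(t_j)=0$ then forces $w\equiv 0$. The convexity, local boundedness, and outer semicontinuity in Assumption~\ref{assum:data} are what justify the measurability and absolute continuity needed in this estimate.

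The hard part will be that \ref{item:fi2} is imposed only on $\hC=\partial(K\cap C)\setminus L$, not on all of $\partial(K\cap C)$, so I must control what happens when the nearest point $\pi(t)$ falls in $L$. The key observation is that a solution flowing inside $C$ can never occupy a point of $L$ in the interior of a flow interval: at $x\in L\subset\partial C$ we have $F(x)\cap T_{\overline{C}}(x)=\emptyset$, so every admissible velocity points out of $\overline{C}\supset C$ and the solution would leave $C$ at once, contradicting $\phi(t,j)\in C$. Arguing then by a first-exit time $t^\star=\inf\{t>t_j:\phi(t,j)\notin K\cap C\}$, either $t^\star$ is the right endpoint of $I^j$, in which case closedness of $K\cap C$ gives no violation, or $t^\star$ is interior, in which case $\phi(t^\star,j)\in\hC$, where \ref{item:fi2} applies and the local Gronwall estimate just past $t^\star$ contradicts the exit. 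Making this airtight---in particular showing that the projections $\pi(t)$ stay off $L$ near the exit, which leans on the outer semicontinuity of $F$ to separate $L$ from the relevant boundary---is the most delicate point.

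Finally, for forward invariance I would add $K\cap L\subset D$ and item \ref{item:N*} and combine pre-invariance with the trichotomy of Proposition~\ref{prop:existence}. Existence from each point of $K$ follows from the tangency condition together with \ref{item:fi1}, so it remains to exclude the incomplete cases for every maximal $\phi$ from $K$ (which already satisfies $\rge\phi\subset K$). The inclusion $K\subset\overline{C}\cup D$ from Assumption~\ref{assum:data} with $G(K\cap D)\subset K$ rules out terminations ``ending with jump'' outside $\overline{C}\cup D$; the hypothesis $K\cap L\subset D$ guarantees that at any point where flow must cease a jump is available, excluding the ``ending with flow at the boundary of $C$'' terminations; and item \ref{item:N*} excludes finite-time escape, i.e.\ case \ref{item:b.2n} of Proposition~\ref{prop:existence}. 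Hence every maximal solution from $K$ is complete and stays in $K$. The careful bookkeeping over all termination cases, together with the fact that here $K^\star=K\cap C$ rather than $K\setminus D$ as in the weak case, is the second place where attention is required.
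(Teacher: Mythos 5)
Your proposal is correct and follows essentially the same route as the paper: the paper obtains this theorem by specializing the robust Theorem~\ref{thm:rFI} to $w\equiv 0$, and the proof of that result is precisely your nearest-point/Gronwall argument on the distance to $K\cap C$ (the paper estimates the Dini derivative of the distance itself rather than its square, which is cosmetic), followed by the same case analysis via Proposition~\ref{prop:existence} using \ref{item:N*} and $K\cap L\subset D$ for completeness. The delicate point you flag --- that \ref{item:fi2} is imposed only off $L$, so one must rule out the nearest-point projection landing in $L$ --- is not explicitly treated in the paper's proof either, so your argument is at the same level of rigor as the published one.
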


\chai{
\begin{remark}
	Some of the conditions in Theorem~\ref{thm:wfi} and Theorem~\ref{thm:fi} are weaker than those required by results in \cite{120}.
	The construction of the set $L$ in items \ref{item:wfi2} and \ref{item:fi2} is inspired by the {\em viability domain} in \cite[Definition 1.1.5]{aubin.viability}.
	Note that when \ref{item:N*} holds, completeness of maximal solutions is guaranteed by ensuring that
	$K \cap L \subset D$, which guarantees that solutions can continue to evolve from $L$ via a jump.
\end{remark}
}

The following example is used to illustrate Theorem~\ref{thm:wfi} and Theorem~\ref{thm:fi}.
\begin{example}[solutions with finite escape time]\label{ex:pre}
	Consider the hybrid system $\HS = \data$ in $\reals^2$ with system data given by
	\IfConf{
	$F(x) :=(1 + x_1^2, 0)$ for every $x \in C:= \{ x \in \reals^2: x_1 \in [0, \infty), x_2 \in [-1,1] \}$ and $G(x) := (x_1+ \ball, x_2)$ for every $x \in D:= \{ x \in \reals^2: x_1 \in [0, \infty), x_2 = 0 \}.$
	}{
	\begin{align*}
	& F(x) := \begin{bmatrix} 1 + x_1^2\\ 0 \end{bmatrix} 
	& \forall x\in C := \{ x \in \reals^2: x_1 \in [0, \infty), x_2 \in [-1,1] \},\\
	& G(x) := \begin{bmatrix} x_1+ \ball \\ x_2 \end{bmatrix}
	& \forall x\in D := \{ x \in \reals^2: x_1 \in [0, \infty), x_2 = 0 \}.
	\end{align*}
}
Let $K = C$ and note the following properties of maximal solutions to $\HS$:
\begin{itemize}[leftmargin = 4mm]
	\item For some $x \in K$, there exists $\phi = (\phi_1, \phi_2) \in \sol_\HS(x)$ with $\rge \phi \subset K$, but is not complete due to $\lim\limits_{t \searrow t^*}\phi_1(t,0) = \infty$ with $t^* <\infty$; for instance, from $x = (0, 1)$, the solution given by $\phi(t,0) = (\tan(t), 1)$ for every $(t,0) \in \dom \phi$ has its $\phi_1$ component escape to infinite as $t$ approaches $t^* = \pi/2$;
	\item From points in $D \subset K$, there exist maximal solutions that leave $K$ and are not complete: such solutions end after a jump because their $x_1$ component is mapped outside of $K$.
\end{itemize}
	\begin{figure}[h]
		\centering
		\includegraphics[width=\figsize]{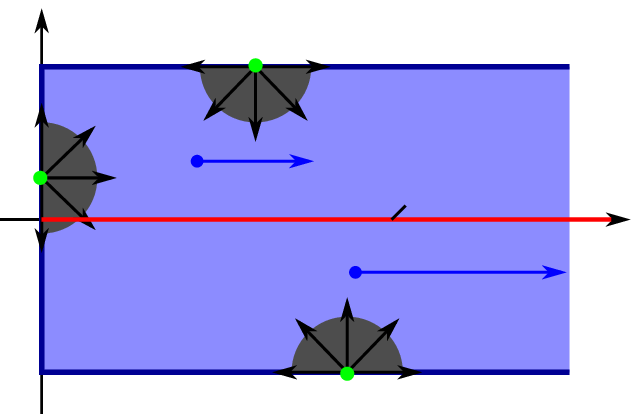}
		\small
		\put(-0.03,.33){$x_1$}
		\put(-1.01,.65){$x_2$}
		\put(-.98,.53){1}
		\put(-.98,.05){-1}
		\put(-.98,.26){0}
		\put(-.7,.17){$C$}
		\put(-.36,.35){$D$}
		\put(-1,.35){$x_v$}
		\put(-.83,.32){$T_{K\cap C}(x_v)$}
		\put(-.62,.58){$x_u$}
		\put(-.5,.5){$T_{K\cap C}(x_u)$}
		\put(-.47,.03){$x_l$}
		\put(-.35,.1){$T_{K\cap C}(x_l)$}
		\caption{Flow and jump sets of the system in Example~\ref{ex:pre}.}
		\label{fig:propwppfi}
	\end{figure}
Thus, we verify weak forward pre-invariance of $K$ by applying Theorem~\ref{thm:wfi}.
The sets $K, C, D$ and the map $F$ satisfy Assumption~\ref{assum:data} by construction and condition \ref{item:wfi1} holds for $\HS$ by definition of $G, D$ and $K$.
Since $L = \emptyset$, condition \ref{item:wfi2} holds because for every $x \in \hC$, $F(x)$ points horizontally, and
\IfConf{
	$$T_{K \cap C} (x) = \begin{cases}
	\reals \times \reals_{\leq 0} & \ifeq x_1 \in (0, \infty), x_2 = 1\\
	\reals \times \reals_{\geq 0} & \ifeq  x_1 \in (0, \infty), x_2 = -1\\
	\reals_{\geq 0} \times \reals_{\leq 0} &\ifeq x = (0,1)\\
	\reals_{\geq 0} \times \reals &\ifeq x_1 = 0,  x_2 \in (-1,1)\\
	\reals_{\geq 0} \times \reals_{\geq 0} & \ifeq x =(0,-1).
	\end{cases}$$
}{
$$T_{K \cap C} (x) = \begin{cases}
\reals \times \reals_{\leq 0} & \ifeq x \in  \{ x \in \reals^2: x_1 \in (0, \infty), x_2 = 1\}\\
\reals \times \reals_{\geq 0} & \ifeq x \in  \{ x \in \reals^2: x_1 \in (0, \infty), x_2 = -1\}\\
\reals_{\geq 0} \times \reals_{\leq 0} &\ifeq x = (0,1)\\
\reals_{\geq 0} \times \reals &\ifeq x \in  \{ x \in \reals^2: x_1 = 0,  x_2 \in (-1,1)\}\\
\reals_{\geq 0} \times \reals_{\geq 0} & \ifeq x =(0,-1).
\end{cases}$$
}
\noindent Tangent cones of $K \cap C$ at points $x_u, x_v$ and $x_l$ of $K$ are shown in Figure~\ref{fig:propwppfi}.

Now, consider the same data but with $G$ replaced by $G'(x) = G(x) \cap (\reals_{\geq 0} \times \reals)$ for each $x \in D$.
The set $K = C$ is forward pre-invariant for this system.
This is because maximal solutions are not able to jump out of $K$ as $G'$ only maps $x_1$ components of solutions to $[0, +\infty)$.
More precisely, the conditions in Theorem~\ref{thm:fi} hold: we have $G'(D \cap K) \subset K$, and Assumption~\ref{assum:data} and condition \ref{item:fi2} hold as discussed above.
%
\hfill $\triangle$
\end{example}

\begin{remark}
	The hybrid inclusions framework allows for an overlap between the flow set $C$ and the jump set $D$.
	As a result, the proposed conditions are not necessary to induce forward invariance properties of sets for $\HS$.
	When existence of nontrivial solutions and completeness are not required for every point in $K$, as in the ``pre'' notions, some of these conditions are necessary.
	In fact, suppose $K,C,D$, and $F$ satisfy Assumption~\ref{assum:data}:
	\begin{itemize}[leftmargin = 4mm]
		\item If $K$ is weakly forward pre-invariant for $\HS$, then for every
		$x \in (K \cap D)\setminus C$, $G(x) \cap K \neq \emptyset.$
		\item If $K$ is forward pre-invariant or forward invariant for $\HS$, then condition \ref{item:fi1} in Theorem~\ref{thm:fi} holds.
		\item If $K$ is weakly forward invariant or forward invariant for $\HS$, then for every $x \in K \setminus D$, $F(x) \cap T_{K \cap C}(x) \neq \emptyset$.\footnote{A similar claim is presented in \cite[Proposition 3.4.1]{aubin.viability} for continuous-time system.}
	\end{itemize}
	Moreover, unlike \cite[Theorem 3]{aubin.hs.viability}, when the flow map $F$ is Marchaud\footnote{A map $F$ is Marchaud on $K \cap C$ when Assumption~\ref{assum:data} holds and $F$ has linear growth on $K \cap C$; see \cite[Definition 2.2.4]{aubin.viability}.} and Lipschitz as defined in Definition~\ref{def:lip}, condition $F(x) \subset T_{K \cap C}(x)$ for every $x \in K \setminus D$ is not necessary as the following example shows.
		Consider $\HS$ in \eqref{eq:H} with data 
		$F(x) =
		\begin{cases}
		1 & \ifeq x > -1\\
		[-1,1] & \ifeq x = -1
		\end{cases}$ 
		for each $x \in C : = [-1, 1]$, $G(x) := \{-1, 0\}$ for each $x \in D: = \{1\}$.
		By inspection, the set $K = C$ is forward invariant for $\HS$ and $F$ is Marchaud and Lipschitz.
		However, at $x = -1 \in K\setminus D$, $F(-1) \supset -1$ but $-1 \notin T_{K \cap C}(-1)$.
\end{remark}

\ConfSp{-4mm}

\subsection{Sufficient Conditions for N$\star$)}\label{sec:nstar}
In Theorem~\ref{thm:wfi} and Theorem~\ref{thm:fi}, item \ref{item:N*} excludes case \ref{item:b.2} in Proposition~\ref{prop:existence}, where solutions escape to infinity in finite time during flows.
In fact, when every solution $\phi$ to $\dot{x} \in F(x)$ with $\phi(0,0) \in K^\star$ does not have a finite escape time, namely, there does not exist $t^* < \infty$ such that $\lim\limits_{t \searrow t^*} |\phi(t)| = \infty$, item \ref{item:N*} holds for $\HS$ and $K^\star$ as defined in Theorem~\ref{thm:wfi} and Theorem~\ref{thm:fi}, respectively.
Although, in principle, such a condition is solution dependent, it can be guaranteed without solving for solutions when $F$ is single valued and globally Lipschitz.
Moreover, we provide several other alternatives in the next result.
\begin{lemma}(sufficient conditions for completeness)\label{lem:Nstar}
	Given $\HS = \data$ and a set $K\subset \reals^n$, suppose $K, C, D,$ and $F$ satisfy Assumption~\ref{assum:data}, \chai{set $D$ is closed} and item
	\ref{item:wfi2} in Theorem~\ref{thm:wfi} holds.\footnote{When \ref{item:Nstar1} holds, condition \ref{item:wfi2} in Theorem~\ref{thm:wfi} is not required.} 
	Condition \ref{item:N*} holds if
	\begin{enumerate}[label = \ref{lem:Nstar}.\arabic*), leftmargin=2.8\parindent]
		\item\label{item:Nstar1} $K^\star$ is compact; or
		\item\label{item:Nstar3} $F$ has linear growth on $K^\star$.
	\end{enumerate}
\end{lemma}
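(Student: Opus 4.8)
The plan is to argue by contradiction, ruling out case \ref{item:b.2n} of Proposition~\ref{prop:existence} --- the only ``ends with flow'' scenario in which a maximal solution escapes to infinity in finite (ordinary) time. Fix $\phi \in \sol_{\HS}(K^\star)$ with $\rge\phi \subset K$ and suppose, toward a contradiction, that \ref{item:b.2n} holds. Writing $(T,J) = \sup\dom\phi$, the last flow interval $I^J$ is then open to the right with finite right endpoint $T$, and the only reason $\phi$ fails to extend to $(T,J)$ is a finite escape time, i.e.\ $\lim_{t \nearrow T}|\phi(t,J)| = +\infty$; this is the characterization recorded in the discussion following Proposition~\ref{prop:existence}. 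For $t$ in the interior of $I^J$ we have $\phi(t,J) \in C$ and, since $\rge\phi \subset K$, also $\phi(t,J) \in K \cap C$; moreover $\frac{d}{dt}\phi(t,J) \in F(\phi(t,J))$ for almost all $t \in I^J$. The goal in each case is to show this absolutely continuous arc is bounded on $[t_J, T)$, contradicting the escape. First I would record (using that $D$ is closed together with Assumption~\ref{assum:data}) that the flowing portion of $\phi$ remains in $K^\star$, so that the hypotheses imposed on $K^\star$ apply along it.

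Under \ref{item:Nstar1}, compactness of $K^\star$ immediately yields the contradiction: the arc $t \mapsto \phi(t,J)$ takes values in a bounded set, so $|\phi(t,J)|$ stays bounded on $I^J$ and cannot diverge as $t \nearrow T$. This is why, as noted in the accompanying footnote, \ref{item:wfi2} is not needed here --- boundedness of the range is enforced directly by the geometry of $K^\star$, independently of any tangentiality condition.

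Under \ref{item:Nstar3}, I would run a Gronwall argument. Linear growth of $F$ on $K^\star$ furnishes a constant $\gamma > 0$ with $|v| \le \gamma(|x| + 1)$ for every $x \in K^\star$ and $v \in F(x)$. Since $\phi(\cdot,J)$ is absolutely continuous, $\frac{d}{dt}|\phi(t,J)| \le \bigl|\frac{d}{dt}\phi(t,J)\bigr| \le \gamma(|\phi(t,J)| + 1)$ for almost all $t \in I^J$, whence the comparison/Gronwall inequality gives $|\phi(t,J)| + 1 \le (|\phi(t_J,J)| + 1)\, e^{\gamma (t - t_J)}$ for all $t \in I^J$. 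As $T < \infty$, the right-hand side is bounded on $[t_J, T)$, so $\phi(\cdot,J)$ cannot escape to infinity --- again contradicting \ref{item:b.2n}. Hence, in either case, \ref{item:N*} holds.

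The main obstacle I anticipate is not the Gronwall estimate, which is routine, but the bookkeeping needed to guarantee that the flowing part of $\phi$ genuinely lies in $K^\star$, where the standing hypotheses are posed. Because the framework permits an overlap of $C$ and $D$, and because $K^\star$ is defined differently in Theorem~\ref{thm:wfi} ($K^\star = K \setminus D$) and Theorem~\ref{thm:fi} ($K^\star = K \cap C$), one must verify that a solution flowing while staying in $K$ does not drift into the part of $K \cap C$ lying outside $K^\star$; this is precisely where closedness of $D$ and of $K \cap C$ enter, and it is the step demanding the most care. A secondary point to pin down is that case \ref{item:b.2n} really does force $\lim_{t \nearrow T}|\phi(t,J)| = +\infty$ rather than some other obstruction to extension, which one justifies using the outer semicontinuity and local boundedness of $F$ from Assumption~\ref{assum:data}.
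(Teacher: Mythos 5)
Your overall architecture matches the paper's: proceed by contradiction assuming case \ref{item:b.2n}, note that the only obstruction to extending the arc to $\overline{I^J}$ is $\lim_{t\nearrow T}|\phi(t,J)|=\infty$, and then derive a contradiction separately under \ref{item:Nstar1} and \ref{item:Nstar3}. The compactness case is identical to the paper's. In the linear-growth case you take a genuinely different route: the paper does not run Gronwall at all, but instead invokes Lemma~1 of the Aubin et al.\ reference on impulse differential inclusions (with the identification $X=C$, $K=K\cap C$, and target set $D$), which yields the dichotomy that $t\mapsto\phi(t,J)$ is either defined on $[t_J,\infty)$ with values in $K\cap C$ or defined on a \emph{closed} interval $[t_J,T]$ terminating in $D$ --- either alternative contradicts \ref{item:b.2n} directly. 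Your direct Gronwall estimate is more elementary and self-contained, and the inequality $|\phi(t,J)|+1\le(|\phi(t_J,J)|+1)e^{\gamma(t-t_J)}$ is the honest content behind the citation; what the citation buys the paper is precisely that it never has to argue the arc avoids $D$.

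That is where your proposal has a genuine gap. You assert, as a preliminary step, that ``the flowing portion of $\phi$ remains in $K^\star$'' using closedness of $D$; for the instantiation $K^\star=K\setminus D$ of Theorem~\ref{thm:wfi} this is simply not true: the hybrid inclusions framework allows a solution to flow through $C\cap D$ without jumping, so an arc starting in $K\setminus D$ and viable in $K$ may spend a set of times of positive measure in $K\cap C\cap D$, where the hypothesis ``$F$ has linear growth on $K^\star$'' gives you no bound on $\dot z$. Your differential inequality $\frac{d}{dt}|\phi(t,J)|\le\gamma(|\phi(t,J)|+1)$ therefore only holds for a.e.\ $t$ with $\phi(t,J)\in K^\star$, and it does not integrate to a global bound on $I^J$. (For Theorem~\ref{thm:fi}, where $K^\star=K\cap C$, there is no issue: the flowing arc lies in $K\cap C$ by the definition of solution, $\rge\phi\subset K$, and closedness of $K\cap C$, so your Gronwall argument is complete there.) You flag this bookkeeping problem yourself as ``the step demanding the most care,'' but you do not close it, and closedness of $D$ alone cannot close it. To repair the weak-invariance case along your lines you would either need the growth bound on all of $K\cap C$ (equivalently, on $\overline{K^\star}\cap C$ together with an argument covering $K\cap C\cap D$), or you would need to import the reach-the-target dichotomy that the paper gets from the cited viability lemma.
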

\begin{proof}
	Let $\phi \in \sol_{\HS}(K^\star)$ with $\rge \phi \subset K$ be as described by \ref{item:b.2n} in Proposition~\ref{prop:existence}; namely, $t \mapsto \phi(t,J)$ defined on $I^J$, where $(T,J) = \sup \dom \phi, T+ J < \infty$ and, for some $t^J, I^J = [t_J, T)$.
	Since $t \mapsto \phi(t,J)$ is locally absolutely continuous on $I^J$, $\lim\limits_{t \rightarrow T} \phi(t,J)$ is finite or infinity.
	If it is finite, then $t \mapsto \phi(t,j)$ can be extended to $\overline{I^J}$, which contradicts with \ref{item:b.2n}.
	Then, it has to be that $\lim\limits_{t \rightarrow T} \phi(t,J)$ is infinity.
	When \ref{item:Nstar1} holds, $\lim\limits_{t \rightarrow T} \phi(t,J)$ being infinity is a contradiction since $K^\star$ is compact.
	If \ref{item:Nstar3} holds, \cite[Lemma 1]{aubin.hs.viability}\footnote{The sets $X, K, C$ in \cite[Lemma 1]{aubin.hs.viability} are $C$, $K \cap C$, \chai{$D$ (or $\emptyset$ when applied for Theorem~\ref{thm:fi})}, respectively, in our context.}
	implies that $t \mapsto \phi(t,J)$ is either:
	\begin{itemize}[leftmargin = 4mm]
		\item defined over $[t_J, \infty)$ with values in $K\cap C$;\footnote{\chai{This is the only case that applies for Theorem~\ref{thm:fi}.}} or
		\item defined over $[t_J, T]$ with $T \geq t_J$, $\phi(T, J) \in D$ and $\phi(t, J) \in K \cap C$ for every $t \in [t_J,T]$.
	\end{itemize}
	In either case, we have a contradiction.
\end{proof}


The next example illustrates Theorem~\ref{thm:wfi}, Theorem~\ref{thm:fi} and Lemma~\ref{lem:Nstar}.
\begin{example}(weakly forward invariant set)\label{ex:wFI}
	Consider the hybrid system $\HS = \data$ in $\reals^2$ given by
	\begin{align*}
		&F(x):= (x_2, -x_1) & \forall x \in C;\\
		&G(x):= (-0.9x_1, x_2) &\forall x \in D,
	\end{align*}
	where $C := \{x \in \reals^2: |x| \leq 1, x_2 \geq 0\}$ and
	$D:= \{x \in \reals^2: x_1 \geq -1, x_2 = 0\}.$
	\ConfSp{-3mm}
	\begin{figure}[H]
		\centering
		\includegraphics[width=\figsize]{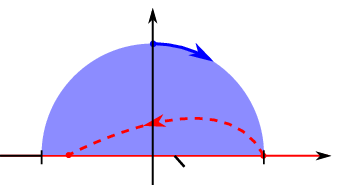}
		\small
		\put(-0.05,.12){$x_1$}
		\put(-.62,.5){$x_2$}
		\put(-.26,.05){1}
		\put(-.93,.05){-1}
		\put(-.6,.05){0}
		\put(-.7,.3){$C$}
		\put(-.45,.03){$D$}
		\caption{Sets and directions of flows/jumps in Example~\ref{ex:wFI}.}
		\label{fig:wFI}
	\end{figure}
	The set $K_1 = \partial C$ is weakly forward invariant for $\HS$ by Theorem~\ref{thm:wfi}.
	More precisely, for every $x \in K_1 \cap D$, $G(x) \in K_1$; and for every $x \in \partial (K_1 \cap C) \setminus (D \cup L) =$ $\{x \in \reals^2: |x| = 1, x_2 > 0\}$, since $\inner{\nabla( x^2_1 + x^2_2 - 1)}{F(x)} = 0$, applying item \ref{item:inner1} in Lemma~\ref{lem:innerTcone}, we have $F(x) \in T_{K_1 \cap C}(x)$.
	In addition, $K_1 \cap C = \partial C$ is compact, which implies condition \ref{item:N*} holds by Lemma~\ref{lem:star}.
	Thus, for every $x \in K_1$, there exists one complete solution that stays in $K_1$.
	For example, for every $x \in [-1,1] \times \{0\}$, there exists one complete solution that is discrete and stays in $K_1$ (from the origin there is also a complete continuous solution that remains at the origin), but also there exist maximal solutions that flow inside $\{x \in \reals^2: |x| < 1 \}$ and leave $K_1$.
	
	\chai{Now consider $K_2 = C$. It is forward invariant for $\HS$ by applying Theorem~\ref{thm:fi}.
	In fact, using the  observations above, item \ref{item:fi2} can be verified via Lemma~\ref{lem:innerTcone} since $\inner{\nabla( x^2_1 + x^2_2 - 1)}{F(x)} = 0$ for every $x \in \partial (K_2 \cap C) \setminus L = \{x \in \reals^2:  |x| = 1, x_2 > 0\} \cup ([-1,0]\times \{0\})$.}
	\hfill $\triangle$
\end{example}

Note that one can replace condition \ref{item:fi2} in Theorem~\ref{thm:fi} by
	\begin{enumerate}[label = \ref{thm:fi}.2$^{'}$), leftmargin= 12mm]
		\item\label{item:fi2'} For every $x \in \partial (K \cap C),$
	\end{enumerate}
	\begin{align}
		& F(x) \subset T_{K \cap C}(x) &\ifeq &x \notin \partial C \cap D\label{eq:fi2''}\\
		&F(x) \cap (T_C(x) \setminus T_{K \cap C}(x)) = \emptyset  &\ifeq &x \in \partial C \cap D. \label{eq:fi2''s}
	\end{align}
Note that assumption \eqref{eq:fi2''s} is important as in some cases, having item \eqref{eq:fi2''} only leads to solutions that escape the set $K$ by flowing as shown in the following example.
Consider the hybrid system $\HS$ on $\reals^2$ with
\begin{align*}
	&F(x) =(x_2, -\gamma) & \forall x &\in C := \{x \in \reals^2: x_1x_2 \geq 0 \}\\
	&G(x) = x & \forall x &\in D := \{x \in \reals^2: x_1 \geq 0, x_2 = 0 \},
\end{align*}
where $\gamma > 0$.
The set $K = \{x \in \reals^2: x_1 \geq 0, x_2 \geq 0 \}$ is weakly forward invariant, and the sets $K, C, D$ and the map $F$ satisfies \eqref{eq:fi2''}.
However, at the origin, we have $F(0) = (0, -\gamma)$ and
\ConfSp{-1mm}
\begin{align*}
	&T_C(0) = (\reals_{\geq 0} \times \reals_{\geq 0} ) \cup (\reals_{\leq 0} \times \reals_{\leq 0}),\\
	&T_{K \cap C}(0) = \reals_{\geq 0} \times \reals_{\geq 0}.
\end{align*}
\ConfSp{-5mm}

\noindent Hence, at the origin, one solution can flow into $C \setminus K$ (the third quadrant) because $F(0) \in T_C(0) \setminus T_{K \cap C}(0)$.

The following example is an application of Theorem~\ref{thm:fi} and Lemma~\ref{lem:Nstar}.
\begin{example}(forward invariant set)\label{ex:oscillator0w}
	Consider the hybrid system given by
	\IfConf{
		\begin{align}\label{eq:ex_zerod}
			\HS \begin{cases}
				x \in C &\dot{x} = F(x) := (- |x_1|x_2, 0)\\
				x \in D & x^+ = G(x) := x,
			\end{cases}
		\end{align}
		}{
	\begin{align}\label{eq:ex_zerod}
		\HS \begin{cases}
			x \in C &\dot{x} = F(x) := \begin{bmatrix}- |x_1|x_2 \\ 0
			\end{bmatrix} \\
			x \in D & x^+ = G(x) := x,
		\end{cases}
	\end{align}
}
	where the flow set is $C = \{x \in \reals^2:  |x| \leq 1, x_1x_2 \geq 0 \}$ and the jump set is $D = \{x \in \reals^2:  |x| \leq 1, x_1x_2 \leq 0 \}$.
	We observe that during flow, solutions evolve continuously within the unit circle centered at the origin; while at jumps, solutions remain at the current location.
	Applying Theorem~\ref{thm:fi}, we show that the set $K_1 = C_1 \cup D_1$ is forward invariant for $\HS,$ where $C_1 = \{x \in \reals^2: x_1 \geq 0, x_2 \geq 0, |x| \leq 1\}$ and $D_1 = \{x \in \reals^2: x_1 \leq 0, x_2 \geq 0, |x| \leq 1\}$.
	By construction, $K_1, C, D$ and $F$ satisfy Assumption~\ref{assum:data}. 
	Condition \ref{item:fi1} holds since $G$ maps the state to its current value.
	Condition \ref{item:fi2} holds since
	\begin{itemize}[leftmargin = 4mm]
		\item for every $x \in \{x \in  \partial C_1: |x| =1 \}$, since $x_1 x_2 \geq 0$,
		\IfConf{$\inner{\nabla (x_1^2 + x_2^2)}{F(x)} = -2|x_1|x_1x_2 \leq 0;$
			}{
		\begin{align*}
		\inner{\nabla (x_1^2 + x_2^2)}{F(x)} = -2|x_1|x_1x_2 \leq 0;
		\end{align*}
	}
		\item for every $x \in \{x \in  \partial C_1: |x| \neq 1 \}$, $F(x) = (0,0)$,
		which leads to $F(x) \in T_{K_1 \cap C}(x)$.
	\end{itemize}
	Finally, applying Lemma~\ref{lem:Nstar}, \ref{item:N*} holds since $K_1 \cap C$ is compact.
	\hfill $\triangle$
\end{example}

	In Example~\ref{ex:oscillator0w}, the set $K_1$ is forward invariant for $\HS$ as shown therein.
	When arbitrarily small disturbances are introduced, solution pairs may escape the set of interest.
	In particular, we revisit Example~\ref{ex:oscillator0w} with disturbances in the next section, and show that $K_1$ is only weakly forward invariant, uniformly in the given disturbances.
	The forthcoming results in Section~\ref{sec:HwSuffConds} are useful to verify such properties of sets for hybrid systems $\Hw$ given as in \eqref{eq:Hw}.

\subsection{Sufficient Conditions for Robust Forward Invariance Properties for $\Hw$}\label{sec:HwSuffConds}
As an extension to the nominal notions, the robust forward invariance notions for $\Hw$ in Definition~\ref{def:rwFI} - \ref{def:rFI} capture four types of forward invariance properties, some of which are uniform over disturbances $w$ for $\Hw$.
In this section, Theorem~\ref{thm:rwFI} and Theorem~\ref{thm:rFI} extend Theorem~\ref{thm:wfi} and Theorem~\ref{thm:fi} to hybrid systems $\Hw$ given in \eqref{eq:Hw}.
These results will be exploited in forward invariance-based control design for hybrid systems (with and without disturbances) in Part II of this work.
Similar to the results in Section~\ref{sec:HSuffConds}, throughout this section, the following version of Assumption~\ref{assum:data} with disturbances is assumed.
\begin{assumption}\label{assum:wdata}
	The sets $K, \Cw$, and $\Dw$ are such that $K\subset \overline{\Pwc{\Cw}}\cup \Pwd{\Dw}$ and that $K \cap \Pwc{\Cw}$ is closed.
	The map $\Fw$ is outer semicontinuous, locally bounded on $(K \times \Wc) \cap \Cw$, and $\Fw(x,w_c)$ is nonempty and convex for every $(x,w_c) \in (K \times \Wc) \cap \Cw$.
	For every $x \in \Pwc{\Cw}, 0 \in \wxc(x)$.
\end{assumption}

	Assumption~\ref{assum:wdata} guarantees that all points in the set to render invariant, namely, $K$, are either in the projections to the state space of $\Cw$ and $\Dw$, which is necessary for solutions from $K$ to exist.
	The closedness of the set $K \cap \Pwc{\Cw}$ and the regularity properties of $\Fw$ are required to obtain conditions in terms of the tangent cone; see, also, \cite[Proposition 6.10]{65}.
	The assumption of $0 \in \wxc(x)$ for every $x \in \Pwc{\Cw}$ usually holds for free since systems with disturbances, such as $\Hw$, typically reduce to the nominal system, in our case $\HS$, when the disturbances vanish.
	A similar property could be enforced for the disturbance $w_d$, but such an assumption is not needed in our results.

Next, we propose sufficient conditions to guarantee robust weak forward pre-invariance and robust weak forward invariance of a set for $\Hw$.
\begin{theorem}(sufficient conditions for robust weak forward (pre-) invariance of a set)\label{thm:rwFI}
	Given $\Hw = \dataw$ as in \eqref{eq:Hw} and a set $K\subset \reals^n$, suppose $\Cw, \Fw, \Dw$ and $K$ satisfy Assumption~\ref{assum:wdata}.
	The set $K$ is robustly weakly forward pre-invariant for $\Hw$ if the following conditions hold:
	\begin{enumerate}[label = \ref{thm:rwFI}.\arabic*), leftmargin=3.2\parindent]
		\item\label{item:rwFI1} For every $x \in K\cap\Pwd{\Dw}$, $\exists w_d \in \wxd(x)$ such that $\Gw(x, w_d) \cap K \neq \emptyset$;
		\item\label{item:rwFI2} For every $x \in \chai{\Pwc{\hCw}} \setminus \Pwd{\Dw}$, $\Fw(x, 0) \cap T_{K \cap \Pwc{\Cw}}(x)$ $\neq \emptyset$;
	\end{enumerate}
	where $\chai{\hCw} : = ((\partial (K\cap\Pwc{\Cw}) \times \Wc) \cap \Cw) \setminus \Lw$ and $\Lw : = \{(x, w_c) \in \Cw: x \in \partial \Pwc{\Cw}, \Fw(x,w_c) \cap T_{\overline{\Pwc{\Cw}}}(x) = \emptyset \}$.
	Moreover, $K$ is robustly weakly forward invariant for $\Hw$ if, in addition, \chai{$K \cap \Pwc{\Lw} \subset \Pwd{\Dw}$ and, with $\widetilde{K}^\star = ((K \setminus \Pwd{\Dw}) \times  \Wc)\cap \Cw$,}
	\begin{enumerate}[label = $\star$), leftmargin=6mm]
		\item\label{item:*} For every $(\phi, w)\in \sol_{\Hw}(\chai{\Pwc{\widetilde{K}^\star}})$ with $\rge \phi \subset K$, case \ref{item:b.2} in Proposition~\ref{prop:wexistence} does not hold.
	\end{enumerate}
\end{theorem}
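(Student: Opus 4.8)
The plan is to prove robust weak forward (pre-)invariance by, for each initial state $x \in K$, explicitly constructing a single solution pair $(\phi, w)$ that remains in $K$, distinguishing cases according to whether $x$ lies in $\Pwd{\Dw}$ or only in $\overline{\Pwc{\Cw}}$. This mirrors the construction in the proof of Proposition~\ref{prop:wexistence}. For the pre-invariance claim, the key is a concatenation argument: starting from any $x \in K$, I build the solution piece by piece, at each point choosing either a jump (if the point is in $K \cap \Pwd{\Dw}$) governed by condition \ref{item:rwFI1}, or a flow (if the point can only flow) governed by condition \ref{item:rwFI2}, and I must argue that each such choice keeps the state in $K$ and that the pieces patch together into a valid solution pair under Definition~\ref{def:HwSol}.

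First I would handle the jump direction, which is the easy half: condition \ref{item:rwFI1} gives, for each $x \in K \cap \Pwd{\Dw}$, a disturbance value $w_d \in \wxd(x)$ and a point in $\Gw(x,w_d) \cap K$, so a single jump keeps us in $K$. The flow direction is where the tangent-cone machinery enters. For $x \in K$ from which flow is forced, I need a flowing arc that stays in $K \cap \Pwc{\Cw}$. Condition \ref{item:rwFI2} supplies, at each relevant boundary point, a vector $v \in \Fw(x,0) \cap T_{K \cap \Pwc{\Cw}}(x)$; combined with the regularity from Assumption~\ref{assum:wdata} (outer semicontinuity, local boundedness, nonempty convex values of $\Fw(\cdot,0)$ on $(K\times\Wc)\cap\Cw$ together with $0 \in \wxc(x)$), I would invoke a Nagumo-type viability theorem for the differential inclusion $\dot z \in \Fw(z,0)$ relative to the closed set $K \cap \Pwc{\Cw}$ — precisely the role played by \cite[Proposition 6.10]{65} and the viability results in \cite{aubin.viability} — to produce an absolutely continuous viable arc $\widetilde z$ on some $[0,\varepsilon]$ with $\widetilde z(0)=x$ and $\widetilde z(t) \in K \cap \Pwc{\Cw}$. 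The set $\Lw$ is subtracted off exactly so that the tangent condition need only be checked where flow into $\overline{\Pwc{\Cw}}$ is genuinely possible, so points in $\Pwc{\Lw}$ cause no trouble for the existence of the viable arc. Setting the flow disturbance $w_c \equiv 0$ (legitimate since $0 \in \wxc(x)$) turns this arc into a solution pair piece to $\Hw$ lying in $K$.

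To assemble these pieces into one maximal solution pair from $x$ and conclude pre-invariance, I would use Zorn's lemma or a maximality argument on the set of viable solution pairs from $x$ ordered by truncation: the local constructions above show this set is nonempty and that no viable solution pair gets stuck at an interior-to-$K$ point from which continuation is impossible, hence a maximal viable element exists and has range in $K$. For robust weak forward invariance (the stronger, completeness-requiring notion), I would take the maximal viable pair just produced and rule out the three ways it could fail to be complete, as enumerated in Proposition~\ref{prop:wexistence}: ending with a jump to a point outside $\overline{\Pwc{\Cw}} \cup \Pwd{\Dw}$ is excluded because my jumps always land in $K \subset \overline{\Pwc{\Cw}} \cup \Pwd{\Dw}$; ending with flow at a boundary point is excluded because the hypothesis $K \cap \Pwc{\Lw} \subset \Pwd{\Dw}$ guarantees that wherever flow can no longer be continued, the state lies in $\Pwd{\Dw}$ and a jump is available to continue; and finite escape during flow, case \ref{item:b.2}, is excluded directly by hypothesis \ref{item:*}.

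The main obstacle is the flow-existence step: carefully justifying that condition \ref{item:rwFI2}, which is only a \emph{nonempty-intersection} tangency condition at \emph{boundary} points of $K \cap \Pwc{\Cw}$ with the undisturbed map $\Fw(\cdot,0)$, actually yields a viable flowing arc staying in $K \cap \Pwc{\Cw}$, and that the exclusion set $\Lw$ has been defined correctly so that this works uniformly. The delicate points are that tangency need only be verified on $\Pwc{\hCw}$ rather than on all of $K$ (interior points are handled automatically since $T_{K\cap\Pwc{\Cw}}(x)=\reals^n$ there, while points in $\Pwc{\Lw}$ either admit continuation elsewhere or trigger a jump), and that $\Fw(\cdot,0)$ inherits the Marchaud-type regularity needed to apply the viability theorem. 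Fixing $w_c \equiv 0$ reduces the viability problem to the nominal differential inclusion, so the robust statement rests on the same nominal viability kernel argument used for Theorem~\ref{thm:wfi}, which is why that theorem is stated as a corollary of this one.
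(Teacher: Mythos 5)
Your overall route is the paper's: restrict to zero flow-disturbance (legitimate by $0 \in \wxc(x)$), use condition \ref{item:rwFI1} to jump within $K$, use condition \ref{item:rwFI2} plus the viability result \cite[Proposition 6.10]{65} to produce a flowing arc in the closed set $K \cap \Pwc{\Cw}$, concatenate, and then invoke Proposition~\ref{prop:wexistence} together with $K \cap \Pwc{\Lw} \subset \Pwd{\Dw}$ and \ref{item:*} to get completeness. That matches the paper's proof step for step.

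There is, however, one genuine gap in the pre-invariance half. Definition~\ref{def:rwFI} quantifies over $\sol_{\Hw}(x)$, i.e., over \emph{maximal} solution pairs of $\Hw$. Your Zorn's-lemma argument produces a pair that is maximal \emph{within the family of viable (range-in-$K$) pairs}; that is not the same thing, since such a pair could a priori still be extendable by a piece that leaves $K$, in which case it is not an element of $\sol_{\Hw}(x)$ at all. Closing this requires the dichotomy the paper proves explicitly: if the maximal viable pair is not complete and its terminal point lies in $K\cap\Pwd{\Dw}$ or in $K \setminus (\Pwd{\Dw} \cup \Pwc{\Lw})$, then conditions \ref{item:rwFI1}/\ref{item:rwFI2} give a \emph{viable} extension, contradicting maximality in the viable family; otherwise the terminal point lies in $K \cap \Pwc{\Lw}\setminus\Pwd{\Dw}$ (or case \ref{item:b.2} occurred), where \emph{no} extension of any kind exists --- by the very definition of $\Lw$, $\Fw(x,w_c)\cap T_{\overline{\Pwc{\Cw}}}(x)=\emptyset$ for every admissible $w_c$, and no jump is available --- so the pair is in fact maximal for $\Hw$. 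Your sentence that ``points in $\Pwc{\Lw}$ either admit continuation elsewhere or trigger a jump'' is the opposite of what is needed here: for pre-invariance such points may admit neither, and they are harmless precisely because the only maximal solution pair from them is trivial and hence has range in $K$. (Under the extra hypothesis $K\cap\Pwc{\Lw}\subset\Pwd{\Dw}$ used for the non-``pre'' claim, these dead ends disappear, which is where your reading becomes correct.) The rest of your completeness argument is sound.
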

	
\begin{proof}
	Given $\Cw, \Fw, \Dw$ and $K$ satisfying Assumption~\ref{assum:wdata}, zero disturbance is always admissible to $\Hw$ during continuous evolution of solution pairs.
	We define a restriction of $\Hw$ by $K$ with zero disturbance during flows as follows:
	$\wHS\w = (\wC, \wF, \wD\w, \Gw)$,
	where $\wC := K \cap \Pwc{\Cw}$, $\wF(x) = \Fw(x, 0)$ for every $x \in \Pwc{\Cw}$ and $\wD\w := (K \times \Wd)\cap \Dw$.
	Since $K \subset \overline{\Pwc{\Cw}} \cup \Pwd{\Dw}$, by Definition~\ref{def:HwSol}, there exists a solution pair to $\wHS\w$ from every $\xi \in K$.
	Let $K_1 = \Pwd{\wD\w}$,
	\chai{$K_2 = K \setminus (\Pwd{\wD\w} \cup \Pwc{\Lw})$ and $K_3 = K \setminus (K_1 \cup K_2)$.
	By definition, every $\xi \in K_3$ is such that $\xi \in \Pwc{\Lw}\setminus \Pwd{\wD\w}$ and $\wF(\xi)\cap T_{\overline{\Pwc{\Cw}}}(\xi) =\emptyset$.
	Then, item (a) in \cite[Lemma 5.26]{65} and Definition~\ref{def:solution} imply there is only trivial solution from $\xi$ to $\wHS\w$, in which case we have $\rge \phi \subset K$.
	Otherwise, in the case where $\phi(0,0) \in K_1 \cup K_2$, we show there exists $(\phi, w) \in \sol_{\wHS\w}$ that is nontrivial and it has $\rge \phi \subset K$ when \ref{item:rwFI1} and \ref{item:rwFI2} hold true.
	To this end, we construct a nontrivial solution pair from every $\xi \in K_1 \cup K_2$.}
	Since $K_1$ and $K_2$ are disjoint sets, we have following two cases:
	\begin{enumerate}[label = \roman*), leftmargin = 4mm]
		\item\label{item:rwFIi} when $\xi \in K_1$: since $K_1 \subset \Pwd{\Dw}$, a jump is possible from every $\xi \in K_1$, i.e., from every $(\xi, w_d) \in \widetilde{D}\w$.
		Let $\phi_a(0,0) = \xi$.
		By condition \ref{item:rwFI1}, there exists $\widetilde{w}_d \in \wxd(\xi), \phi_a(0,1) \in \Gw(\xi, \widetilde{w}_d)$, such that $\phi_a(0,1) \in K$.
		\item\label{item:rwFIii} when $\xi \in K_2$:
		since $K \subset \overline{\Pwc{\Cw}} \cup \Pwd{\Dw}$, $\xi \in \overline{\Pwc{\Cw}}\setminus \Pwd{\Dw}$ and solution pairs can only evolve by flowing from $\xi$.
		Conditions enforced by Assumption~\ref{assum:wdata} imply that $\wC$ is closed, $\wF$ is outer semicontinuous, locally bounded and convex valued on $\wC$.
		\chai{Since $T_{\wC}(x) = \reals^n$ for every $x \in (\inter \wC)\setminus (\Pwd{\wD\w} \cup \Pwc{\Lw})$, item \ref{item:rwFI2} implies that $\widetilde{F}(x) \cap T_{\widetilde{C}}(x) \neq \emptyset$ for every $x \in K_2$.
		Then, by an application of \cite[Proposition 6.10]{65}, there exists a nontrivial solution $\phi_b$ to $\wHS\w$ from every $\xi \in K_2$.
		By item \ref{item:S1} in Definition~\ref{def:solution}, such a nontrivial solution $\phi_b$ is absolutely continuous on $[0,\varepsilon]$, for some $ \varepsilon > 0$, with $\phi_b(0) = \xi$, $\dot{\phi}_b(t) \in \wF(\phi_b(t))$ for almost all $t \in [0,\varepsilon]$ and $\phi_b(t) \in \wC$ for all $t \in (0,\varepsilon]$.
		By closedness of $\wC$, we have $\phi_b(t,0) \in K$ for every $t \in [0,\varepsilon]$.}
	\end{enumerate}
	The above shows that from every point in $K_1$, solution pairs to $\wHS\w$ can be extended via jumps with the state component staying within $K$ using the construction in case \ref{item:rwFIi}.
	While from points in $K_2$, solution pairs can be extended using the construction in case \ref{item:rwFIii} with the state component staying within $K$.
	As a consequence, from every point in $K$, there exists at least one $(\widetilde{\phi}, \widetilde{w}) \in \sol_{\wHS\w}$ with $\rge\widetilde{\phi} \subset K$.
	
	Next, we prove that each such $(\widetilde{\phi}, \widetilde{w})$ is also maximal to $\Hw$.\footnote{During flows, we have $(\widetilde{\phi}, 0)$.}
	If $(\widetilde{\phi}, \widetilde{w})$ is complete, then it is already maximal and a solution pair to $\Hw$.
	Consider the case that $(\widetilde{\phi}, \widetilde{w})$ is not complete.
	Proceeding by contradiction, suppose $(\widetilde{\phi}, \widetilde{w})$ is not maximal for $\Hw$, meaning that there exists $(\phi, w)$ such that $\phi(t,j) =   \widetilde{\phi}(t,j)$ and $w(t,j) = \widetilde{w}(t,j)$ for every $(t,j)\in \dom \widetilde{\phi}$ and $\dom \phi \setminus \dom \widetilde{\phi} \neq \emptyset$.
	Let $(T,J) = \sup \dom \widetilde{\phi}$.
	If $(T,J) \in \dom \widetilde{\phi}$, then, $\widetilde{\phi}(T,J) \in K$ and we have the two following cases:
	\chai{
	\begin{itemize}[leftmargin=4mm]
		\item $\widetilde{\phi}(T,J) \in K_1 \cup K_2$, \ref{item:rwFI1} and closeness of $\wC$ imply that, using the arguments in \ref{item:rwFIi}  and \ref{item:rwFIii}  above, it is possible for $\phi$ to satisfy $\phi(t,j) \in K$ for some $(t,j) \in \dom \phi \setminus \dom \widetilde{\phi}$.
		By definition of solution pairs, this contradicts with maximality of $(\widetilde{\phi}, \widetilde{w})$ for $\wHS\w$.
		\item $\widetilde{\phi}(T,J) \in K_3$, by definition of $\Lw$, $\Fw(\widetilde{\phi}(T,J), w_c) \cap T_{\overline{\Pwc{\Cw}}}(\widetilde{\phi}(T,J)) =\emptyset$ for every $w_c \in \wxc{(\widetilde{\phi}(T,J))}$.
		Hence, $\sup \dom \phi = (T,J)$, which contradicts with the assumption $\dom \phi \setminus \dom \widetilde{\phi} \neq \emptyset$.
	\end{itemize}
}
	If $(T,J) \notin \dom \widetilde{\phi}$, according to Proposition~\ref{prop:wexistence}, only \ref{item:b.2} holds.\footnote{Case \ref{item:a} does not hold due to $(\widetilde{\phi}, \widetilde{w})$ not being complete, while \ref{item:b.1} and \ref{item:c} do not hold because $(T,J) \not\in \dom \widetilde{\phi}$.}
	In such a case, there is no function $z : \overline{I^J} \rightarrow \reals^n$ satisfying the conditions in \ref{item:b.2} of Proposition~\ref{prop:wexistence}, which are needed to have a $(\phi, w)$ such that $\dom \phi\setminus \dom \widetilde{\phi} \neq \emptyset$.
	Thus, $K$ is robustly weakly forward pre-invariant for $\Hw$.
	
	The last claim requires to show that among these maximal solution pairs to $\Hw$ that stay in $K$ for all future time, there exist one complete solution pair from every point in $K$ when, in addition, $(K \cap \Pwc{\Lw}) \subset \Pwd{\Dw}$ and item \ref{item:*} hold.
	To this end, \chai{first, note that the existence of a nontrivial solution pair to $\Hw$ from every $x \in K$ follows from $(K \cap \Pwc{\Lw}) \subset \Pwd{\Dw}$, which implies $K_3 = \emptyset.$}
	Then, we apply Proposition~\ref{prop:wexistence} to complete the proof.
	Proceeding by contradiction, given any $\xi \in K$, suppose every $(\phi^*,w^*)\in \sol_{\Hw}(\xi)$ is not complete, i.e., $(T,J) = \sup \dom \phi^*, T + J < \infty$, and case \ref{item:a} in Proposition~\ref{prop:wexistence} does not hold.
	Such a solution pair $(\phi^*, w^*)$ is not as described in case \ref{item:b.1.1} in Proposition~\ref{prop:wexistence} due to the closeness of $K \cap \Pwc{\Cw}$.
	Case \ref{item:c.1} does not hold for $(\phi^*, w^*)$ either, since $\rge \phi^* \subset K$ and $K \subset \overline{\Pwc{\Cw}} \cup \Pwd{\Dw}$.
	Thus, by Proposition~\ref{prop:wexistence}, $(\phi^*, w^*)$ can only end as described by case \ref{item:b.1.2}, \ref{item:b.2} or \ref{item:c.2}.
	\begin{itemize}[leftmargin= 4mm]
		\item The solution pair ends because the functions described in case \ref{item:b.1.2} or \ref{item:c.2} of Proposition~\ref{prop:wexistence}, i.e., $\widetilde{z}$ does not exist for $(\phi^*(T,J), w^*(T,J))$.
		However, using the same argument in item \ref{item:rwFIii} above with $\widetilde{w}_c \equiv 0$, for every $(x, 0) \in K_1 \times 0$ there exists $\widetilde{z}$ such that \ref{item:b.1.2} holds, which leads to a contradiction.
		\item \chai{If $(\phi^*, w^*)$ is as described by case \ref{item:b.2}, $\phi^*(0,0) \notin \Pwc{\widetilde{K}^\star}$ by assumption \ref{item:*}.
		More precisely, $\phi^*(0,0) \in K_1$, hence, the solution pair can be extended following the same construction in \ref{item:rwFIi} above, which contradicts with the maximality of $(\phi^*, w^*)$.}
	\end{itemize}
\vspace{-5mm}
\end{proof}

Condition \ref{item:rwFI1} in Theorem~\ref{thm:rwFI} guarantees that for every $x\in K \cap \Pwd{\Dw}$ \chai{such that} there exists $w_d \in \wxd(x)$, the jump map contains an element that also belongs to $K$.
Under the stated assumptions, condition \ref{item:rwFI2} implies the satisfaction of \ref{item:VCw} with zero disturbance $w_c$, which suffices for the purpose of Theorem~\ref{thm:rwFI} as it is about weak forward invariance notions.
While involving the tangent cone of $K \cap \Pwc{\Cw}$ in condition \ref{item:rwFI2} is natural, such solution property is more than needed for robust weak forward pre-invariance of $K$ as defined in Definition~\ref{def:rwFI}.
\chai{
	Similarly to Lemma~\ref{lem:Nstar}, solution-independent conditions that imply \ref{item:*} are derived for the disturbance case.
	\begin{lemma}(sufficient conditions for completeness)\label{lem:star}
		Given $\Hw = \dataw$ and a set $K\subset \reals^n$, suppose $K, \Cw, \Dw,$ and $\Fw$ satisfy Assumption~\ref{assum:wdata}, set $\Pwd{\Dw}$ is closed and item \ref{item:rwFI2} in Theorem~\ref{thm:rwFI} holds.
		Condition \ref{item:*} in Theorem~\ref{thm:rwFI} holds if
		\begin{enumerate}[label = \ref{lem:star}.\arabic*), leftmargin=3\parindent]
			\item\label{item:star1} $\widetilde{K}^\star$ is compact; or
			\item\label{item:star3} $\Fw$ has linear growth on $\widetilde{K}^\star$.
		\end{enumerate}
	\end{lemma}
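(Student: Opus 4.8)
The plan is to follow the strategy of the nominal counterpart, Lemma~\ref{lem:Nstar}, adapting each step to the disturbance $w_c$ entering the flow map. Proceeding by contradiction, I would take some $(\phi,w) \in \sol_{\Hw}(\Pwc{\widetilde{K}^\star})$ with $\rge\phi \subset K$ for which case \ref{item:b.2} of Proposition~\ref{prop:wexistence} holds (the hypothesis that \ref{item:rwFI2} holds guarantees this class is the relevant one, since it is what supplies the continuing flows). Writing $(T,J) = \sup\dom(\phi,w)$, case \ref{item:b.2} says the terminal interval $I^J = [t_J,T)$ is open to the right, $T+J < \infty$, and $(T,J) \notin \dom(\phi,w)$ precisely because no absolutely continuous extension of the flow exists. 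In particular, $t \mapsto \phi(t,J)$ is locally absolutely continuous on $I^J$ and, by item \ref{item:Sw1} of Definition~\ref{def:HwSol}, satisfies $(\phi(t,J), w_c(t,J)) \in \Cw$ and $\dot{\phi}(t,J) \in \Fw(\phi(t,J), w_c(t,J))$ for almost every $t \in I^J$, with $w_c(t,J) \in \wxc(\phi(t,J))$.

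The first step is a limit dichotomy: since $t \mapsto \phi(t,J)$ is locally absolutely continuous on $[t_J,T)$, the limit $\lim_{t \to T}\phi(t,J)$ is either finite or satisfies $\lim_{t \to T}|\phi(t,J)| = \infty$. If it were finite, the flow, together with the measurable and locally essentially bounded $w_c(\cdot,J)$, could be continued to the closed interval $\overline{I^J} = [t_J,T]$, contradicting the defining feature of case \ref{item:b.2}, namely the nonexistence of such an extension. Hence necessarily $\lim_{t \to T}|\phi(t,J)| = \infty$, and the rest of the argument rules this out under each of \ref{item:star1} and \ref{item:star3}.

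To this end, note that $w_c(t,J) \in \wxc(\phi(t,J))$ makes $t \mapsto \phi(t,J)$ a solution of the autonomous inclusion $\dot{x} \in \hat{F}(x)$, where $\hat{F}(x) := \overline{\mathrm{co}}\,\Fw(x, \wxc(x))$, since $\dot{\phi}(t,J) \in \Fw(\phi(t,J),w_c(t,J)) \subset \hat{F}(\phi(t,J))$. Under \ref{item:star1}, compactness of $\widetilde{K}^\star$ yields a uniform bound $|\Fw(x,w_c)| \le M$ on the relevant set, so $|\dot{\phi}(t,J)| \le M$ almost everywhere on the finite interval $[t_J,T)$; this forces $\lim_{t \to T}\phi(t,J)$ to be finite, contradicting $\lim_{t \to T}|\phi(t,J)| = \infty$. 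Under \ref{item:star3}, linear growth of $\Fw$ on $\widetilde{K}^\star$ transfers to $\hat{F}$, and invoking \cite[Lemma 1]{aubin.hs.viability}, with the roles of $X$, $K$, $C$ there played by $\Pwc{\Cw}$, $K \cap \Pwc{\Cw}$, and $\Pwd{\Dw}$ (closed by hypothesis), forces the maximal terminal flow to be either complete or to reach $\Pwd{\Dw}$ in finite time at a finite state; either outcome contradicts case \ref{item:b.2}. In both cases case \ref{item:b.2} is excluded, which is exactly condition \ref{item:*}.

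The step I expect to be the main obstacle is confining the escaping flow to the region where the growth estimate is available. Because the hybrid inclusions framework permits $\Pwc{\Cw}$ and $\Pwd{\Dw}$ to overlap, the terminal flow $\phi(\cdot,J)$, while contained in $K \cap \Pwc{\Cw}$, need not avoid $\Pwd{\Dw}$, so I must ensure that the compactness bound, respectively the linear-growth bound on $\widetilde{K}^\star = ((K \setminus \Pwd{\Dw})\times\Wc)\cap\Cw$, in fact controls $\Fw$ along the \emph{entire} flow, and that the regularization $\hat{F}$ inherits the outer semicontinuity, convexity, and growth hypotheses needed to apply \cite[Lemma 1]{aubin.hs.viability} uniformly over the admissible disturbances $w_c$. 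Verifying these uniform-in-$w_c$ bounds, and reconciling the set on which growth is assumed with the set on which the cited lemma requires it, is the technical crux; the hybrid-time bookkeeping and the limit dichotomy are otherwise routine.
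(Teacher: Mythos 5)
Your argument is correct and is essentially the proof the paper intends: the paper in fact gives no explicit proof of Lemma~\ref{lem:star}, stating only that it is obtained ``similarly to Lemma~\ref{lem:Nstar}'', and your proposal is precisely that adaptation --- the limit dichotomy (finite limit contradicts the non-extendability in case \ref{item:b.2}, so the flow must blow up), ruled out under \ref{item:star1} by boundedness of $\Fw$ on the compact set and under \ref{item:star3} by \cite[Lemma 1]{aubin.hs.viability} with $X$, $K$, $C$ there instantiated as the projections of the flow data and $\Pwd{\Dw}$. The technical caveat you raise about the flow possibly entering $\Pwd{\Dw}$ while the growth bound is only assumed on $\widetilde{K}^\star$ is equally present in the paper's own treatment of Lemma~\ref{lem:Nstar}, so it is not a gap relative to the paper's argument.
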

}

The following example illustrates Theorem~\ref{thm:rwFI}.
\begin{example}[robustly weakly forward invariant set] \label{ex:oscillator}
	Consider a variation of hybrid system $\HS$ in Example~\ref{ex:oscillator0w} with disturbances given by
	\IfConf{
	\begin{align}\label{eq:ex_d}
		\Hw \begin{cases}
			(x, w_c) \in \Cw &
			\dot{x} = \Fw(x, w_c) := (-|x_1| x_2, w_c |x_1|x_1)\\
			(x, w_d) \in \Dw & x^+ \in \Gw(x, w_d) :=\\
			&\qquad  \{R(\theta)x : \theta \in [w_d, -w_d] \},
		\end{cases}
	\end{align}

\vspace{-3mm}
	}{
	\begin{align}\label{eq:ex_d}
		\Hw \begin{cases}
			(x, w_c) \in \Cw &
			\dot{x} = \Fw(x, w_c) := |x_1|\begin{bmatrix}- x_2 \\ w_c x_1
			\end{bmatrix} \\
			(x, w_d) \in \Dw & x^+ \in \Gw(x, w_d) := \{R(\theta)x : \theta \in [w_d, -w_d] \},
			\end{cases}
	\end{align}
}
	\noindent where $R(\theta) : =
	\begin{bmatrix}
	\cos \theta & \sin \theta\\
	-\sin \theta & \cos \theta
	\end{bmatrix}$ is a rotation matrix, 
	$\Cw:= \{(x, w_c) \in \reals^2 \times \reals: 0 \leq w_c \leq |x| \leq 1, x_1x_2 \geq 0\}$, and $\Dw:= \{ (x, w_d) \in \reals^2 \times \reals: x_1x_2 \leq 0, |x| \leq 1, -\frac{\pi}{4} \leq w_d \leq 0\}$.
	As shown in Figure~\ref{fig:oscillator}, the projections of $\Cw$ and $\Dw$ onto $\reals^2$ are given by $\Pwc{\Cw} = C_1 \cup C_2$ on $\reals^2$ with $C_2 = \{x \in \reals^2: x_1 \leq 0, x_2 \leq 0, |x| \leq 1\}$ and by $\Pwd{\Dw} = D_1 \cup D_2$ with $D_2 = \{x \in \reals^2: x_1 \geq 0, x_2 \leq 0, |x| \leq 1\}$, respectively.\footnote{We use the same definitions for $K_1$, $C_1$, and $D_1$ as in Example~\ref{ex:oscillator0w}.}
	Based on provided dynamics, solutions travel counter-clockwise during flows, while they either rotate clockwise or counter-clockwise during jumps.
	As a result, solutions can evolve in any of the four quadrants in $\reals^2$, either by flow or jump.
	\IfConf{
	\vspace{-4mm}
		\begin{figure}[H]
			\centering
			\setlength{\unitlength}{.3\textwidth}
			\includegraphics[width = \unitlength]{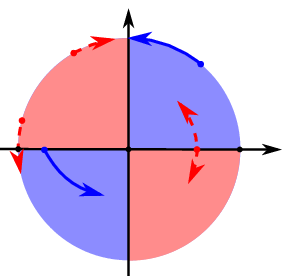}
			\put(-0.1,0.5){$x_1$}
			\put(-0.5,0.9){$x_2$}
			\put(-0.47,0.7){$C_1$}
			\put(-0.7,0.12){$C_2$}
			\put(-0.7,0.7){$D_1$}
			\put(-0.47,0.12){$D_2$}
			\put(-0.52, 0.39){0}
			\put(-0.13, 0.39){1}
			\put(-1.02, 0.39){-1}
			\put(-1, 0.55){$\xi_1$}
			\put(-0.84, 0.49){$\xi_2$}
			\caption{Projection onto the state space of flow and jump sets of the system in Example~\ref{ex:oscillator}. The blue solid arrows indicate possible hybrid arcs during flow, while the red dashed arrows indicate possible hybrid arcs during jumps.}
			\label{fig:oscillator}
		\end{figure}
		
\vspace{-3mm}
	}{
	\begin{figure}[h]
		\centering
		\includegraphics[width = \figsize]{oscillator.eps}
		\put(-0.1,0.5){$x_1$}
		\put(-0.5,0.9){$x_2$}
		\put(-0.47,0.7){$C_1$}
		\put(-0.7,0.12){$C_2$}
		\put(-0.7,0.7){$D_1$}
		\put(-0.47,0.12){$D_2$}
		\put(-0.52, 0.39){0}
		\put(-0.13, 0.39){1}
		\put(-1.02, 0.39){-1}
		\put(-1, 0.55){$\xi_1$}
		\put(-0.84, 0.49){$\xi_2$}
		\caption{Projection onto the state space of flow and jump sets of the system in Example~\ref{ex:oscillator}. The blue solid arrows indicate possible hybrid arcs during flow, while the red dashed arrows indicate possible hybrid arcs during jumps.}
		\label{fig:oscillator}
	\end{figure}
			}
	First, we apply Theorem~\ref{thm:rwFI} to conclude robust weak forward invariance of the set $K_1 = C_1 \cup D_1$ for $\Hw$.
	Assumption~\ref{assum:wdata} holds for $K_1, \Cw, \Dw$ and $\Fw$ by construction.
	Since the set $\Lw$ is empty, condition \ref{item:rwFI1} holds since for every $(x, w_d) \in (K_1 \times \Wd)\cap \Dw$, the selection $x^+ = R(0)x$ always results in $x^+ \in K_1$.
	Condition \ref{item:rwFI2} holds since, applying item \ref{item:inner1} in Lemma~\ref{lem:innerTcone}, for every $x \in \partial C_1 \setminus \Pwd{\Dw}$, since $x_1x_2 \leq 0$, we have
		\IfConf{
		$$\inner{\nabla (x_1^2 + x_2^2)}{\Fw(x, 0)} = -2x_1x_2|x_1| \leq 0.$$
	}{
		\begin{align*}
			\inner{\nabla (x_1^2 + x_2^2 - 1)}{\Fw(x, 0)} &= 2x_1(-x_2|x_1|) + 2x_2(w_cx_1|x_1|)\\
			& = -2x_1x_2|x_1| \leq 0.
		\end{align*}
	}
	Then, the robust weak forward invariance of $K_1$ follows from \ref{item:star3} in Lemma~\ref{lem:star} and Theorem~\ref{thm:rwFI}.
	Note that the property is weak due to the following observations:
	\begin{itemize}[leftmargin = 4mm]
		\item Because of the set-valuedness of the map $\Gw$, there exists a solution pair from a point $\xi_1 \in D_1$ that jumps to a point in $C_2$ that is not in $K_1$, as depicted in Figure~\ref{fig:oscillator}.
		On the other hand, from the same point $\xi_1$, there exists a solution pair that keeps jumping from and to $\xi_1$, and stays within $D_1 \subset K_1$;
		\item Because of the overlap between $\Pwc{\Cw}$ and $\Pwd{\Dw}$, there exists a solution pair that starts from a point $\xi_2 \in D_1$ and flows to a point in $C_2$ that is not in $K_1$, as depicted in Figure~\ref{fig:oscillator}.
		On the other hand, the solution pair that jumps from and to $\xi_2$ from $\xi_2$ stays within $D_1 \subset K_1$. \hfill $\triangle$
	\end{itemize}
\end{example}

To derive a set of sufficient conditions guaranteeing the stronger robust forward invariance property of $K$, i.e., every solution pair to $\Hw$ is such that its state component stays within the set $K$, when starting from $K$, we require the disturbances $w$ and the set $K$ to satisfy the following assumption.
\begin{assumption}\label{assum:wbound}
	For every $\xi \in (\partial K) \cap \Pwc{\Cw}$, there exists a neighborhood $U$ of $\xi$ such that $\wxc(x) \subset \wxc(\xi)$ for every $x \in U \cap \Pwc{\Cw}$.
\end{assumption}
The next result provides conditions implying robust forward pre-invariance and robust forward invariance of a set for $\Hw$.
\begin{theorem}(sufficient conditions for robust forward \ (pre-) invariance of a set)\label{thm:rFI}
	Given $\Hw = \dataw$ as in \eqref{eq:Hw} and a set $K\subset \reals^n$, suppose $\Cw, \Fw, \Dw$ and $K$ satisfy Assumption~\ref{assum:wdata}.
	Furthermore, suppose the mapping $x \mapsto \Fw(x, w_c)$ is locally Lipschitz uniformly in $w_c$ on $\Cw$.
	The set $K$ is robustly forward pre-invariant for $\Hw$ if the following conditions hold:
	\begin{enumerate}[label = \ref{thm:rFI}.\arabic*), leftmargin=3.2\parindent]
		\item\label{item:rFI1} For every $(x, w_d) \in (K \times \Wd)\cap \Dw, \Gw(x, w_d) \subset K$;
		\item\label{item:rFI2} For every $(x, w_c) \in \chai{\hCw}$, $\Fw(x,w_c) \subset T_{K\cap \Pwc{\Cw}}(x)$,
	\end{enumerate}
	where $\hCw$ and $\Lw$ be given as in Theorem~\ref{thm:rwFI}.
	Moreover, $K$ is robustly forward invariant for $\Hw$ if, in addition, \chai{$K \cap \Pwc{\Lw} \subset \Pwd{\Dw}$ and, with $\widetilde{K}^\star = (K \cap \Pwc{\Cw})\times  \Wc)\cap \Cw$,} condition \ref{item:*} in Theorem~\ref{thm:rwFI} holds.
\end{theorem}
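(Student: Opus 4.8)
The plan is to establish robust forward pre-invariance first, and then upgrade to robust forward invariance by adjoining completeness exactly as in the final part of the proof of Theorem~\ref{thm:rwFI}. For pre-invariance I would take an arbitrary $(\phi, w) \in \sol_{\Hw}(K)$ and show $\rge \phi \subset K$ by excluding any first escape from $K$. Since a hybrid arc alternates intervals of flow with jumps, and since $K \cap \Pwc{\Cw}$ is closed, a first exit is well defined and must occur through one of two mechanisms: a jump out of $K$, or a flow out of $K$ on some interval $I^j$. I would treat these separately, noting that conditions \ref{item:rFI1} and \ref{item:rFI2} are precisely the ``subset'' strengthenings of the ``nonempty intersection'' conditions \ref{item:rwFI1} and \ref{item:rwFI2}, which is what forces \emph{every} solution pair, rather than just one, to remain in $K$.

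The jump case is immediate from \ref{item:rFI1}. If $\phi(t,j) \in K$ and $(t, j+1) \in \dom \phi$, then \ref{item:Sw2} in Definition~\ref{def:HwSol} gives $(\phi(t,j), w_d(t,j)) \in \Dw$, hence $(\phi(t,j), w_d(t,j)) \in (K \times \Wd) \cap \Dw$; thus $\Gw(\phi(t,j), w_d(t,j)) \subset K$, so $\phi(t, j+1) \in K$. No jump can leave $K$, regardless of $w_d$.

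The flow case is the core. On an interval $I^j$ of flow one has $(\phi(t,j), w_c(t,j)) \in \Cw$ for $t \in \inter I^j$, so $\phi(t,j) \in \Pwc{\Cw}$, and staying in $K$ during flow is equivalent to staying in the closed set $K \cap \Pwc{\Cw}$. I would show that this set is strongly invariant for the time-varying differential inclusion $\dot{x} \in \Fw(x, w_c(t,j))$ induced along the solution, via a Bony--Brezis / Gronwall-type estimate on $t \mapsto |\phi(t,j)|_{K \cap \Pwc{\Cw}}$. The two ingredients that make this work are the uniform-in-$w_c$ local Lipschitzness of $x \mapsto \Fw(x,w_c)$ and the tangent-cone inclusion \ref{item:rFI2}, $\Fw(x,w_c) \subset T_{K \cap \Pwc{\Cw}}(x)$ on $\hCw$: at boundary points of $K \cap \Pwc{\Cw}$ outside $\Lw$ every admissible flow direction is subtangential, so the distance cannot increase, while the Lipschitz bound controls its growth rate and upgrades the one-solution conclusion of Theorem~\ref{thm:rwFI} to an all-solutions conclusion. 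Boundary points lying on $\partial \Pwc{\Cw}$ with $(x,w_c) \in \Lw$ are harmless because no flow can proceed from them (this is exactly why $\Lw$ is removed in the definition of $\hCw$), so a solution cannot exit $K$ through them by flowing.

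The main obstacle is making this flow argument uniform over the disturbance. The inclusion \ref{item:rFI2} is checked at a boundary point $x$ only for the admissible values $w_c \in \wxc(x)$, whereas a solution approaching $\partial(K \cap \Pwc{\Cw})$ may carry a disturbance $w_c(t,j)$ attached to a nearby interior point; closing this gap is precisely the role of Assumption~\ref{assum:wbound}, which gives $\wxc(x) \subset \wxc(\xi)$ on a neighborhood of each $\xi \in (\partial K) \cap \Pwc{\Cw}$, so the disturbance experienced near the boundary is among those for which subtangentiality has been verified and the distance estimate goes through. Finally, for robust forward invariance I would adjoin completeness as in Theorem~\ref{thm:rwFI}: with $K \cap \Pwc{\Lw} \subset \Pwd{\Dw}$ a jump is available wherever flow stops, so by closedness of $K \cap \Pwc{\Cw}$, $K \subset \overline{\Pwc{\Cw}} \cup \Pwd{\Dw}$, and \ref{item:rFI1} the escaping alternatives \ref{item:b.1.1}, \ref{item:b.1.2}, \ref{item:c.1}, and \ref{item:c.2} of Proposition~\ref{prop:wexistence} are all ruled out, while condition \ref{item:*} excludes finite-time escape during flow, case \ref{item:b.2}; only case \ref{item:a}, completeness, survives for every maximal solution pair from $K$.
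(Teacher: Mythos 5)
Your proposal is correct and follows essentially the same route as the paper's proof: jumps are handled directly by \ref{item:rFI1}, flow escape is excluded by a Gronwall estimate on $t \mapsto |\phi(t,j)|_{K\cap\Pwc{\Cw}}$ combining the tangent-cone inclusion \ref{item:rFI2}, the uniform-in-$w_c$ local Lipschitzness of $\Fw$, and Assumption~\ref{assum:wbound} to match the disturbances seen near the boundary with those admissible at the projected boundary point, and completeness is adjoined via Proposition~\ref{prop:wexistence} with condition \ref{item:*} ruling out case \ref{item:b.2}. The only cosmetic difference is that the paper first records that \ref{item:rFI1}--\ref{item:rFI2} imply \ref{item:rwFI1}--\ref{item:rwFI2} and invokes Theorem~\ref{thm:rwFI} to get existence of nontrivial solution pairs, which you obtain equivalently from $K \cap \Pwc{\Lw} \subset \Pwd{\Dw}$.
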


\begin{proof}
	Since condition \ref{item:rFI1} and \ref{item:rFI2} imply condition \ref{item:rwFI1} and \ref{item:rwFI2}, respectively, under given conditions, which include the fact that $\Cw, \Fw, \Dw$ and $K$ satisfy Assumption~\ref{assum:wdata}, the set $K$ is robustly weakly forward pre-invariant for $\Hw$ by Theorem~\ref{thm:rwFI}.

	Now we show that every $(\phi, w) \in \sol_{\Hw}(K)$ has $\rge \phi \subset K$.
	Proceeding by contradiction, suppose there exists a solution pair $(\phi, w) \in \sol_{\Hw}(K)$ such that $\rge \phi \setminus K \neq \emptyset$.
	Then, there exists $(t^*,j^*) \in \dom \phi$ such that $\phi(t^*,j^*) \not \in K$, i.e., $\phi$ eventually leaves $K$ in finite hybrid time.\footnote{Note that when $\rge \phi \subset K$ and $\lim\limits_{t+j \to \sup_t \dom \phi + \sup_j \dom \phi} \phi(t,j) = \infty$ (that is, $\phi$ stays in $K$ but escapes to infinity, potentially in finite hybrid time) corresponds to a solution that satisfies the definition of forward invariance for $K$.}
	Then, we have the two following cases:
	\begin{enumerate}[label = \roman*), leftmargin = 4mm]
		\item\label{item:rFIi} In the case that $\phi$ ``left $K$ by jumping," namely, $\phi(t,j) \in K$ for all $(t,j) \in \dom\phi$ with $t+j < t^* +j^*$, $(\phi(t^*,j^*-1), w_d) \in \Dw$ with $\phi(t^*,j^*) \notin K$ for some $w_d \in \wxd(\phi(t^*,j^*-1))$.
		This contradicts item \ref{item:rFI1}.
		More precisely, since $\phi(t^*,j^*-1) \in K \cap \Pwd{\Dw}$, item \ref{item:rFI1} implies that $\phi(t^*, j^*) \in \Gw(\phi(t^*,j^*-1), w_d(t^*,j^*-1)) \subset K$ for every $w_d \in \wxd(\phi(t^*, j^*-1))$.
		Thus, $\phi$ did not leave $K$ by jumping.
		Then, it must be the case that $\phi$ left $K$ by flowing, which is treated in the next item.
		\item\label{item:rFIii} In the case that $\phi$ ``left $K$ by flowing," namely, there exists a hybrid time instant $(\tau^*,j^*) \in \dom \phi$ such that $\phi(t,j^*) \in \overline{\Pwc{\Cw}}\setminus K$ for all $t \in (\tau^*,t^*]$ and $t^* - \tau^*$ is arbitrarily small and positive.
		Moreover, by closedness of $K\cap \Pwc{\Cw}$, we suppose that $\phi(\tau^*,j^*) \in (\partial K) \cap \Pwc{\Cw}$.\footnote{By definition of solution pair, it is the case that $\phi$ left $K\cap \Pwc{\Cw}$ and entered $\overline{\Pwc{\Cw}}\setminus K$ passing through $(\partial K) \cap \Pwc{\Cw}$.}
		Let $t \mapsto \chi(t) \in K \cap \Pwc{\Cw}$ be such that for every $t \in [\tau^*, t^*]$ 
		\begin{align}\label{eq:chi}
			|z(t)|_{K\cap \Pwc{\Cw}} = |z(t)-\chi(t)|,
		\end{align}
		where $z(t) = \phi(t,j^*)$ for all $t \in [\tau^*, t^*]$.
		Such points exist because of the closedness of $K \cap \Pwc{\Cw}$.
		By definition of solution pairs to $\Hw$, the function $t\mapsto |z(t)|_{K\cap \Pwc{\Cw}}$ is absolutely continuous.
		Thus, for almost every $t \in  [\tau^*, t^*]$, $\frac{d}{dt}|z(t)|_{K\cap \Pwc{\Cw}}$ exists and equals to the Dini derivative of $|z(t)|_{K\cap \Pwc{\Cw}}$.
		Let $t$ be such that both $\frac{d}{dt}|z(t)|_{K\cap \Pwc{\Cw}}$ and $\dot{z}(t)$ exist.
		We have
		\begin{align*}
			&\frac{d}{dt}|z(t)|_{K\cap \Pwc{\Cw}} \\
			& \ = \liminf\limits_{h \searrow 0} \frac{|z(t)+h\dot{z}(t)|_{K\cap \Pwc{\Cw}} - |z(t)|_{K\cap \Pwc{\Cw}} }{h},
		\end{align*}
		which, by definition of $\chi(t)$ and \eqref{eq:chi}, satisfies
\IfConf{
		\begin{align*}
			&\frac{|z(t) + h\dot{z}(t)|_{K\cap \Pwc{\Cw}} - |z(t)|_{K\cap \Pwc{\Cw}}}{h} \\
			& \ \leq \frac{|z(t) \scriptsize{-}\chi(t)| \scriptsize{+}|\chi(t)\scriptsize{+}h\dot{z}(t)|_{K\cap \Pwc{\Cw}} \scriptsize{-}|z(t)|_{K\cap \Pwc{\Cw}}}{h} \\ 
			& \ = \frac{|\chi(t)+h\dot{z}(t)|_{K\cap \Pwc{\Cw}}}{h}\\
			&\ \leq \frac{|\chi(t)+h\omega|_{K\cap \Pwc{\Cw}}}{h} + |\dot{z}(t) - \omega|,
		\end{align*}
}{
		\begin{align*}
			&\!\!\!\!\!\!\!\!\! \frac{|z(t) + h\dot{z}(t)|_{K\cap \Pwc{\Cw}} - |z(t)|_{K\cap \Pwc{\Cw}}}{h} \\
			& \leq \frac{|z(t) - \chi(t)| + |\chi(t)+h\dot{z}(t)|_{K\cap \Pwc{\Cw}} - |z(t)|_{K\cap \Pwc{\Cw}}}{h} =  \frac{|\chi(t)+h\dot{z}(t)|_{K\cap \Pwc{\Cw}}}{h}\\
			&\leq \frac{|\chi(t)+h\omega|_{K\cap \Pwc{\Cw}}}{h} + |\dot{z}(t) - \omega|,
		\end{align*}
}
		for every $\omega \in T_{K\cap \Pwc{\Cw}}(\chi(t)).$
		Moreover, for every such $\omega$,
	$$\liminf\limits_{h \searrow 0} \frac{|\chi(t)+h\omega|_{K\cap \Pwc{\Cw}}}{h} = 0$$
	by definition of the tangent cone in \eqref{eq:tangent}.
	Hence, we have
\IfConf{
		\begin{align*}
			\frac{d}{dt}&|z(t)|_{K\cap \Pwc{\Cw}} \\
			& \leq \liminf\limits_{h \searrow 0} \frac{|\chi(t)+h\omega|_{K\cap \Pwc{\Cw}}}{h} + |\dot{z}(t) - \omega| \\
			& = |\dot{z}(t) - \omega|.
		\end{align*}
	}{
		\begin{align*}
			\frac{d}{dt}|z(t)|_{K\cap \Pwc{\Cw}}
			\leq \liminf\limits_{h \searrow 0} \frac{|\chi(t)+h\omega|_{K\cap \Pwc{\Cw}}}{h} + |\dot{z}(t) - \omega|
			= |\dot{z}(t) - \omega|.
		\end{align*}
}
		Thus, for almost every $t \in  [\tau^*, t^*]$,
		$$\frac{d}{dt}|z(t)|_{K\cap \Pwc{\Cw}} \leq |\dot{z}(t)|_{T_{K\cap \Pwc{\Cw}}(\chi(t))}.$$
		Since $K \cap \Pwc{\Cw}$ is closed, by definition, $\chi(t) \in K \cap \Pwc{\Cw}$ for every $t \in [\tau^*, t^*]$.
		Condition \ref{item:rFI2} implies that for almost all $t \in  [\tau^*, t^*]$, and every $w \in \wxc(\chi(t))$, we have
		\begin{align}\label{eq:dini}
			\frac{d}{dt}|z(t)|_{K\cap \Pwc{\Cw}} 
			&\leq |\dot{z}(t)|_{T_{K\cap \Pwc{\Cw}}(\chi(t))}\\
			&\leq |\dot{z}(t)|_{\Fw(\chi(t), w)}.
		\end{align}
		Since $t^* - \tau^*$ is positive and can be arbitrarily small, it is always possible to construct a neighborhood of $\chi(t)$ for every $t \in [\tau^*, t^*]$, denoted $U$, with $z(t) \in U$, and it is such that $\wxc(z(t)) \subset \wxc(\chi(t))$ by Assumption~\ref{assum:wbound}.
		Then, because of that and the fact that the mapping $x \mapsto \Fw(x, w_c)$ is locally Lipschitz uniformly in $w_c$ on $\Cw$, we can construct a neighborhood $U'$ of $z(t)$ such that $\chi(t) \in U'$ for every $t \in  [\tau^*, t^*]$ and for which there exists a constant $\lambda > 0$ satisfying
		$$\Fw(z(t), w_c) \subset \Fw(\chi(t), w_c) + \lambda|z(t)- \chi(t)|\ball$$
		for every $t \in [\tau^*, t^*]$ and every $w_c \in \wxc(z(t))$.
		Hence, for every $t \in [\tau^*, t^*]$, every $w_c \in \wxc(z(t))$, and every $\eta \in \Fw(z(t), w_c)$,
		$$|\eta|_{\Fw(\chi(t), w_c)} \leq \lambda|z(t)- \chi(t)|.$$
		Moreover, since $\dot{z}(t) \in \Fw(z(t), w_c)$, for every $w_c \in \wxc(z(t))$, together with \eqref{eq:dini} and \eqref{eq:chi}, we have that
		\begin{align*}
			\frac{d}{dt}|z(t)|_{K\cap \Pwc{\Cw}} &\leq |\dot{z}(t)|_{\Fw(\chi(t), w_c)}\\
			&\leq \lambda|z(t) - \chi(t)| = \lambda|z(t)|_{K\cap \Pwc{\Cw}}.
		\end{align*}
		Then, by the Gronwall Lemma (see \cite[Lemma A.1]{khalil.nonlinear.sys}), for every $t \in  [\tau^*, t^*]$,
		$$|z(t)|_{K \cap \Pwc{\Cw}} = 0.$$
		Since $K \cap \Pwc{\Cw}$ is closed, $\phi(t^*,j^*) \in K \cap \Pwc{\Cw}$, which contradicts the definition of $t^*$.
		Thus, there does not exist maximal solution pair $(\phi, w) \in \sol_{\Hw}(K)$ that eventually leaves $K\cap \Pwc{\Cw}$ by flowing.
	\end{enumerate}
	Thus, the set $K$ is robustly forward pre-invariant for $\Hw$.
	
	\chai{
	Following the proof of Theorem~\ref{thm:rwFI}, when $K \cap \Pwc{\Lw} \subset \Pwd{\Dw}$, with \ref{item:rwFI1} and \ref{item:rwFI2} satisfied, there exists a nontrivial solution pair $(\phi, w)$ with $\phi(0,0) = \xi$ to $\Hw$ from every $\xi \in K$.
	Then,} robust forward invariance of $K$ follows from the addition of condition \ref{item:*}.
	As shown above, every $(\phi, w) \in \sol_{\Hw}(K)$ has $\rge \phi \subset K$, thus, it suffices to show that every maximal solution pair to $\Hw$ is complete.
	We proceed by contradiction. Suppose there exists a maximal solution pair $(\phi^*, w^*) \in \sol_{\Hw}(K)$ that is not complete, and $(T,J) = \sup \dom \phi^*$.
	Because every $(\phi, w) \in \sol_{\Hw}(K)$ has $\rge \phi \subset K$, by an application of Proposition~\ref{prop:wexistence}, $(\phi^*, w^*)$ only satisfies one of the cases described in item \ref{item:b.1.2}, \ref{item:b.2}, and \ref{item:c.2}.
	In particular, condition \ref{item:*} eliminates case \ref{item:b.2} by assumption.
	Then, condition \ref{item:rFI1} and condition \ref{item:rFI2} imply that $(\phi^*, w^*)$ can be extended within $K$ by jumps and flows, respectively.
	More precisely, when $\phi^*(T,J) \in \Pwc{\Cw}$, conditions in Assumption~\ref{assum:wdata} and item \ref{item:rFI2} imply the function $\widetilde{z} : [0, \varepsilon] \rightarrow \reals^n$ as described in \ref{item:VCw} in Proposition~\ref{prop:wexistence} exists with $\widetilde{w}_c(t) = 0$ for every $t \in [0,\varepsilon]$, and such $(\widetilde{z}, \widetilde{w}_c)$ can be used to extend $(\phi^*, w^*)$ to hybrid instant $(T+ \varepsilon, J)$, which contradicts the maximality of $(\phi^*, w^*)$.\footnote{Note that the resulting disturbance will be Lebesgue measurable and locally essentially bounded on interval $I^J$.}
	When $\phi^*(T,J) \in \Pwd{\Dw}$, jumps are always possible by virtue of condition \ref{item:rFI1}.
	Therefore, the set $K$ is robustly forward invariant for $\Hw$.
\end{proof}
%

\begin{remark}
	In comparison to Theorem~\ref{thm:rwFI}, Lipschitzness of the set-valued map $\Fw$ (uniformly in $w$) is assumed.
	Together with Assumption~\ref{assum:wbound}, they are crucial to ensure that every solution pair stays in the designated set during flows.
	Note that Assumption~\ref{assum:wbound} guarantees such property uniformly in $w_c$ (see the proof of Theorem~\ref{thm:rFI} for details).
	We refer readers to the example provided below Theorem 3.1 in \cite{blanchini1999set}, which shows solutions leave a set due to the absence of locally Lipschitzness of the right-hand side of a continuous-time system.
\end{remark}

The following example shows an application of Theorem~\ref{thm:rFI}.
\begin{example}[Example~\ref{ex:oscillator} revisited]\label{ex:reoscillator}
	Consider the hybrid system in Example~\ref{ex:oscillator}.
	We apply Theorem~\ref{thm:rFI} to show the set $K_2 = \Pwc{\Cw} \cup \Pwd{\Dw}$ is robustly forward invariant for $\Hw.$
	Similar to Example~\ref{ex:oscillator}, $\Lw = \emptyset$, Assumption~\ref{assum:wdata} and condition \ref{item:*} hold for $K_2, \Fw, \Cw$ and $\Dw$.
	Moreover, Assumption~\ref{assum:wbound} holds since $w_c \leq |x|$ for every $x \in \Pwc{\Cw}$ and the map $\Fw$ is locally Lipschitz on $\Cw$ by construction. Then, condition \ref{item:rFI1} holds since for every $(x, w_d) \in (K_2 \times \Wd)\cap \Dw$, the map $\Gw$ only ``rotates" the state variable $x$ without changing $|x|$ within the unit circle centered at the origin. Condition \ref{item:rFI2} holds since
	\begin{itemize}[leftmargin = 4mm]
		\item for every $(x, w_c) \in (\partial K_2 \times \Wc)\cap \Cw$, because $0 \leq w_c\leq |x| \leq 1$ and $x_1x_2 \geq 0$, we have
		\begin{align*}
			&\inner{\nabla (x_1^2 + x_2^2)}{\Fw(x, w_c)} \\
			&\quad = 2x_1(-x_2|x_1|) + 2x_2(w_cx_1|x_1|)\\
			&\quad = 2x_1x_2 (w_c - 1)|x_1| \leq 0,
		\end{align*}
		which, applying item \ref{item:inner1} in Lemma~\ref{lem:innerTcone}, implies $\Fw(x, w_c) \in T_{K_2 \cap \Pwc{\Cw}}(x);$
		\item for every $(x, w_c) \in ((\partial (\Pwc{\Cw})\setminus \partial K_2) \times \Wc) \cap \Cw$, \IfConf{the tangent cone $T_{K_2 \cap \Pwc{\Cw}}(x)$ is given by
		$$ \begin{cases}
		\reals_{\geq 0} \times \reals & \ifeq x \in C_1, x_1 = 0, x_2 \notin \{0,1\}\\
		\reals_{\leq 0} \times \reals & \ifeq x \in C_2, x_1 = 0, x_2 \notin \{0,-1\}\\
		\reals \times \reals_{\geq 0} & \ifeq x \in C_1, x_1 \notin \{0,1\}, x_2 = 0\\
		\reals \times \reals_{\leq 0} & \ifeq x \in C_2, x_1 \notin \{0,-1\}, x_2 = 0\\
		\reals_{\geq 0}^2 \cup \reals_{\leq 0}^2 & x = 0,
		\end{cases}$$
	}{
	we have
		$$T_{K_2 \cap \Pwc{\Cw}}(x) =
		\begin{cases}
			\reals_{\geq 0} \times \reals & \ifeq x \in C_1, x_1 = 0, x_2 \notin \{0,1\}\\
			\reals_{\leq 0} \times \reals & \ifeq x \in C_2, x_1 = 0, x_2 \notin \{0,-1\}\\
			\reals \times \reals_{\geq 0} & \ifeq x \in C_1, x_1 \notin \{0,1\}, x_2 = 0\\
			\reals \times \reals_{\leq 0} & \ifeq x \in C_2, x_1 \notin \{0,-1\}, x_2 = 0\\
			\reals_{\geq 0}^2 \cup \reals_{\leq 0}^2 & x = 0,
		\end{cases}$$
}
		which, applying item \ref{item:inner1} in Lemma~\ref{lem:innerTcone}, implies $\Fw(x, w_c) \in T_{K_2 \cap \Pwc{\Cw}}(x)$ holds true by definition of $\Fw$.\footnote{We recall from Example~\ref{ex:oscillator} that $C_1 = \{x \in \reals^2: x_1 \geq 0, x_2 \geq 0, |x| \leq 1\}$ and $C_2 = \{x \in \reals^2: x_1 \leq 0, x_2 \leq 0, |x| \leq 1\}$.}
	\end{itemize}
	Thus, the set $K_2$ is robustly forward invariant for $\Hw.$
	\hfill $\triangle$
	\end{example}

\section{Nominal Forward Invariance of Sublevel Sets of Lyapunov-like Functions}
\label{sec:HwLya}
For many control problems, Lyapunov-like functions $\Ly: \reals^n \rightarrow \reals$ for $\HS$ and $\Hw$ can be obtained via analysis or numerical methods.
For such systems, we can verify the robust and nominal forward invariance of the $\level-$sublevel sets of $\Ly$ by exploiting the nonincreasing property of $\Ly$ along solutions.
In this work, for the nominal case, conditions on the system data, namely $\data$ in Theorem~\ref{thm:wfi} and Theorem~\ref{thm:fi} are explored to guarantee the forward invariance of a subset of its $\level-$sublevel set that is given by
\begin{align}\label{eq:Mr}
	\K= \Ls \cap (C \cup D).
\end{align}
We leave the more generic study of robust forward invariance properties for $\Hw$ via Lyapunov methods for Part II of this work, where the Lyapunov functions are used to select feedback laws that render robust forward invariance of their sublevel sets.

The next result introduces a set of constructive conditions that induce weak forward invariance and forward invariance for $\K$ in \eqref{eq:Mr} for $\HS$.
These conditions ensure that solutions stay within $\K$ and also guarantee existence and completeness of nontrivial solutions from every point in the set $\K$.
For convenience, given a function $\Ly$ and two constants $\level, \rU \in \reals$ with $\level \leq \rU$, we define the set $\Ir : = \{x \in \reals^n: \level \leq \Ly(x) \leq \rU \}.$
\begin{theorem}(weak forward invariance and forward invariance of $\K$)\label{thm:Ly}
	Given a hybrid system $\HS = \data$ as in \eqref{eq:H}, suppose the set $C$ is closed, the map $F : \reals^n \rightrightarrows \reals^n$ is outer semicontinuous and locally bounded, and $F (x)$ is nonempty and convex for all $x \in C$.
	Suppose there exist a constant $\rU \in \reals$ and a function $\Ly:\reals^n \rightarrow \reals$ that is continuously differentiable on an open set containing $L_{\Ly}(\rU)$ such that
	\begin{align}
		&\inner{\nabla \Ly(x)}{\eta} \leq 0 &\forall & x \in \Ir \cap C, \eta \in F(x),\label{eq:lya1}\\
		& \Ly(\eta) \leq \level & \forall & x \in \Ls \cap D, \eta \in G(x), \label{eq:lya2}
	\end{align}
	for some $\level \in (-\infty, \rU)$.
	Moreover, suppose such $\level$ satisfies
	\begin{enumerate}[label = \ref{thm:Ly}.\arabic*), leftmargin=2.8\parindent]
		\item\label{item:Ly1} for every $x \in \Ly^{-1}(\level)$, $\nabla \Ly(x) \neq 0$;
		\item\label{item:Ly2} for every $x \in (\Ls \cap \partial C)\setminus D$, $F(x) \cap T_C(x) \neq \emptyset$;
		\item\label{item:Ly3} for every $x \in (\Ly^{-1}(\level) \cap \partial C)\setminus D,$ the set $C$ is regular at $x$ and $\exists \xi \in F(x) \cap T_C(x)$, $\inner{\nabla \Ly(x)}{\xi} < 0.$
		\item\label{item:Ly4} condition \ref{item:N*} in Theorem~\ref{thm:wfi} holds for $K^\star = \K \cap C$ and $\HS$.
	\end{enumerate}
	Then, for each such $\level \in (-\infty,\rU)$ that defines a nonempty and closed $\K$, we have the following:
	\begin{itemize}
		\item The set $\K$ is weakly forward invariant for $\HS$ if
		\begin{enumerate}[label = \ref{thm:Ly}.\arabic*), resume, leftmargin=1.6\parindent]
			\item\label{item:Ly5} for every $x \in \K \cap D$, $G(x) \cap (C \cup D) \neq \emptyset$;
		\end{enumerate}
		\item The set $\K$ is forward invariant for $\HS$ if
		\begin{enumerate}[label = \ref{thm:Ly}.\arabic*), resume, leftmargin=1.6\parindent]
			\item\label{item:Ly6} $G(\K \cap D) \subset C \cup D$.
		\end{enumerate}
	\end{itemize}
\end{theorem}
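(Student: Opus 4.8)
The plan is to derive both conclusions from Theorem~\ref{thm:wfi} and Theorem~\ref{thm:fi} applied with the set to be rendered invariant taken as $K = \K$. The key simplification I would exploit first is that, since $(C \cup D) \cap C = C$, one has $\K \cap C = \Ls \cap C$, which is exactly the set entering all the tangent-cone conditions. I would then check that $(\K, C, D, F)$ satisfies Assumption~\ref{assum:data}: the inclusion $\K \subset C \cup D \subset \overline{C} \cup D$ is immediate, $\K \cap C = \Ls \cap C$ is closed (by closedness of $C$ and the standing hypothesis that $\K$ is closed), and the outer semicontinuity, local boundedness and convex-valuedness of $F$ on $\K \cap C$ are inherited from the global hypotheses on $F$. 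Thus both theorems become applicable once their remaining hypotheses are verified.

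For the jump conditions I would combine \eqref{eq:lya2} with \ref{item:Ly5} and \ref{item:Ly6}. Fixing $x \in \K \cap D \subset \Ls \cap D$, \eqref{eq:lya2} gives $\Ly(\eta) \leq \level$, i.e. $\eta \in \Ls$, for every $\eta \in G(x)$. For the weak claim, \ref{item:Ly5} produces some $\eta \in G(x) \cap (C \cup D)$, and this $\eta$ lies in $\Ls \cap (C \cup D) = \K$, establishing \ref{item:wfi1}. For the forward claim, \ref{item:Ly6} gives $G(x) \subset C \cup D$, which together with $G(x) \subset \Ls$ yields $G(x) \subset \K$, i.e. \ref{item:fi1}.

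The substantive step is the flow condition, which I would verify by a case analysis on $x \in \partial(\Ls \cap C)$. When $\Ly(x) = \level$ and $x \in \inter C$, only the sublevel constraint is active, so $T_{\Ls \cap C}(x) = \{\omega : \inner{\nabla \Ly(x)}{\omega} \leq 0\}$ (using \ref{item:Ly1} to exclude a vanishing gradient), and \eqref{eq:lya1}, valid because such an $x$ lies in $\Ir \cap C$, gives $F(x) \subset T_{\Ls \cap C}(x)$. When $\Ly(x) < \level$ but $x \in \partial C$, only $C$ is active and $T_{\Ls \cap C}(x) = T_C(x)$, so \ref{item:Ly2} supplies a vector in $F(x) \cap T_C(x)$ for the weak claim. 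The delicate case is the corner $\Ly(x) = \level$, $x \in \partial C$: here I would invoke \ref{item:Ly3}, whose regularity of $C$ together with the constraint qualification $\exists\, \xi \in F(x) \cap T_C(x)$ with $\inner{\nabla \Ly(x)}{\xi} < 0$ lets me conclude $\xi \in T_{\Ls \cap C}(x)$. For the weak claim it suffices to verify directly that a strictly descending, $C$-tangent vector stays tangent to the intersection; for the forward claim one needs the full identity $T_{\Ls \cap C}(x) = \{\omega : \inner{\nabla \Ly(x)}{\omega} \leq 0\} \cap T_C(x)$ so that \eqref{eq:lya1} applies to all of $F(x)$. This tangent-cone-of-an-intersection computation is the main obstacle, and it is precisely where regularity of $C$---not just closedness---is indispensable.

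Finally, for completeness I would observe that \ref{item:Ly2}, read contrapositively, says $(\Ls \cap \partial C) \cap L \subset D$; since $L \subset \partial C \subset C$ and $\K \cap C = \Ls \cap C$, this gives $\K \cap L \subset D$, the hypothesis both theorems need so that solutions reaching $L$ can continue by jumping. The remaining requirement---item \ref{item:N*}, excluding finite-escape-time flows (case \ref{item:b.2n})---is assumed outright in \ref{item:Ly4} for $K^\star = \K \cap C$, and since $\K \setminus D \subset \K \cap C$ this also covers the smaller $K^\star$ appearing in the weak version. For the forward claim I anticipate also needing the relaxed flow condition \ref{item:fi2'}, which via \eqref{eq:fi2''s} peels off the overlap $\partial C \cap D$, together with the observation that a flowing solution cannot exit $C$; hence escape from $\K$ by flowing can occur only across the level set $\Ly^{-1}(\level)$, where \eqref{eq:lya1} blocks it. Assembling these verifications and invoking Theorem~\ref{thm:wfi} and Theorem~\ref{thm:fi} then yields weak forward invariance under \ref{item:Ly5} and forward invariance under \ref{item:Ly6}.
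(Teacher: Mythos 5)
Your treatment of the weak forward invariance claim matches the paper's proof: verify Assumption~\ref{assum:data} for $K=\K$ using $\K\cap C=\Ls\cap C$, obtain \ref{item:wfi1} from \eqref{eq:lya2} together with \ref{item:Ly5}, and establish $F(x)\cap T_{\Ls\cap C}(x)\neq\emptyset$ by the same three-case split (level set in $\inter C$ via \ref{item:Ly1} and \eqref{eq:lya1}; $\partial C$ inside the sublevel set via \ref{item:Ly2}; the corner via \ref{item:Ly3}, regularity, and Clarke's tangent-cone-of-intersection formula), with \ref{item:Ly2} giving $\K\cap L\subset D$ and \ref{item:Ly4} supplying \ref{item:N*} (your remark that $\K\setminus D\subset\K\cap C$ covers the weak-case $K^\star$ is correct). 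That half is essentially the paper's argument.

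The forward invariance half has a genuine gap. You propose to conclude via Theorem~\ref{thm:fi} (or its variant \ref{item:fi2'}), but that theorem is not applicable under the hypotheses of Theorem~\ref{thm:Ly}, for two reasons. First, Theorem~\ref{thm:fi} assumes $F$ is locally Lipschitz on $C$, which Theorem~\ref{thm:Ly} does not assume; invoking it silently adds a hypothesis. Second, and more fundamentally, condition \ref{item:fi2} (and the first branch of \ref{item:fi2'}, which applies at every boundary point outside $\partial C\cap D$) requires the \emph{containment} $F(x)\subset T_{\K\cap C}(x)$, whereas the hypotheses here supply only a single tangent direction at boundary-of-$C$ points: at $x\in(\inter\Ls\cap\partial C)\setminus D$ item \ref{item:Ly2} gives $F(x)\cap T_C(x)\neq\emptyset$, not containment, and at corner points \ref{item:Ly3} likewise produces only one $\xi$. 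Your closing observation --- that a flowing solution cannot exit $C$ by the definition of solution, so escape from $\K$ by flowing can only occur across $\Ly^{-1}(\level)$ --- is the right idea, but it is not part of Theorem~\ref{thm:fi}'s mechanism; it is the opening of a direct argument that still has to be carried out. The paper does exactly that: it restricts $\HS$ to $\Lss$, integrates $\frac{d}{dt}\Ly(\phi(t,j))\leq 0$ along flows --- which is precisely where the assumption that \eqref{eq:lya1} holds on the whole band $\Ir\cap C$, and not merely on $\Ly^{-1}(\level)$, is essential --- to contradict any escape by flowing, uses \eqref{eq:lya2} with \ref{item:Ly6} to exclude escape by jumping, then transfers maximality of the restricted solutions back to $\HS$ and invokes Proposition~\ref{prop:existence} together with \ref{item:Ly4} for completeness. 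Saying that \eqref{eq:lya1} ``blocks'' escape at the level set is not by itself rigorous: the decrease condition must hold on a neighborhood of the level set inside $C$ for the integral comparison to go through.
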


\begin{proof}
	Fix $\level < \rU$ that satisfies the conditions in Theorem~\ref{thm:Ly}.
	The sets $K = \K, C, D$ and the map $F$ satisfy Assumption~\ref{assum:data}.
	In fact, since $\K$ is defined as the intersection of an $\level$-sublevel set of $\Ly$ and the union of the flow set and the jump set, $\K$ is a subset of $C \cup D$.
	Closedness of $\K \cap C$ follows from the fact that $C$ is closed and $\Ly$ is continuous.
	The properties of $F$ directly follow from the assumptions.
	Now, we apply Theorem~\ref{thm:wfi} to prove weak forward invariance of the set $\K$.
	
	Since set $L$ in Theorem~\ref{thm:wfi} is empty in this case, we prove that for every $x \in \partial (\K \cap C) \setminus D$,
	\begin{align}\label{eq:intersect}
		F(x) \cap T_{\Ls \cap C}(x) \neq \emptyset.
	\end{align}
	To this end, we need the following properties of the sets $C, \Ls$ and of the map $F$.
	For every $x \in \Ls$, the $\level-$sublevel set $\Ls$ is regular\footnote{
		The set $C$ is regular at $x$ provided the Bouligand tangent cone at $x$ of $C$ coincides with the Clarke tangent cone at $x$ of $C$ (see \cite[Definition 2.4.6]{clarke1990optimization}). Furthermore, every convex set is regular -- see \cite[Theorem 2.4.7 and (page 55) and Corollary 2 (page 56)]{clarke1990optimization} for other special cases of regular sets.}
	at $x$ by a direct application of \cite[Corollary 2 of Theorem 2.4.7 (page 56)]{clarke1990optimization} with $f(x) = \Ly(x) - \level$.
	Moreover, since \eqref{eq:lya1} and item \ref{item:Ly1} hold, for each $x \in \Ly^{-1}(\level)$, $F(x) \subset T_{\Ls}(x)$, and the set $\Ls$ admits a hypertangent\footnote{See \cite[Section 2.4]{clarke1990optimization}.} at every $x$ \chai{applying Lemma~\ref{lem:innerTcone}.}\footnote{Function $h(x) = \Ly(x) - \level$ is directional Lipschitz since $\Ly$ is continuously differentiable and by item (i) in \cite[Theorem 2.9.4]{clarke1990optimization}.}
	Then, we show that \eqref{eq:intersect} holds for every $x \in \partial (\K \cap C) \setminus D$ in the following cases:
	\begin{enumerate}[leftmargin= 4 mm]
		\item \textit{For every $x \in (\inter \Ls \cap \partial C)\setminus D$,}
		since $T_{\Ls \cap C}(x) = T_C(x)$, \ref{item:Ly2} implies \eqref{eq:intersect} holds;
		\item \textit{For every $x \in \Ly^{-1}(\level) \cap \inter C$,}
		we have $T_{\Ls \cap C}(x) = T_{\Ls}(x)$. This implies \eqref{eq:intersect} holds for every such $x$, because $F(x) \subset T_{\Ls}(x)$ as shown above;
		\item \textit{For every $x \in (\Ly^{-1}(\level) \cap \partial C)\setminus D$,}
		\ref{item:Ly3} implies
		\begin{align}\label{eq:int}
			T_C(x) \cap \inter T_{\Ls}(x) \neq \emptyset.
		\end{align}
		Then, since $\Ls$ and $C$ are regular at $x$, we can apply \cite[Corollary 2 of Theorem 2.9.8 (page 105)]{clarke1990optimization} with
		$C_1 = C$ and $C_2 = \Ls$ since $\Ls$ admits a hypertangent at $x$: for every $x \in (\Ly^{-1}(\level) \cap \partial C) \setminus D$, we have
		\vspace{-3mm}
		$$T_C(x) \cap T_{\Ls}(x) = T_{\Ls \cap C}(x),$$
		i.e., \eqref{eq:intersect} holds.
	\end{enumerate}
	Hence, condition \ref{item:wfi2} in Theorem~\ref{thm:wfi} holds for the sets $C, K = \K$ and the map $F$.
	
	Moreover, \eqref{eq:lya2} implies for every $x \in \K \cap D$, $G(x)\subset \Ls$.
	Together with item \ref{item:Ly5}, \eqref{eq:lya2} leads to condition \ref{item:wfi1} in Theorem~\ref{thm:wfi}.
	Then, according to Theorem~\ref{thm:wfi}, $\K$ is weakly forward invariant for $\HS$ as condition \ref{item:N*} holds by item \ref{item:Ly4}.
	
	For the remainder of the proof, we show that $\K$ is forward invariant when condition \ref{item:Ly6} holds.
	First, we prove $\K$ is forward pre-invariant for the hybrid system $\HS$.
	
	Consider the restriction to hybrid system $\HS$ to the set $\Lss$, denoted $\wHS$ and whose data is $(\wC,F,\wD,G)$, where the flow set and the jump set are given by $\wC = \Lss \cap C$ and $\wD = \Lss \cap D$, respectively.
	Note that \eqref{eq:lya2} implies for every $x \in \K \cap D$, $G(x) \subset \Ls$.
	Then, every $\phi \in \sol_{\wHS}(\K)$ has $\rge \phi \subset \Ls$ if $\phi$ cannot leave $\Ls$ by ``flowing.''
	We show by contradiction that this is the case.
	Suppose $\phi$ left $\Ls$ by ``flowing'' during the interval $I^{j^*} : = [t_{j^*}, t_{j^* + 1}]$: namely, $\phi$ left $\Ls \cap C$ and entered $(\Lss \setminus \Ls) \cap C$.
	More precisely, since $\Ls \subsetneq \Lss$, by closedness of $\K$ and item \ref{item:S1} in Definition~\ref{def:solution}, there exist hybrid time instants $(t^*,j^*), (\tau^*, j^*) \in \dom \phi$ with $\phi(t^*,j^*) \in (\Lss \setminus \Ls) \cap C$, $\phi(\tau^*,j^*) \in \Ly^{-1}(\level) \cap C$, and $\phi(t, j^*) \in (\Lss \setminus \Ls) \cap C$ for all $t \in (\tau^*, t^*]$, where $t_{j^*} < \tau^*< t^* \leq t_{j^*+ 1}$.
	Hence, we have
	\ConfSp{\ComSp}
	\begin{align}\label{eq:contra-flow}
		\Ly(\phi(\tau^*,j^*)) = \level < \Ly(\phi(t^*,j^*)) \leq \rU.
	\end{align}
	By item \ref{item:S1} in Definition~\ref{def:solution}, for every $t \in \inter I^{j^*}$, $\phi(t,j^*) \in \wC$.
	According to \eqref{eq:lya1}, $\frac{d}{dt} \Ly(\phi(t,j^*)) \leq 0$ for almost all $t \in I^{j^*}$.
	Then, integrating both sides, we have
	$$\Ly(\phi(t^*, j^*)) \leq \Ly(\phi(\tau^*, j^*)),$$
	which contradicts with \eqref{eq:contra-flow}.
	Hence, every $\phi \in \sol_{\wHS}(\K)$ stays in $\K$ during flow.
	Therefore, if $\phi$ left $\K$ and entered $\Ls \setminus \K$, which is outside of $C \cup D$ by definition of $\K$, it must have left $C \cup D$ via jumps.
	This is not possible by virtue of \ref{item:Ly6}.
	Thus, we establish the forward pre-invariance of $\K$ for $\wHS$ by Definition~\ref{def:FI}.

	Moreover, we verify that every $\phi \in \sol_{\wHS}(\K)$ with $\rge \phi \subset \K$ is also a maximal solution to $\HS$ by contradiction.
	Suppose there exists $\phi \in \sol_{\wHS}(\K)$ with $\rge \phi \subset \K$ that can be extended outside of $\K$ for $\HS$.
	More precisely, there exists $\psi \in \sol_{\HS}(\K)$, such that $\dom \psi \setminus \dom \phi \neq \emptyset$, for every $(t,j) \in \dom \phi, \psi(t,j) = \phi(t,j)$ and for every $(t,j) \in \dom \psi \setminus \dom \phi$, $\psi(t,j) \notin \K$.
	Let $(T,J) = \sup \dom \phi$. We have two cases:
	\begin{enumerate}[resume, leftmargin= 4mm]
		\item $\psi$ extends $\phi$ via flowing: namely, $\psi(T,J) = \phi(T,J) \in \K \cap C$, $t \mapsto \psi(t,J)$ is absolute continuous on $I^J$.
		By item \ref{item:S1} in Definition~\ref{def:solution}, $\psi(t,J) \in C$ for all $t \in \inter I^J$. 
		Thus, it must be the case that $\psi(t,J) \in C\setminus \Ls$ for some $t \in I^J$.
		Since $\Ls \subsetneq \Lss$, there exists $t^* \in I^J$ such that $\psi(t^*,J) \in \Lss \cap (C\setminus \Ls)$.
		This contradicts with the maximality of $\phi$ to $\wHS$.
		\item $\psi$ extends $\phi$ via jumping: namely, $\psi(T,J) = \phi(T,J) \in \K \cap D$ and $\psi(T, J+1) \notin \K$.
		By item \ref{item:S2} in Definition~\ref{def:solution}, this contradicts with the maximality of $\phi$ to $\wHS$.
	\end{enumerate}
	
	To complete the proof for forward invariance of $\K$ for $\HS$, we show that every $\phi \in \sol_{\HS}(\K)$ is also complete.
	Because condition \ref{item:Ly6} implies \ref{item:Ly5}, we know the set $\K$ defined by the chosen $\level < \rU$ is weakly forward invariant for $\HS$.
	Hence, there exists a nontrivial solution to $\HS$ from every $x \in \K$.
	Case \ref{item:b.1} Proposition~\ref{prop:existence} is excluded for every $\phi \in \sol_{\HS}(\K)$ since $\K \cap C$ is a closed set.
	Case \ref{item:b.2} is not possible for every maximal solutions from $\K$ by assumption \ref{item:Ly4}.
	Finally, $G(\K \cap D) \subset \K$ implies case \ref{item:c} in Proposition~\ref{prop:existence} does not hold. Therefore, only case \ref{item:a} is true for every maximal solution starting from $\K$.
\end{proof}

Condition \ref{thm:Ly}.3) together with \eqref{eq:lya1} result in a less restrictive requirement on the flow map $F$ when compared to the usual Lyapunov conditions for stability purposes, for instance, condition (3.2b) in \cite[Theorem 3.18]{65}, which often rely on finding a qualified positive definite function with strict decrease outside the set to stabilize.
It is not a trivial task to relax condition \ref{thm:Ly}.3) in Theorem~\ref{thm:Ly}.
When the set $\{\xi \in F(x) : \inner{\nabla \Ly(x)}{\xi} < 0\}$ is empty for some $x \in \Ly^{-1}(\level) \cap C$, we have that for every $\xi \in F(x)$, $ \inner{\nabla \Ly(x)}{\xi} = 0.$
With item \ref{item:Ly1}, it is either that $F(x) = 0$ or $F(x) \neq 0$.
If the former holds, condition \ref{item:wfi2} in Theorem~\ref{thm:wfi} holds trivially.
However, if the latter holds, it is possible to get $F(x) \cap T_{\Ls \cap C} = \emptyset$ at such $x$, which implies that only a trivial solution exists at such $x$.
The following example illustrates such a case.
\begin{example}
	Consider a system on $\reals^2$ given by
	$\dot x = F(x) : = (x_2, - x_1)$ with $C = (-\infty, -1] \times \reals$
	and pick $\Ly$ as $\Ly(x) = x^2$ with $\rU = 2$. $\K$ is nonempty and closed for $r \in [1, \rU)$.
	The conditions in Theorem~\ref{thm:Ly} except for \ref{thm:Ly}.3), which does not hold for $r = 1$. In fact, for $r = 1$, at the point $(-1,0)$, the vector $F((-1,0)) = (0,1)$ lays in $T_C((-1,0))$ and satisfies $\inner{\nabla \Ly(x)}{F(x)} = 0$ for each $x \in \Ls \cap C$, so \ref{thm:Ly}.3) does not hold. As a result $F((-1,0)) \notin T_{\Ls \cap C}((-1, 0))$.
\end{example}


\begin{remark}
	As stated in Section~\ref{sec:intro}, invariance is also a property that is key in the study of safety in dynamical systems.
	The Lyapunov-like function approach in this section resembles the idea behind the safety certificates.
	Note that the function $\Ly$ in the results in this section is not sign definite and that the aim was to assume as few properties as possible, though it should be recognized that the invariance property obtained is only for its sublevel sets.
	Connections between results in this section and their extensions to invariance-based control design is the focus of the upcoming second part of this paper.
\end{remark}

\section{Forward Invariance Analysis for a Controlled Single-Phase DC/AC Inverter System}
\label{sec:inverter}
We devote this section to present an application for the proposed forward invariance analysis tools in this work.
The system of interest is a controlled single-phase DC/AC inverter; see \cite{91} for complete design details.
As shown in Fig.~\ref{fig:inverter}, the inverter consists of a full H-bridge connected to a series RLC filter.
The dynamics of the system are
\begin{align}\label{eq:plant}
	\centering
	&\begin{bmatrix}
		\dot{i}_L \\ \dot{v}_C
	\end{bmatrix}
	=  f_q(\z): =
	\begin{bmatrix}	
		\frac{\vdc}{L} q -\frac{R}{L} i_L -\frac{1}{L} v_C\\
		\frac{1}{\Ca} i_L
	\end{bmatrix},
\end{align}
where $R,L,\Ca$ are parameters of the circuit, $\z := (i_L, v_C) \in \reals^2$,  and $q \in Q:= \{-1,0,1\}$ is a logic variable that describes the position of the switches.
\begin{figure}[H]
	\centering
	\includegraphics[width=\figsize]{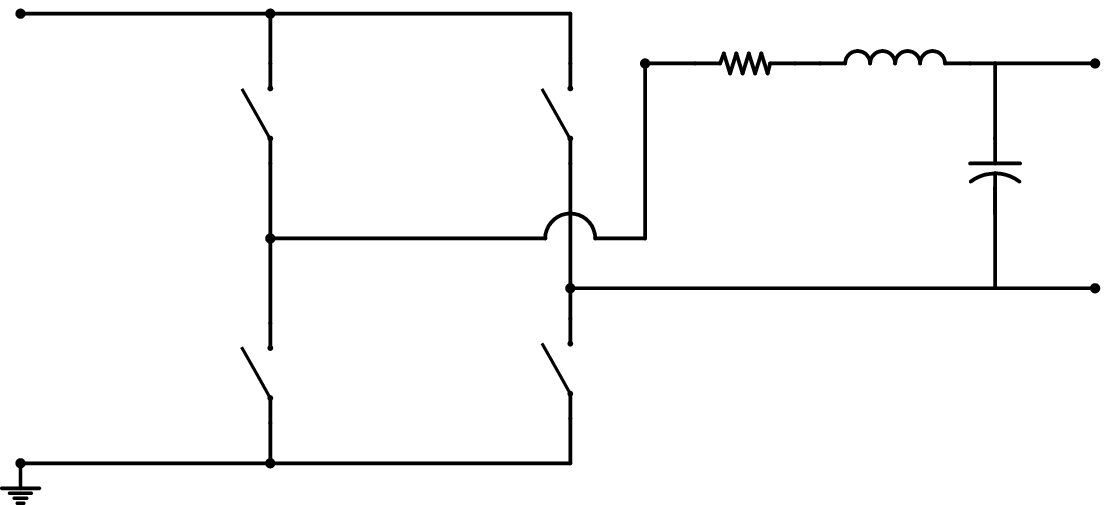}
	\small
	\put(-1.02,.24){$V_{in}$}
	\put(-1,.38){$+$}
	\put(-1,.08){$-$}
	\put(-.03,.29){$v_C$}
	\put(-.035,.35){$+$}
	\put(-.035,.22){$-$}
	\put(-.21,.33){$i_L$}
	\put(-.25,.38){\vector(1,0){0.1}}
	\put(-.35,.43){$R$}
	\put(-.21,.43){$L$}
	\put(-.2,.27){$\Ca$}
	\put(-.45,.42){$V_{in}$}
	\put(-.83,.33){$S_1$}
	\put(-.83,.1){$S_2$}
	\put(-.56,.1){$S_3$}
	\put(-.56,.33){$S_4$}
	\caption{Single-phase DC/AC inverter.}
	\label{fig:inverter}
\end{figure}
By controlling their position (``ON'' or ``OFF''), the voltage $\vin$ to the RLC filter equals to either $\vdc$ , 0 or $-\vdc$.
More precisely, we denote by $q = 1$ when $S_1=S_3= $ ON and $S_2=S_4=$ OFF; by $q = 0$ when $S_1=S_4= $ OFF and $S_2=S_3=$ ON; by $q = -1$ when $S_1=S_3= $ OFF and $S_2=S_4=$ ON.
Given system parameters $\vdc$, $L$, $\Ca$ and $R$, hybrid controller is designed to generate a sinusoidal-like output $v_C$ approximating a reference voltage $\vr$ with amplitude $b$ and frequency $\omega$ of the form $t \mapsto \vr(t) = b\sin{(\omega t + \theta)},$ where $\theta$ is the initial phase.
It can be shown that such a reference signal on the $(i_L, v_C)$ plane has to make
\begin{align}\label{eq:inverterV}
	\Ly(\z) := \left(\frac{i_L}{a}\right)^2+\left(\frac{v_C}{b}\right)^2,
\end{align}
unitary when $v_C= \vr$ and $i_L = \Ca\dot{v}_C$, where $a = \Ca\omega b$.
Let $x = (q, \z)$ and given parameters $0 < c_i < 1< c_o$, this control goal requires rendering the band around the reference trajectory given by
$${\cal T} = \{x \in Q \times \reals^2: c_i \leq \Ly(\z) \leq c_o\},$$
forward invariant for the closed-loop system.
Then, the precision of the approximation is tunable based on the two design parameters $c_i $ and $c_o$.
As $c_o - c_i \rightarrow 0$, the resulting closed-loop trajectories are ``closer'' to the reference trajectory, which make $\Ly$ in \eqref{eq:inverterV} unitary.
Using the hybrid controller proposed in \cite{91}, the closed-loop system, denoted $\HS = \data$, is in form of \eqref{eq:H} with system data given by $F(x) = \left(0, f_q(\z)\right)$ for every $x \in C : = \cal T$, and $G(x) = (G_q(\z), \z)$ for every $x \in D$, with\footnote{With $\epsilon$ as a (small enough) positive parameter,
	$M_1 = \{\z\in \reals^2: V(\z) = c_o, 0 \leq i_L \leq \epsilon, v_C \leq 0\}$ and
	$M_2 = \{\z\in \reals^2: V(\z) = c_o, -\epsilon \leq i_L \leq 0, v_C \geq 0\}.$}
	\IfConf{
		\begin{align*}
		\centering
		G_q(\z) :=
		\begin{cases}
		-1 & \ifeq q \neq -1 \mbox{ and }\\
		& ((V(\z) = c_o \text{ and } i_L \geq 0 \text{ and } \z \notin M_1)\\
		& \text{ or } (V(\z) = c_i \text{ and } i_L \leq 0));\\
		0  & \ifeq (\z \in M_1 \text{ and } i_L \neq \epsilon \text{ and } q  = 1)\\
		& \text{or }  (\z \in M_2 \text{ and } i_L \neq -\epsilon \mbox{ and }q  = -1);\\
		1  & \ifeq q \neq 1 \mbox{ and }\\
		& ((V(\z) = c_o \text{ and } i_L \leq 0 \mbox{ and } \z \notin M_2)\\
		& \text{ or } (V(\z) = c_i \text{ and } i_L \geq 0));\\
		\{0, 1\} & \ifeq (V(\z) = c_o, i_L = -\epsilon, v_C \geq 0);\\
		\{-1, 0\} & \ifeq ( V(\z) = c_o, i_L = \epsilon, v_C \leq 0);
		\end{cases}
		\end{align*}
	}{
	\begin{align*}
		G_q(\z) :=
		\begin{cases}
			-1 & \ifeq q \neq -1 \mbox{ and }((V(\z) = c_o \text{ and } i_L \geq 0 \text{ and } \z \notin M_1)\text{ or } (V(\z) = c_i \text{ and } i_L \leq 0));\\
			0  & \ifeq (\z \in M_1 \text{ and } i_L \neq \epsilon \text{ and } q  = 1) \text{ or }  (\z \in M_2 \text{ and } i_L \neq -\epsilon \mbox{ and }q  = -1);\\
			1  & \ifeq q \neq 1 \mbox{ and }((V(\z) = c_o \text{ and } i_L \leq 0 \mbox{ and } \z \notin M_2) \text{ or } (V(\z) = c_i \text{ and } i_L \geq 0));\\
			\{0, 1\} & \ifeq (V(\z) = c_o, i_L = -\epsilon, v_C \geq 0);\\
			\{-1, 0\} & \ifeq ( V(\z) = c_o, i_L = \epsilon, v_C \leq 0),
		\end{cases}
	\end{align*}
}
and the jump set $D$ given as
\begin{align*}
	\begin{array}{rlc}
		D:=
		& \{x \in Q \times \reals^2 : V(\z) = c_i, i_L q \leq 0, q\neq 0\}\\
		& \bigcup \{x \in Q \times \reals^2 : V(\z) = c_o, i_L q \geq 0, q \neq 0 \}\\
		& \bigcup\{x \in Q \times \reals^2 : V(\z) = c_i, q = 0\}.
	\end{array}
\end{align*}

With $\alpha = \frac{2}{a^2L}$ and $\beta = \frac{2}{b^2\Ca}$, we define $\Gamma = \{ z \in \reals^2: -\alpha \vdc \leq - \alpha Ri_L + (\beta - \alpha)v_C \leq \alpha \vdc \}$.
Next, applying Theorem~\ref{thm:fi}, we show the set $\cal T$ is forward invariant for $\HS$.
\begin{proposition}(\cite[Proposition 1]{91})\label{prop:1}
	Given positive system constants $R, L, \Ca, \omega, \vdc $ such that $L\Ca\omega^2 >1$, and $0 < c_i < 1 < c_o$ such that  ${\cal T} \subset Q \times \Gamma$, the closed set $\cal T$ is forward invariant for the closed-loop system $\HS = \data$.
\end{proposition}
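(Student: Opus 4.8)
The plan is to apply Theorem~\ref{thm:fi} with the target set $K$ taken equal to the flow set, $K = C = {\cal T}$. The observation that drives the whole argument is that, because $K$ coincides with $C$, a solution can never exit ${\cal T}$ while flowing: by Definition~\ref{def:solution} a flowing arc must remain in $C = {\cal T}$. Consequently condition \ref{item:fi2} is automatic here --- since $F$ is single valued and $T_{K\cap C}(x) = T_{\overline{C}}(x)$, the set $\hat C = \partial({\cal T})\setminus L$ consists exactly of the boundary points where $F(x) = (0, f_q(\z)) \in T_{\cal T}(x)$ --- so the only way to leave ${\cal T}$ is through a jump, and the substance of the proof lies in the jump condition and in completeness. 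I would first dispatch the regularity hypotheses: ${\cal T}$ is closed and compact (a closed annulus in $\z$ times the finite set $Q$), and $F(x) = (0, f_q(\z))$ is affine in $\z$, hence single valued, convex valued, locally bounded and locally Lipschitz, so Assumption~\ref{assum:data} and the Lipschitz requirement of Theorem~\ref{thm:fi} hold.

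Next I would verify \ref{item:fi1}, namely $G({\cal T}\cap D)\subset{\cal T}$. The key structural fact is that $G(x) = (G_q(\z), \z)$ leaves the continuous state $\z$ (and hence the value $\Ly(\z)$) unchanged and only relocates the logic variable within $Q$; since ${\cal T}$ is defined purely through $\Ly(\z)\in[c_i,c_o]$, every image point has the same $\Ly$-value as its preimage and therefore remains in ${\cal T}$. This gives \ref{item:fi1}, and together with the automatic \ref{item:fi2} it already yields forward pre-invariance of ${\cal T}$.

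The heart of the argument is completeness, for which Theorem~\ref{thm:fi} requires ${\cal T}\cap L\subset D$ and condition \ref{item:N*} (with $K^\star = {\cal T}\cap C = {\cal T}$). The computation underpinning both is
\begin{align}\label{eq:Vdot-inverter}
	\inner{\nabla \Ly(\z)}{f_q(\z)} = i_L\left[\alpha \vdc\, q + g(\z)\right], \qquad g(\z):= -\alpha R\, i_L + (\beta-\alpha) v_C .
\end{align}
Since $L\Ca\omega^2 > 1$ forces $\beta>\alpha$, and since ${\cal T}\subset Q\times\Gamma$ gives $|g(\z)|\le \alpha \vdc$ on ${\cal T}$, I obtain the two sign facts $\alpha\vdc + g(\z)\ge 0$ and $g(\z)-\alpha\vdc\le 0$. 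With these, \eqref{eq:Vdot-inverter} shows that for $q=1$ the derivative has the sign of $i_L$ and for $q=-1$ the sign of $-i_L$, which, via item \ref{item:inner1} of Lemma~\ref{lem:innerTcone}, locates the flow-exit boundary $L$: on $\{\Ly(\z)=c_o\}$ the field points strictly outward only when $i_L q > 0$, and on $\{\Ly(\z)=c_i\}$ only when $i_L q < 0$. These strict-exit sets are contained in the $q\neq 0$ defining pieces of $D$ (the $\{\Ly=c_o,\, i_L q\ge 0\}$ and $\{\Ly=c_i,\, i_L q\le 0\}$ sets); the $q=0$ exits on $\{\Ly=c_i\}$ are covered by $\{\Ly=c_i,\, q=0\}\subset D$, while on $\{\Ly=c_o\}$ I would use \eqref{eq:Vdot-inverter} with $q=0$ to check that $\inner{\nabla\Ly}{f_0}\le 0$ throughout the transition cells $M_1, M_2$ (where $i_L$ and $v_C$ carry opposite signs). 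This establishes ${\cal T}\cap L\subset D$, and compactness of $K^\star = {\cal T}$ supplies \ref{item:N*} through Lemma~\ref{lem:Nstar}\ref{item:Nstar1}, so Theorem~\ref{thm:fi} delivers forward invariance.

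The step I expect to be the main obstacle is the verification of ${\cal T}\cap L\subset D$: showing that the switching logic fires a jump at \emph{every} boundary configuration from which continued flow is impossible, so that no maximal solution becomes stuck at the boundary and every maximal solution is complete. The $q=\pm1$ bookkeeping is clean once the $\Gamma$ hypothesis supplies the sign bounds on $g$, but the $q=0$ cells $M_1, M_2$ and the corner jumps (to $\{0,1\}$ and $\{-1,0\}$) require a careful case analysis to confirm that the only $q=0$ points admitted on $\{\Ly=c_o\}$ are those from which $f_0$ still points inward, and that the remaining flow-impossible boundary points all lie in $D$; this matching of the hybrid controller's data to the geometry of $\partial{\cal T}$ is where the argument is most delicate.
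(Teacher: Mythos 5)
Your proposal is correct and follows essentially the same route as the paper: both apply Theorem~\ref{thm:fi} with $K = C = {\cal T}$, obtain \ref{item:fi1} from the fact that $G$ leaves $\z$ (hence $\Ly(\z)$) unchanged, and reduce the flow-side analysis to the inner product $\inner{\nabla\Ly(\z)}{f_q(\z)} = i_L\left[\alpha\vdc\,q - \alpha R i_L + (\beta-\alpha)v_C\right]$ combined with the bound furnished by ${\cal T}\subset Q\times\Gamma$ and $\beta>\alpha$. There are two small organizational differences worth recording. First, you observe that \ref{item:fi2} is tautological when $K=C$ is closed and $F$ is single valued, since $\hC$ excludes $L$ by definition, and you redirect the inner-product computation toward identifying $L$ and verifying ${\cal T}\cap L\subset D$; the paper presents the same computation as a verification of \ref{item:fi2} and relegates ${\cal T}\cap L\subset D$ to a footnote (``by construction''). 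The two versions require identical calculations because ${\cal T}\cap L\subset D$ is needed either way for completeness, so nothing is lost by your reorganization. Second, for \ref{item:N*} you invoke compactness of $K^\star={\cal T}$ via \ref{item:Nstar1}, whereas the paper uses linear growth of the flow map (affine in $\z$ for each fixed $q$); both hypotheses of Lemma~\ref{lem:Nstar} are satisfied here, and compactness of the annulus times the finite set $Q$ is arguably the cleaner choice. Finally, the ``delicate point'' you flag --- confirming that every boundary configuration from which continued flow is impossible, in particular points with $q=0$ on $\Ly^{-1}(c_o)$, actually lies in $D$ --- is precisely the step the paper leaves implicit, so your sketch is no less complete than the published argument on that score; if you write it out, that case analysis is where the remaining work lies.
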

\begin{proof}
	By observation, $\cal T$, $C, D, F$ satisfy Assumption~\ref{assum:data}, and $F$ is continuously differentiable.
	Condition \ref{item:fi1} in Theorem~\ref{thm:fi} holds since $G(x) : = (G_q(\z) \times \{\z\})  \subset (Q \times \{\z\})$ for every $x \in \cal T$.
\chai{
	Since ${\cal T} = C$, $T_{{\cal T} \cap C}(x) = T_C(x)$ for every $x \in C$.
	Hence, item \ref{item:fi2} in Theorem~\ref{thm:fi} holds if $F(x) \in T_C(x)$ for every $x \in \partial C \setminus L$.
	Note that $L \supset D$ by applying  Lemma~\ref{lem:innerTcone} and the inner product properties listed in \cite[Lemma 2]{91}.\footnote{In fact, $L \cap {\cal T} \subset D$ by construction.}
	In particular, for every $x \in D \cap \{ x : V(\z) = c_o \}$, apply item~\ref{item:inner2} in Lemma~\ref{lem:innerTcone} with $S = L_V(c_o)$, $h(x) = V(z) - c_o$; while for every $x \in D \cap \{ x : V(\z) = c_i \}$, consider $S = L_{(-V)}(-c_i)$, $h(x) = -V(z) + c_i$ and item~\ref{item:inner1} in Lemma~\ref{lem:innerTcone}.}
	Then, for every $x \in \partial {\cal T}\setminus L$, by applying Lemma~\ref{lem:innerTcone} and the inner product properties listed in \cite[Lemma 2]{91}, we have the value of $q$ chosen by the designed controller always results in one of the two cases below:
		\begin{itemize}
			\item When $x \in \{ x : V(\z) = c_o \}\setminus L$, $F(x) \in \{0\} \times T_{L_V(c_o)}(\z)$;
			\item When $x \in \{ x : V(\z) = c_i\}\setminus L$, $F(x) \in \{0\} \times \overline{\reals^2 \setminus T_{L_V(c_i)}(\z)}$.
		\end{itemize}
	Hence, $F(x) \in T_C(x)$ for every $x \in {\partial \cal T}\setminus L$.

	Finally, applying Lemma~\ref{lem:star}, item \ref{item:N*} holds since $F$ is affine linear for each assigned $q \in Q$.
\end{proof}

Numerical simulations are performed to verify the property listed in Proposition~\ref{prop:1} via MATLAB Hybrid Equations Toolbox (HyEQ); see details in \cite{74}.
The following system parameters are used:
$R = 1\Omega$, $L = 0.1H$, $\Ca = 66.6\mu F$,
$\vdc \equiv 220V$, $b = 120V$, $\omega = 120 \pi$,
and $c_i = 0.9, c_o = 1.1$.
\begin{figure}[H]
	\centering
		\setlength{\unitlength}{0.45\textwidth}
		\includegraphics[width=\unitlength]{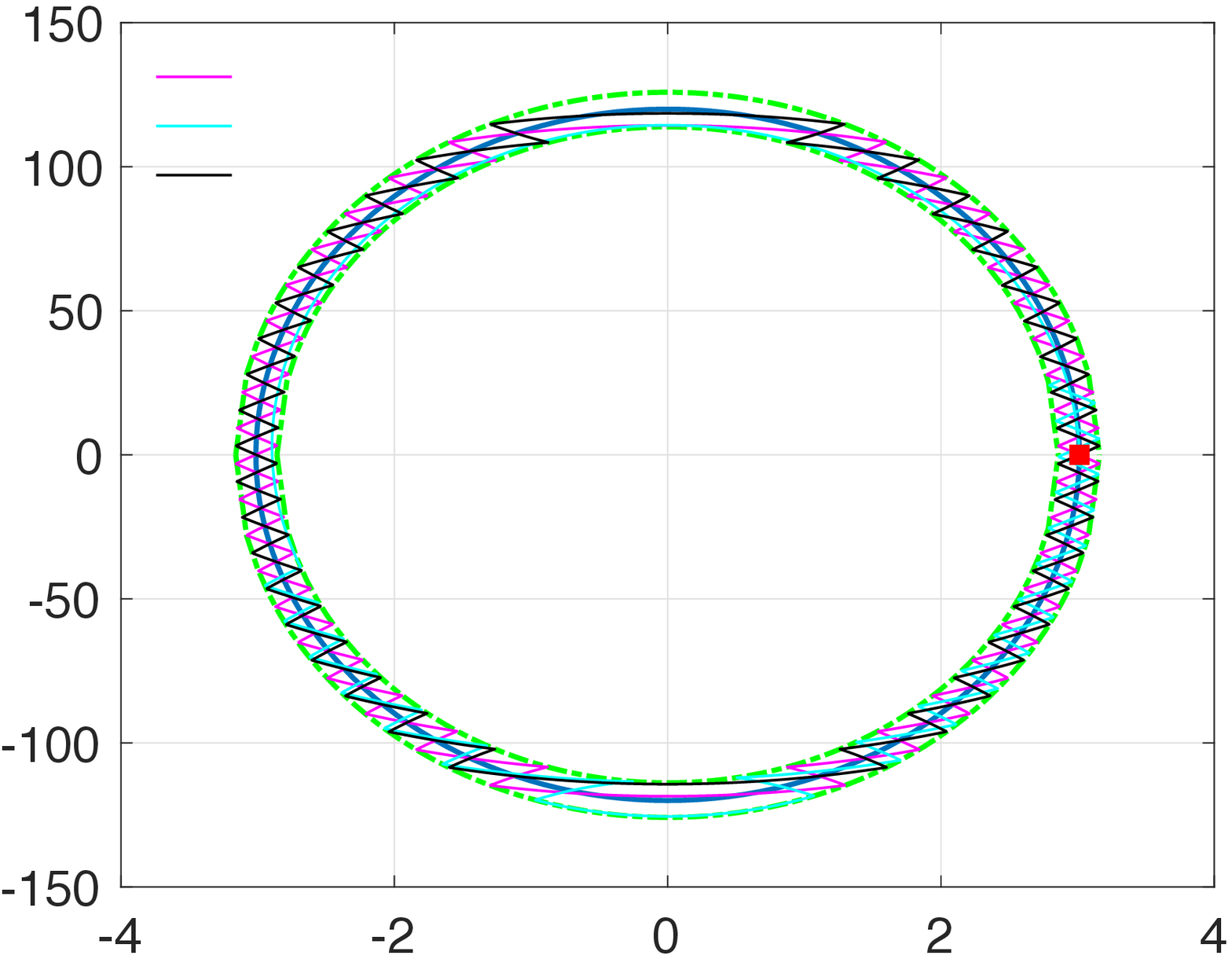}
		\put(-1.03,0.4){$v_C$}
		\put(-0.5,-0.03){$i_L$}
		\put(-0.35,0.36){$z_0$}
		\put(-0.31,0.37){\vector(1,0){0.1}}
		\put(-0.82,0.65){\tiny $q_0 = -1$}
		\put(-0.82,0.61){\tiny $q_0 = 0$}
		\put(-0.82,0.57){\tiny $q_0 = 1$}
		\caption{Simulations of $\HS$ different initial values of $q$.}
		\label{fig:simu511}
\end{figure}
Figure~\ref{fig:simu511} shows the solutions to the closed-loop system $\HS$ with $\z_0 = (b\Ca\omega, 0) =  (3.013, 0)$ and $q_0$ as either $-1,0$ or $1$.
The three solutions stay within the projection of $\cal T$ onto the $(i_L, v_C)$ plane.
\begin{figure}[H]
	\centering
		\setlength{\unitlength}{0.45\textwidth}
		\includegraphics[width=\unitlength]{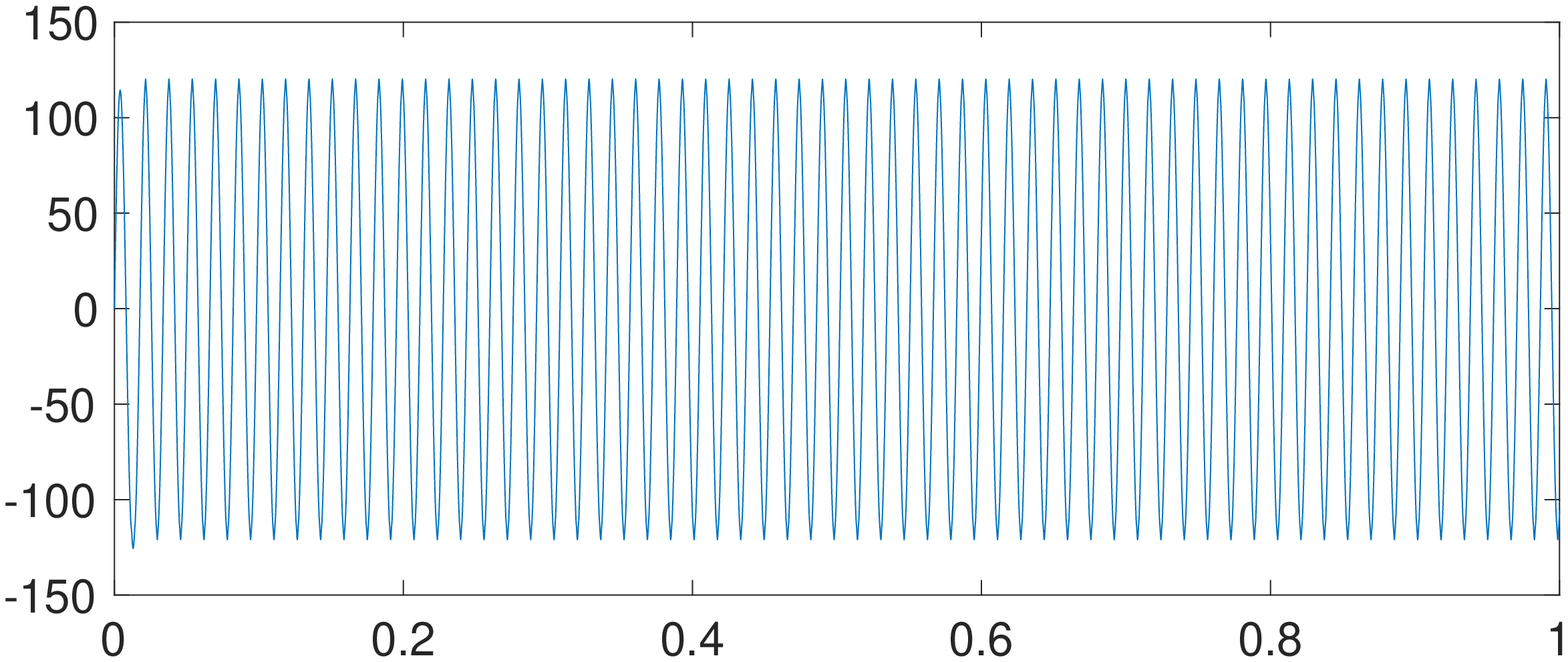}
		\put(-1.05,0.23){$v_C$}
		\put(-0.55,-0.05){time[$s$]}
		\caption{The output $v_C$ signal of $\HS$ with $q_0 = 0$.}
		\label{fig:simu_vdc}
\end{figure}
Fig.~\ref{fig:simu_vdc} shows that the output voltage $v_C$ behaves sinusoidal-like.
A Fast Fourier Transform (FFT) analysis is also performed to show that the output signal has the desired frequency; see \cite[Fig. 6a]{91}.

\section{Conclusion}
Forward invariance properties of sets that are uniform over the disturbances for hybrid inclusions with disturbance inputs are studied in this paper.
When a set $K$ enjoys such a property, solution pairs evolve within the set they started from, even under the effect of disturbances.
We formally define robust forward invariance of sets for hybrid systems $\Hw$ modeled using differential and difference inclusions with state and disturbance constraints.
Among the four notions, two of them are considered stronger than the other two in the sense that all maximal solution pairs that start from the set of interest stay in it.

Sufficient conditions for each notion are presented in terms of the data of the hybrid system and require checking conditions involving its discrete and continuous dynamics within $K$.
In particular, when starting from an intersection involving $K$ and the jump set, the jump map ought to map the state back to $K$ to allow solutions evolving within $K$ during jumps; while when starting from an intersection involving $K$ and the flow set, the flow map needs to have vectors pointing inward of $K$ to allow solutions evolving within it during flows.
To guarantee the robust invariance notions, the flow map is required to enjoy a locally Lipschitz property on $\Cw$ to avoid solutions from leaving $K$.
Such a property is also ensured by Assumption~\ref{assum:wbound}--a mild assumption on the inputs allowed by $\Cw$ at points that are at the boundary of $K$ and in $\Cw$.

To achieve the notions that require completeness of maximal solution pairs, a general result for $\Hw$ that characterizes all possible ending behaviors of maximal solution pairs is presented.
The existence of nontrivial solution pairs from every point in $K$ is ensured by the assumption of zero disturbances being admissible during flows.
In addition,  condition \textit{$\star$)} is introduced to exclude the case of solution pairs escaping to infinity in finite time, where two solution-independent conditions are proposed to verify \textit{$\star$)} for $\Hw$.
The results on robust invariance are specialized to the nominal case, i.e., $\HS$.

Finally, one result to render sublevel sets of Lyapunov-like functions nominal forward invariant for $\HS$ is provided.
In particular, properties of the tangent cone to the sublevel of the Lyapunov-like function is exploited to derive mild conditions for invariance of the sublevel set.
These properties hold for free when the flow set is convex (locally) or the Lyapunov-like functions strictly decrease at the boundary of the set $K$.

In a second part of this work, which is being prepared, results on existence of invariance inducing state-feedback laws for $\Hw$ using robust control Lyapunov functions for forward invariance will be provided.
In particular, controller synthesis that constructs state-feedback laws using a pointwise minimum norm selection scheme are under development.
Future research directions include studying optimality properties of feedback laws via inverse optimality analysis and the development of barrier certificates for hybrid systems $\HS$ and $\Hw$.
	
	\bibliographystyle{unsrt}
	\bibliography{bibs/CJref,bibs/RGSweb,bibs/CJintro}
	
	\appendix \label{sec:app}
	\subsection{Auxiliary Definitions and Results for Hybrid Systems}
\begin{definition}(solutions to $\HS$, \cite[Definition 2.6]{65})\label{def:solution}
	A hybrid arc $\phi$ is a solution to the hybrid system $(C, F, D, G)$ if $\phi(0,0) \in \overline{C} \cup D$, and
	\begin{enumerate}[label = (S\arabic*), leftmargin=2.3\parindent]
		\item\label{item:S1} for all $j \in \mathbb{N}$ such that $I^j$ has nonempty interior
		\begin{align*}
		&\phi(t,j) \in C &\mbox{for all } &\quad t \in \inter I^j,\\
		&\frac{d\phi}{dt}(t,j) \in F(\phi(t,j)) &\mbox{for almost all } &\quad t \in I^j,
		\end{align*}
		\item\label{item:S2} for all $(t,j) \in \dom\phi$ such that $(t, j+1) \in \dom\phi$,
		$$ \phi(t,j) \in D \qquad \phi(t,j+1) \in G(\phi(t,j)).$$
	\end{enumerate}
\end{definition}
\begin{proposition}(\cite[Proposition 2.2]{120})\label{prop:existence}
	Consider the hybrid system $\HS = (C,F,D,G)$. Let $\xi \in \overline{C} \cup D$. If $ \xi \in D$ or
	\begin{enumerate}[label = (VC), leftmargin= 8mm]
		\item\label{item:VC} there exist $ \varepsilon > 0$ and an absolutely continuous function $z: [0,\varepsilon] \rightarrow \reals^n$ such that $ z(0) = \xi, \dot{z}(t) \in F(z(t))$ for almost all $t \in [0,\varepsilon]$ and $z(t)\in C$ for all $t \in (0,\varepsilon]$,
	\end{enumerate}
	then there exists a nontrivial solution $\phi$ to $\HS$ with $\phi(0,0) = \xi$. If \ref{item:VC} holds for every $ \xi \in \overline{C} \setminus D$, then there exists a nontrivial solution to $\HS$ from every point of $ \overline{C} \cup D,$ and every $\phi \in \mathcal{S}_{\HS}$ satisfies exactly one of the following:
	\begin{enumerate}[label = \alph*), leftmargin= 4mm]
		\item $\phi$ is complete;
		\item $\phi$ is not complete and ``ends with flow'', with $(T,J) = \sup\dom\phi$, the interval $I^J$ has nonempty interior; and either
		\begin{enumerate}[label = b.\arabic*)]
			\item $I^J$ is closed, in which case $\phi(T,J) \in \overline{C}\setminus(C \cup D)$; or
			\item\label{item:b.2n} $I^J$ is open to the right, in which case $(T,J) \notin \dom\phi$, and there does not exist an absolutely continuous function $z : \overline{I^J} \rightarrow \reals^n$ satisfying $ \dot{z}(t) \in  F(z(t))$ for almost all $t \in I^J, z(t) \in C$ for all $t \in \mbox{\normalfont int }I^J$, and such that $z(t) = \phi(t,J)$ for all $t \in I^J $;
		\end{enumerate}
		\item\label{item:cn} $\phi$ is not complete and ``ends with jump'': for  $(T,J) = \sup\dom\phi$, one has $\phi(T,J) \notin \overline{C} \cup D$.
	\end{enumerate}
	Furthermore, if $G(D) \subset \overline{C} \cup D,$ then \ref{item:cn} above does not occur.
\end{proposition}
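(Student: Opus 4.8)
The statement is exactly the zero-disturbance specialization of Proposition~\ref{prop:wexistence}: taking the disturbance spaces $\Wc$ and $\Wd$ to be singletons, one has $\Pwc{\Cw}=C$, $\Pwd{\Dw}=D$, $\overline{\Pwc{\Cw}}=\overline{C}$, condition \ref{item:VCw} reduces to \ref{item:VC}, and solution pairs $(\phi,w)$ collapse to solutions $\phi$ of $\HS$ in the sense of Definition~\ref{def:solution}. The plan is therefore to run the same two-part argument as in the proof of Proposition~\ref{prop:wexistence}, specialized to this setting, pointing out the single place where the nominal conclusion is sharper. I would either invoke Proposition~\ref{prop:wexistence} directly under this identification, or reproduce the constructions below for self-containedness.

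First I would establish existence of a nontrivial solution from a fixed $\xi\in\overline{C}\cup D$ by two explicit constructions. If $\xi\in D$, then $\dom G\supset D$ supplies some $b\in G(\xi)$, and the arc $\phi$ on $\{(0,0),(0,1)\}$ with $\phi(0,0)=\xi$ and $\phi(0,1)=b$ satisfies \ref{item:S2}, hence is a nontrivial solution. If instead \ref{item:VC} holds, then $\phi(t,0):=z(t)$ on $[0,\varepsilon]\times\{0\}$ satisfies \ref{item:S1}, hence is nontrivial. Under the blanket hypothesis that \ref{item:VC} holds at every $\xi\in\overline{C}\setminus D$, every point of $\overline{C}\cup D$ falls into at least one of these two cases, which yields a nontrivial solution from every such point.

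Next I would classify an arbitrary maximal $\phi$ by setting $(T,J)=\sup\dom\phi$ and splitting on completeness and on whether $(T,J)\in\dom\phi$. If $\dom\phi$ is unbounded we are in case a). Otherwise: if $(T,J)\notin\dom\phi$ then $I^J$ is open to the right and maximality forbids an absolutely continuous extension $z$ with $z=\phi(\cdot,J)$ on $I^J$, which is case \ref{item:b.2n}; if $(T,J)\in\dom\phi$ and $I^J$ is a singleton then $\phi$ ends with a jump, and maximality forces $\phi(T,J)\notin\overline{C}\cup D$ (were it in $\overline{C}\setminus D$, \ref{item:VC} would extend $\phi$ by starting a new flow interval, and were it in $D$ the jump construction above would extend it), which is case \ref{item:cn}; and if $(T,J)\in\dom\phi$ with $I^J$ of nonempty interior then $\phi$ ends with flow on the closed interval $I^J$, so $\phi(T,J)\in\overline{C}$ by continuity, and maximality again forces $\phi(T,J)\notin C\cup D$, giving $\phi(T,J)\in\overline{C}\setminus(C\cup D)$, which is case b.1). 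Mutual exclusivity is immediate from this trichotomy. Finally, the ``furthermore'' assertion follows because if $G(D)\subset\overline{C}\cup D$ and $\phi$ ended with a jump, then $\phi(T,J)\in G(\phi(T,J-1))\subset G(D)\subset\overline{C}\cup D$, contradicting $\phi(T,J)\notin\overline{C}\cup D$, so case \ref{item:cn} cannot occur.

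I expect the main obstacle to be the flow-endpoint step, where one must rule out extension of a maximal solution whose last flow terminates at $\phi(T,J)$. The delicate point is an asymmetry in \ref{item:S1}: starting a flow at a point $\xi\in\overline{C}\setminus C$ is admissible, since \ref{item:S1} constrains only the interior of a flow interval and not its left endpoint, whereas continuing an ongoing flow through $\phi(T,J)$ would make it an interior point of the enlarged flow interval and thus force $\phi(T,J)\in C$. This asymmetry is precisely why, in case \ref{item:cn}, a jump landing in $\overline{C}\setminus C$ can still be extended, while in case b.1) a flow terminating at $\phi(T,J)\in\overline{C}\setminus(C\cup D)$ genuinely cannot; and it explains why the blanket \ref{item:VC} hypothesis collapses the two subcases \ref{item:b.1.1}--\ref{item:b.1.2} of Proposition~\ref{prop:wexistence} into the single nominal condition $\phi(T,J)\in\overline{C}\setminus(C\cup D)$, since no point of $C\setminus D$ can be such a terminal point.
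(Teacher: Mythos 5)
Your proposal is correct and takes essentially the same route as the paper: the paper itself gives this statement by citation to \cite[Proposition 2.2]{120}, and its proof of the disturbance analog, Proposition~\ref{prop:wexistence}, has exactly your two-part structure (explicit jump and flow constructions for existence, followed by a classification of maximal solutions according to whether $(T,J)\in\dom\phi$ and whether $I^J$ is a singleton or has nonempty interior, with maximality ruling out extensions). Your observation that \ref{item:S1} constrains only the interior of a flow interval, so that under the blanket \ref{item:VC} hypothesis the subcases \ref{item:b.1.1}--\ref{item:b.1.2} of Proposition~\ref{prop:wexistence} collapse to the single nominal condition $\phi(T,J)\in\overline{C}\setminus(C\cup D)$, is precisely the point where the nominal conclusion sharpens, and you handle it correctly.
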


\subsection{Auxiliary Definitions and Results for Set-valued Maps}


\begin{definition}(outer semicontinuity of set-valued maps)\label{def:osc}
	A set-valued map $S: \reals^n \rightrightarrows \reals^m$ is outer semicontinuous at $x \in \reals^n$ if for each sequence $\{x_i\}^{\infty}_{i=1}$ converging to a point $x \in \reals^n$ and each sequence $y_i \in S(x_i)$ converging to a point $y$, it holds that $y \in S(x)$; see  \cite[Definition 5.4]{rockafellar2009variational}. Given a set $K \subset \reals^n$, it is outer semicontinuous relative to $K$ if the set-valued mapping from $\reals^n$ to $\reals^m$ defined by $S(x)$ for $x\in K$ and $\emptyset$ for $x \notin K$ is outer semicontinuous at each $x\in K$.
\end{definition}

	\begin{definition}(Lipschitz continuity of set-valued maps)\label{def:lip}
		Given a set-valued map $F: \reals^n \times \Wc \rightrightarrows \reals^n$, the mapping $x \mapsto F(x, w)$ is locally Lipschitz uniformly in $w$ at $x$, if there exists a neighborhood $U$ of $x$ and a constant $\lambda \geq 0$ such that for every $\xi \in U$
		\begin{align*}
			F(x, w) \subset F(\xi, w) + &\lambda |x - \xi|\ball\\
			&\forall w \in \{w \in \Wc : (U \times \Wc) \cap \dom F\}.
		\end{align*}
		Furthermore, $x \mapsto F(x, w)$ is locally Lipschitz uniformly in $w$ on set $K \subset \dom F$ when it is locally Lipschitz uniformly in $w$ at each $x \in \Pwc{K}$.
	\end{definition}

\chai{
\begin{lemma}(\cite[Theorem 2.9.10]{clarke1990optimization})\label{lem:innerTcone}
	Given a set $S : = \{x: h(x) \leq 0\}$,
	suppose that, for every $x \in \{x: h(x) = 0\}$, $h$ is directionally Lipschitz at $x$ with $0 \notin \nabla h(x) \neq \emptyset$ and the collection of vectors $Y : = \{y : \inner{\nabla h(x)}{y} <\infty \}$ is nonempty.
	Then, the set $S$ admits a hypertangent at $x$ and
	\begin{enumerate}[leftmargin = 4mm, label = \arabic*)]
		\item\label{item:inner1} $y \in T_S(x) \ \ \ifeq \inner{\nabla h(x)}{y} \leq 0$;
		\item\label{item:inner2} $\exists y \in \inter T_S(x)\cap \inter Y$
		such that  $\inner{\nabla h(x)}{y} < 0$.
	\end{enumerate}
\end{lemma}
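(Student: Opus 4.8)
The plan is to prove this through Clarke's calculus of tangent cones and generalized gradients for directionally Lipschitz functions, following \cite[Theorem 2.9.10]{clarke1990optimization}. Throughout, I would read $\inner{\nabla h(x)}{y}$ as the generalized directional derivative $h^\circ(x;y) = \max_{\zeta \in \nabla h(x)} \inner{\zeta}{y}$, i.e. the support function of the generalized gradient $\nabla h(x)$, which collapses to the ordinary inner product when $h$ is continuously differentiable (the case actually used in the paper). The set $Y = \{y : \inner{\nabla h(x)}{y} < \infty\}$ is then the effective domain of $y \mapsto h^\circ(x;y)$, and its interior $\inter Y$ is the cone of directions along which $h$ is directionally Lipschitz in the uniform sense.

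First I would establish the hypertangent claim. The hypertangent cone $H_S(x)$ coincides with the interior of the Clarke tangent cone whenever it is nonempty, so it suffices to exhibit a single direction $v \in \inter Y$ with $h^\circ(x;v) < 0$. Since $\nabla h(x)$ is nonempty, compact, and convex (by the directional Lipschitz hypothesis with $Y \neq \emptyset$) and $0 \notin \nabla h(x)$, a separating-hyperplane argument produces $v$ with $\inner{\zeta}{v} < 0$ for every $\zeta \in \nabla h(x)$, that is $h^\circ(x;v) < 0$, and $v$ can be taken in $\inter Y$. I expect this to be the main obstacle: one must perform the separation inside the cone $Y$ and then pass from the infinitesimal inequality $h^\circ(x;v) < 0$ to the full hypertangency requirement (that $x' + tw \in S$ for all $x' \in S$ near $x$, all $w$ near $v$, and all small $t > 0$), which is precisely where the directional Lipschitz regularity is used to convert the derivative inequality into a uniform set-membership statement.

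Next I would characterize the tangent cone. With a hypertangent direction in hand, the sublevel-set version of Clarke's constraint-qualification theorem (\cite[Theorem 2.4.7]{clarke1990optimization} in the directionally Lipschitz setting) applies: under $0 \notin \nabla h(x)$ the set $S = \{x : h(x) \leq 0\}$ is regular at every boundary point $x$, and its Clarke tangent cone is $\{y : h^\circ(x;y) \leq 0\}$. Regularity means the Bouligand tangent cone $T_S(x)$ equals the Clarke tangent cone, so $T_S(x) = \{y : h^\circ(x;y) \leq 0\}$, which is exactly item~\ref{item:inner1} once $h^\circ(x;y)$ is rewritten as $\inner{\nabla h(x)}{y}$.

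Finally, for item~\ref{item:inner2} I would use that $\inter T_S(x)$ equals the hypertangent cone $\{y : h^\circ(x;y) < 0\}$. The direction $v$ built in the first step satisfies $h^\circ(x;v) < 0$ and lies in $\inter Y$, hence $v \in \inter T_S(x) \cap \inter Y$ with $\inner{\nabla h(x)}{v} < 0$, which is the required conclusion. In the continuously differentiable case one may bypass the separation argument entirely and take $v = -\nabla h(x)$ explicitly, for which $\inner{\nabla h(x)}{v} = -|\nabla h(x)|^2 < 0$ and $\inter Y = \reals^n$, so both items follow at once.
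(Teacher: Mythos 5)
The paper offers no proof of this lemma: it is imported verbatim from Clarke's book as Theorem 2.9.10, so the only available comparison is with Clarke's own argument, which is precisely the route you reconstruct (separation to produce a direction with negative generalized directional derivative, the sublevel-set tangent-cone theorem, and the identity between the hypertangent cone and the interior of the Clarke tangent cone). Your reconstruction is essentially sound, including the correct reading of $\inner{\nabla h(x)}{y}$ as the support function $h^\circ(x;y)$ of the generalized gradient, and the observation that in the continuously differentiable case actually used in the paper one may take $y=-\nabla h(x)$ outright. Two caveats are worth fixing. First, for a merely directionally Lipschitz $h$ the generalized gradient need not be compact (compactness characterizes the locally Lipschitz case); your separation step survives because strict separation of the point $0$ from a nonempty \emph{closed convex} set does not require boundedness, but you should not invoke compactness. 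Second, your claim that $S$ is regular at $x$ and hence that $T_S(x)$ \emph{equals} $\{y : h^\circ(x;y)\leq 0\}$ overshoots the hypotheses: equality in Clarke's Theorem 2.4.7 (and its directionally Lipschitz analogue) requires $h$ itself to be regular at $x$, which is not assumed here, so only the inclusion $\{y : h^\circ(x;y)\leq 0\}\subset T_S(x)$ is available. Since item 1) of the lemma asserts only the ``if'' direction, and the Clarke tangent cone is contained in the Bouligand cone \eqref{eq:tangent} used throughout the paper, nothing in the lemma's conclusions is lost, but the stronger regularity statement should be dropped or explicitly conditioned on regularity of $h$.
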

}

\end{document}